 \numberwithin{equation}{section}
\newcommand{\abs}[1]{\lvert#1\rvert}
\newcommand{\argmin}[1]{\mathop{\rm argmin}\limits_{#1}}
\newcommand{\A}{{\mathcal A}}
\newcommand{\R}{{\mathbb R}}
\newcommand{\cA}{{\mathcal A}}
\newcommand{\vA}{{\boldsymbol A}}
\newcommand{\vx}{{\boldsymbol x}}
\newcommand{\vy}{{\boldsymbol y}}
\newcommand{\vu}{{\boldsymbol u}}
\newcommand{\vg}{{\boldsymbol g}}
\newcommand{\vv}{{\boldsymbol v}}
\newcommand{\vz}{{\boldsymbol z}}
\newcommand{\vb}{{\boldsymbol b}}
\newcommand{\ve}{{\boldsymbol \eta}}
\newcommand{\vZ}{{\boldsymbol Z}}
\newcommand{\vX}{{\boldsymbol X}}
\newcommand{\va}{{\boldsymbol a}}
\newcommand{\vw}{{\boldsymbol w}}
\newcommand{\vH}{{\boldsymbol H}}
\newcommand{\innerp}[1]{\langle #1 \rangle}
\newcommand{\F}{{\mathbb F}}
\newcommand{\rank}{{\rm rank}}
\renewcommand{\omega}{\ve}
\newcommand{\RNum}[1]{\uppercase\expandafter{\romannumeral #1\relax}}
\newtheorem{definition}{Definition}[section]
\newtheorem{theorem}[definition]{Theorem}
\newtheorem{lemma}[definition]{Lemma}
\newtheorem{remark}[definition]{Remark}
\date{}
\begin{document}
\baselineskip 18pt
\bibliographystyle{plain}
\title{The performance of the amplitude-based model for complex phase retrieval}
\author{Yu Xia}
\address{School of Mathematics,
Hangzhou Normal University, Hangzhou 311121, China}
 \email{ yxia@hznu.edu.cn}
\author{Zhiqiang Xu}
\thanks{Yu Xia is supported by NSFC grant (12271133, U21A20426,11901143), Zhiqiang Xu is supported  by
the National Science Fund for Distinguished Young Scholars (12025108) and NSFC (12021001, 12288201). }
\address{LSEC, ICMSEC, Academy of Mathematics and Systems Science, Chinese Academy of Sciences, Beijing 100190, China;\newline
School of Mathematical Sciences, University of Chinese Academy of Sciences, Beijing 100049, China}
\email{xuzq@lsec.cc.ac.cn}
\begin{abstract}
The paper aims to study the performance of the amplitude-based model \newline $\widehat{\vx} \in \argmin{\vx\in \mathbb{C}^d}\sum_{j=1}^m\left(|\langle \va_j,\vx\rangle|-b_j\right)^2$, where $b_j:=|\langle \va_j,\vx_0\rangle|+\eta_j$ and $\vx_0\in \mathbb{C}^d$ is a target signal. The model is raised in phase retrieval as well as in absolute value rectification neural networks. Many efficient algorithms have been developed to solve it in the past decades.  {However, there are very few results available regarding the estimation performance in the complex case under noisy conditions.}  In this paper,  {we present a theoretical guarantee on the amplitude-based model for the noisy complex phase retrieval problem}. Specifically, we show that $\min_{\theta\in[0,2\pi)}\|\widehat{\vx}-\exp(\mathrm{i}\theta)\cdot\vx_0\|_2 \lesssim \frac{\|\ve\|_2}{\sqrt{m}}$ holds with high probability provided   the measurement vectors $\va_j\in \mathbb{C}^d,$ $j=1,\ldots,m,$ are {i.i.d.} complex sub-Gaussian random vectors and
  $m\gtrsim d$. Here $\ve=(\eta_1,\ldots,\eta_m)\in \mathbb{R}^m$ is the noise vector without any assumption on the distribution.    Furthermore, we prove that the reconstruction error is sharp. For the case where the target signal $\vx_0\in \mathbb{C}^{d}$ is sparse, we establish a similar result for the nonlinear constrained $\ell_1$ minimization model.    { To accomplish this, we leverage a strong version of restricted isometry property for an operator on the space of simultaneous low-rank and sparse matrices.}
\end{abstract}
\maketitle

\section{Introduction}

\subsection{Problem setup}
The aim of phase retrieval is to recover a  target signal from phaseless measurements.  {This problem arises in various applications, such as X-ray crystallography \cite{app1}, quantum mechanics \cite{app2}, optics \cite{Fienup1}, astronomy \cite{RP}, and many other fields. For a comprehensive understanding of recent reconstruction algorithms and the latest theoretical advancements, please refer to \cite{Unser23}.}  In this paper, we study the recovery of a signal $\vx_0\in \mathbb{F}^{d}$ $(\mathbb{F}\in \{\mathbb{R},\mathbb{C}\})$  from noise corrupted amplitude-based measurements $b_j$,  $j=1,\ldots,m$, that is,
\begin{equation}\label{eqn: amplitude_observation}
b_j:=|\langle \va_j,\vx_0\rangle|+\eta_j,\qquad j=1,\ldots,m,
\end{equation}
where $\va_j\in \mathbb{F}^n$ are the measurement vectors and $\ve:=(\eta_1,\ldots,\eta_m)^\top\in \mathbb{R}^m$ is a noise vector.
In the noiseless case, i.e. $\ve=0$, it was shown that $m\geq 2d-1$  (resp. $m\geq 4d-4$) generic measurements are sufficient to exactly recover $\vx_0\in \F^d$ up to a unimodular constant  for $\mathbb{F}=\mathbb{R}$  (resp. $\mathbb{F}=\mathbb{C}$)   \cite{generic1,generic2,generic3}.

In the noisy case, an intuitive method to estimate $\vx_0$ from the amplitude-based observations (\ref{eqn: amplitude_observation}) is to solve  the following nonlinear least square model:
 \begin{equation}\label{eqn: nonsparse_model}
 \argmin{\vx\in \mathbb{F}^d}\,\, \sum_{j=1}^m\left(|\langle \va_j,\vx\rangle|-b_j\right)^2.
 \end{equation}
Apart from phase retrieval, the model (\ref{eqn: nonsparse_model}) has also been studied in the context of shallow neural networks with absolute value rectification (see \cite{ReLU1, ReLU2}).

{{For the case where the target signal  $\vx_0$ is sparse, we can employ the following $\ell_1$-minimization model to recover $\vx_0$:
 \begin{equation}\label{eqn: sparse_model}
 \argmin{\vx\in \mathbb{F}^d} \|\vx\|_1\qquad \text{s.t.} \qquad \left(\sum_{j=1}^m\left(|\langle \va_j,\vx\rangle|-b_j\right)^2\right)^{1/2}\leq \epsilon,
 \end{equation}
 where $\epsilon$ is a parameter related to the noise level such that $\|\boldsymbol{\eta}\|_2\leq \epsilon$.}} {In practice, the noise level is determined by the system itself.   For example, assume that $\eta_i$, $i=1,\ldots, m$, are independently drawn from Gaussian distribution $\mathcal{N}(0,\sigma^2)$. Then, for $t\geq 0$, we have
\[
\mathbb{P}\{\|\boldsymbol{\eta}\|_2^2\geq (m+2\sqrt{m}\sqrt{t}+2t)\sigma^2\}\leq \exp(-t),
\]
which is derived from Chi-square concentration inequality \cite{LM}. Therefore, by choosing  $$\epsilon=\sqrt{m+2\sqrt{m\log m}+2\log m}\sigma,$$  the target signal $\vx_0$ satisfies the constrain condition
in (\ref{eqn: sparse_model})
 with a probability of at least $1-1/m$.   The estimation of the parameter $\sigma$ can be viewed as a moment estimation problem, as discussed in \cite{noise}. }

 Naturally, one is interested in assessing the quality of the recovery of $\vx_0$ by solving (\ref{eqn: nonsparse_model}) and (\ref{eqn: sparse_model}).
 For the case  where $\mathbb{F}=\mathbb{R}$,  the performances of (\ref{eqn: nonsparse_model}) and (\ref{eqn: sparse_model}) were studied in \cite{Real_nonlinear_LS} and \cite{sparse2}, respectively. It was shown that if $\va_j,$ $j=1,\ldots,m,$ are independent real Gaussian random vectors, the solution $\widehat{\vx}$ to (\ref{eqn: nonsparse_model})  satisfies $\min\{\|\widehat{\vx}-\vx_0\|_2, \|\widehat{\vx}+\vx_0\|_2\}\lesssim\|\ve\|_2/\sqrt{m}$ with high probability provided that $m \gtrsim d$.  
{{One also shows that the reconstruction error is sharp \cite{Real_nonlinear_LS}, that is, $\min\{\|\widehat{\vx}-\vx_0\|_2, \|\widehat{\vx}+\vx_0\|_2\}\gtrsim\|\ve\|_2/\sqrt{m}$ when $|\sum_{i=1}^m\eta_i|\geq \delta_0$ and $\|\ve\|_2/\sqrt{m}\leq \delta_1$ for some positive constants $\delta_0$ and $\delta_1$.}}
 A similar result is  obtained for  (\ref{eqn: sparse_model}) provided $m \gtrsim k \log (ed/k)$, where $k:=\|\vx_0\|_0$  \cite{sparse2}.
  However, the methods employed in \cite{Real_nonlinear_LS, sparse2} heavily depend on $\vx_0$ being real. More specifically, the modulo phase in the real case is only  $\pm1$-sign, but, in the complex case, it becomes the complex unit circle, which is an uncountable set.
   This difference makes it highly nontrivial to extend the results from the real case to the complex case. This paper overcomes this challenge by introducing new technical tools.
   Additionally, we also evaluate the performance of the amplitude-based models for complex signals.  Particularly, for any solution $\widehat{\vx}\in \mathbb{C}^{d}$ to (\ref{eqn: nonsparse_model}), it can be shown that
   \begin{equation}\label{eqn: error_complex_main}
 \min_{\theta\in[0,2\pi)}\|\widehat{\vx}-\exp(\mathrm{i}\theta)\vx_0\|_2\lesssim \frac{\|\ve\|_2}{\sqrt{m}},
 \end{equation}
 provided that $m\gtrsim d$.
  { The solutions to  the model (\ref{eqn: nonsparse_model}) are not unique  (see \cite{HX21}) even if we consider solutions that differ by a unimodular constant factor as the same solution. However, for any given solution $\widehat{\vx}$, the inequality (\ref{eqn: error_complex_main}) holds true.
 } Last but not least, the results of this paper require that the $\va_j, j=1,\ldots,m$,  are  {i.i.d.} sub-Gaussian measurements, when $m\gtrsim d$.  {In fact, random phase retrieval based on sub-Gaussian distribution arises in practical applications, such as imaging in complex media \cite{M}.   In these scenarios, the random linear operator is derived from multiple light scattering or random input patterns synthesized by a programmable device.
 More specifically, the authors in \cite{M} note that the measurement vectors in these scenarios are naturally i.i.d. sub-Gaussian.
 Sub-Gaussian measurements encompass Gaussian measurements as a special case. However, the proof techniques used differ significantly due to the loss of rotation invariance property in sub-Gaussian distributions. Furthermore, certain types of sub-Gaussian measurements, such as the Bernoulli distribution, are incapable of successfully recovering the signal in phase retrieval problems. This necessitates the introduction of additional assumptions on the sub-Gaussian distribution. More detailed information can be found in Section 1.4.
 }


\subsection{Related work}
The investigations of models for phase retrieval are divided into two cases: one for reals signals and the other for complex ones.  We next introduce the known results  for each of these cases.  Additionally, we will also introduce relevant  algorithms for solving the amplitude-based model (\ref{eqn: nonsparse_model}).
\subsubsection{Real signals}
As mentioned before,   if $\va_j,$ $j=1,\ldots,m,$ are  {i.i.d.} real Gaussian random vectors, the solution $\widehat{\vx}$ to (\ref{eqn: nonsparse_model})  satisfies
\begin{equation}\label{eq:upb}
\min\{\|\widehat{\vx}-\vx_0\|_2, \|\widehat{\vx}+\vx_0\|_2\}\lesssim\|\ve\|_2/\sqrt{m}
\end{equation}
provided  that $m \gtrsim d$ \cite{Real_nonlinear_LS}. It is also shown that the bound $\|\ve\|_2/\sqrt{m}$ is tight up to a constant \cite{Real_nonlinear_LS}. There exist algorithms that can almost achieve the performance upper bound. 
 The performance of the alternating projection algorithm to deal with the model (\ref{eqn: nonsparse_model}) is investigated for the noiseless case, i.e., $\ve=\boldsymbol{0}$ \cite{AM15, Fienup_converge}.
For the noisy case, some generalized gradient-type algorithms, such as Amplitude Flow (AF) and Truncated Amplitude Flow (TAF), are discussed in \cite{AF, TAF, PAF}.  Particularly, when $\va_j,$  $j=1,\ldots,m,$ are independent real Gaussian random vectors with $m\gtrsim d$,  the sequence $\{\vz^{(k)}\}_{k\geq 1}$, which is generated by AF or TAF, obeys
\begin{equation}\label{eq:AF}
\min\{\|\vz^{(k)}+\vx_0\|_2,\|\vz^{(k)}-\vx_0\|_2\}\lesssim \frac{\|\ve\|_2}{\sqrt{m}}+\delta^k\|\vx_0\|_2
\end{equation}
 for some $\delta\in (0,1)$ {(see  \cite{AF, TAF})}.    A simple observation is that
  the right-hand side of (\ref{eq:AF}) tends to reach the performance upper bound ${\|\ve\|_2}/{\sqrt{m}}$ when $k$ tends to infinity. However, the convergence result is obtained under the assumption of  $\|\boldsymbol{\ve}\|_\infty\lesssim\|\vx_0\|_2$.


In  \cite{TWF},  Chen and Cand\`es  designed the Truncated Wirtinger Flow (TWF)  algorithm for solving
the Poisson data model:
\[
\argmin{\vx\in \mathbb{R}^d}\,\, -\sum_{j=1}^m\left(y_j\log(|\langle \va_j,\vx\rangle|^2)-|\langle \va_j, \vx\rangle|^2\right).
\]
They showed that if $\va_j$, $j=1,\ldots,m,$ are  {i.i.d.} real Gaussian random vectors with $m\gtrsim d$,
 the sequence $\{\vz^{(k)}\}_{k\geq 1}$ generated by TWF satisfies
\[
\min\{\|\vz^{(k)}+\vx_0\|_2,\|\vz^{(k)}-\vx_0\|_2\}\,\,\lesssim\,\, \frac{\|\ve\|_2}{\sqrt{m}\|\vx_0\|_2}+\delta^k\|\vx_0\|_2
\]
 for some $\delta\in (0,1)$, provided that $\|\ve\|_\infty\lesssim \|\vx_0\|_2^2$.

\subsubsection{Complex signals}
In this section, we introduce the known results for the recovery performance of phase retrieval for complex signals.
 In noisy case, most of  the historical works are based on the intensity-based observations,  i.e.,
\begin{equation}\label{eqn: intensity_observation}
y_j=\abs{\langle \va_j,\vx_0\rangle}^2+\epsilon_j,\qquad j=1,\ldots,m.
\end{equation}



To  estimate $\vx_0\in \mathbb{C}^{d}$ from (\ref{eqn: intensity_observation}), both convex and nonconvex methods are developed. The convex method applies the ``lifting" technique, that is, lifting the signal vector $\vx$ into rank-one matrix $\vX=\vx\vx^*$ and then the quadratic measurements are linearized. The convex relaxation was introduced in \cite{Phaselift1, Phaselift2}. In \cite{Phaselift1}, Cand\`es and Li considered the following model:
\begin{equation}\label{eqn: convex_relaxation}
\argmin{\vX\in \mathbb{C}^{d\times d}}\ \sum_{j=1}^m\left|\va_j^*\vX\va_j-y_j\right|\qquad\text{s.t.}\quad  \vX\succeq0.
\end{equation}
They proved that the solution $\widetilde{\vX}$ to (\ref{eqn: convex_relaxation}) satisfies
\begin{equation*}
\|\widetilde{\vX}-\vx_0\vx_0^*\|_F\,\,\lesssim\,\, \frac{\|\boldsymbol{\epsilon}\|_1}{m}
\end{equation*}
with $\boldsymbol{\epsilon}=(\epsilon_1,\ldots,\epsilon_m)^{\top}$, provided  that $\va_j,$ $j=1,\ldots,m,$ are {i.i.d.} complex Gaussian random vectors with $m\gtrsim d$.{ {The measurements in (\ref{eqn: convex_relaxation}) can also be  extended into a more generalized setting, where $\va_j,$ $j=1,\ldots,m,$ are i.i.d. sub-Gaussian random vectors \cite{Krahmer_subgaussian}.}}
Assume that  $\widetilde{\vx}$ is the largest eigenvector corresponding to the largest eigenvalue of $\widetilde{\vX}$.
According to the error bound presented in  \cite{Phaselift1,Krahmer_subgaussian}, we have
\[
\min_{\theta\in[0,2\pi)}\|\widetilde{\vx}-\exp(\mathrm{i}\theta)\vx_0\|_2\lesssim \min\Big\{\|\vx_0\|_2,\frac{\|\boldsymbol{\epsilon}\|_1}{\|\vx_0\|_2\cdot m}\Big\}.
\]

Since the parameter space in convex relaxation is usually much larger than that of the original problem, a renewed interest is focused directly on optimizing nonconvex formulation:
\begin{equation}\label{eqn: smooth}
\argmin{\vx\in \mathbb{C}^d}\ \sum_{j=1}^m\left(|\langle \va_j,\vx\rangle|^2-y_j\right)^2.
\end{equation}
For the noiseless case,  Cand\`es,   Li, and  Soltanolkotabi developed the Wirtinger Flow method (WF) for solving (\ref{eqn: smooth}) \cite{WF}.
For the noisy case,  Huang and Xu studied the performance of  (\ref{eqn: smooth}) showing that the solution $\widehat{\vx}\in \mathbb{C}^{d}$ to (\ref{eqn: smooth}) satisfies
\begin{equation}\label{eqn: nonconvex_forth}
 \min_{\theta\in[0,2\pi)}\|\widehat{\vx}-\exp(\mathrm{i}\theta)\vx_0\|_2\lesssim\min\left\{\frac{\sqrt{\|\boldsymbol{\epsilon}\|_2}}{m^{1/4}},\frac{\|\boldsymbol{\epsilon}\|_2}{\|\vx_0\|_2\sqrt{m}}\right\},
\end{equation}
provided $m\gtrsim d$ \cite{forthmodel_PR}.  {Furthermore, Sun, Qu, and  Wright in \cite{SQW} established some landscape result in noiseless case, when $m\gtrsim d\log^3 d$. That is, the only local  that are also global minimizers of (\ref{eqn: smooth})  are the target $\exp(\mathrm{i}\theta)\vx_0$ for $\theta\in[0,2\pi)$, and the saddle points and local maximizer have negative curvature in some certain direction. Therefore, any algorithm that  successfully escape from this kind of saddle point  can recover the target signal $\vx_0$ up to a unimodular constant.}
\subsubsection{Sparsity-based models}
 The recovery of sparse signals $\vx_0\in \F^d$ from phaseless measurements is an active topic recently, which is referred to as the  {\em compressive phase retrieval} problem \cite{sparse1,sparse2,sparse3}. In this section, we assume that $\|\vx_0\|_0\leq k$.
 The performance of the $\ell_1$-model for recovering real sparse signals $\vx_0$ is studied  in \cite{sparse2}:
 \begin{equation}\label{eq:L1phase}
 \argmin{\vx\in \R^d}\|\vx\|_1\quad {\rm s.t.}\quad (\sum_{j=1}^m(\abs{\innerp{\va_j,\vx}}-b_j)^2)^{1/2}\leq \epsilon.
 \end{equation}
where  $\{\va_j\}_{j=1}^m$ are  {i.i.d.} real Gaussian random vectors and $b_j=\abs{\innerp{\va_j,\vx_0}}+\eta_j,  j=1,\ldots,m$, with $\|\ve\|_2\leq \epsilon$. The solution $\hat{\vx}$ to (\ref{eq:L1phase}) satisfies
\[
\min \{\|\hat{\vx}-\vx_0\|, \|\hat{\vx}+\vx_0\|\}\lesssim \epsilon/\sqrt{m}
\]
provided that $m\gtrsim k\log (ed/k)$. The method employed in  \cite{sparse2} heavily depends on the real space property like ${\rm sign}(\innerp{\va_j,\vx})\in \{-1,1\}$.

{{The intensity-based model  for complex sparse signals is discussed in \cite{XiaXu}, that is,
 \begin{equation}\label{eq:L1_forth}
 \min_{\vx\in \mathbb{C}^d}\|\vx\|_1\quad {\rm s.t.}\quad (\sum_{j=1}^m(\abs{\innerp{\va_j,\vx}}^2-y_j)^2)^{1/2}\leq \epsilon,
 \end{equation}
 where $y_j=\abs{\langle \va_j,\vx_0\rangle}^2+\epsilon_j$, $j=1,\ldots,m$, with $\|\boldsymbol{\epsilon}\|_2\leq \epsilon$. The solution $\hat{\vx}$ to (\ref{eq:L1_forth}) becomes
 \[
\min_{\theta\in[0,2\pi)}\|\hat{\vx}-\exp(\mathrm{i}\theta)\vx_0\|_2\lesssim \min\Big\{\|\vx_0\|_2+\|\vx_0\|_1,\frac{\epsilon}{\|\vx_0\|_2\cdot \sqrt{m}}\Big\}.
\]}}
\subsubsection{Algorithms}
Although the amplitude-based model (\ref{eqn: nonsparse_model}) is nonconvex,  several efficient algorithms have been developed to solve it.  A popular algorithm for solving (\ref{eqn: nonsparse_model})  is the alternating projection algorithms ( see Fienup \cite{Fienup1}, Gerchberg and Saxton \cite{Fienup2}).
 {The alternating projection algorithm can be equivalently formulated as solving the following model:
\begin{equation}\label{eqn: GS}
\min_{\boldsymbol{x}\in \mathbb{C}^d,\ \boldsymbol{c}\in \mathbb{C}^m} \|\boldsymbol{A}\boldsymbol{x}-\boldsymbol{c}\odot \boldsymbol{b}\|_2,
\end{equation}
where  $\boldsymbol{c}=(c_1,\ldots,c_m)\in \mathbb{C}^m$  with $|c_j|=1$, $j=1,\ldots, m$ and $\odot$ is element-wise multiplication operator. Through a simple calculation, it can be observed that if the entries of $\boldsymbol{b}$ are nonnegative, $\widehat{\boldsymbol{x}}$ is a solution to the amplitude-based model if and only if $(\widehat{\boldsymbol{x}}, \text{sign}(\boldsymbol{A}\widehat{\boldsymbol{x}}))$ is a solution to (\ref{eqn: GS}).
The alternating projection algorithm employs the well-known alternating minimization technique, where $\boldsymbol{x}$ and $\boldsymbol{c}$ are updated alternatively to minimize (\ref{eqn: GS}) \cite{Fienup1, Fienup2}.
In cases where measurements are noiseless, Waldspurger \cite{Fienup_converge} has shown convergence results for complex Gaussian measurements when $m\gtrsim d$. Although there is a lack of theoretical guarantees for generalized Wirtinger flow methods like TAF \cite{TAF} and Reshaped WF \cite{RWF} in the complex scenario, numerical results suggest that these algorithms demonstrate commendable performance in complex situations.
}
\subsection{Notations}
 Denote $\mathbb{H}^{d\times d}$ as the $d\times d$ Hermitian matrices set. For a matrix $\vX\in
\mathbb{C}^{d_1\times d_2}$, we use
$\vX_{S,T}$ to denote the submatrix of $\vX$ with the rows indexed in $S$ and columns
indexed in $T$.  We use $\vX_{l,:}$  and $\vX_{:,j}$  to denote the $l$-th row and the
$j$-th column of $\vX$, respectively. Take $\|\vX\|_{0,2}$ as  the number of
non-zero columns in $\vX$.
Set
$\|\vX\|_1:=\sum_{j,l}{\sqrt{\text{R}(X_{j,l})^2+\text{I}(X_{j,l})^2}}$,
$\|\vX\|_F:=\sqrt{\sum_{j,l}(\text{R}(X_{j,l})^2+\text{I}(X_{j,l})^2)}$ and
$\|\vX\|_{1,2}:=\sum_{j}\|\vX_{:,j}\|_2$, where $\text{R}(X_{j,l})$ and $\text{I}(X_{j,l})$ are the real and imaginary parts of $X_{j,l}$, respectively. For any $\vX, \boldsymbol{Y}\in \mathbb{C}^{d_1\times d_2}$, set $\langle \vX, \boldsymbol{Y}\rangle
:=\text{Tr}(\vX^*\boldsymbol{Y})$.

    We use $ A \gtrsim B$ to denote $A\ge c B$,
where $c$ is some positive absolute constant. The notation $\lesssim$ can be defined
similarly.
Without specific notation, we use $C, c$ and their superscript
(subscript)   forms to denote universal constants and their values may vary with different contexts.
\subsection{Our contribution}
As mentioned before,  the reconstruction errors of (\ref{eqn: nonsparse_model}) and (\ref{eqn: sparse_model}) were only studied for $\F=\R$ and real Gaussian measurements. {{In this paper, we consider the case of $\F=\mathbb{C}$ and {\em sub-Gaussian measurements}. The complex random measurement vectors $\{\va_j\}_{j=1}^m$ are taken as independent copies of a random vector $\va=(a_1,\ldots, a_d)\in \mathbb{C}^{d}$, whose elements $a_j, j\in\{1,\ldots,d\}$ are independently chosen from some sub-Gaussian distribution defined as below:
\begin{definition}
A random variable $\xi$ is called a sub-Gaussian random variable, if there exist constants $\alpha, \kappa>0$ such that $\mathbb{P}(|\xi|\geq t)\leq \alpha \exp(-\kappa t^2)$ for all $t>0$. The sub-Gaussian norm of $\xi$ is denoted as
\begin{equation}\label{eqn: C_phi}
\|\xi\|_{\psi_2}=\sup_{p\geq 1} p^{-1/2} (\mathbb{E}|\xi|^p)^{1/p}.
\end{equation}
\end{definition}
For simplicity, throughout this paper, we can assume that $\mathbb{E}a_j=0$, $\mathbb{E}|a_j|^2=1$ and $\mathbb{E}|a_j|^4\neq 1$. The assumption that the fourth moment $\mathbb{E}|a_j|^4\neq 1$ is made primarily because unique recovery may not be possible without this assumption. For example, take $a_j=\frac{\sqrt{2}}{2}\xi_{j,1}+\frac{\sqrt{2}}{2}\xi_{j,2}\mathrm{i}$ with $\xi_{j,1}$ and $\xi_{j,2}$ to be i.i.d. Rademacher random variables that take values in $\{ +1, -1\}$ with equal probability.  Then the measurements will always yield the same observations $\boldsymbol{y}$ for the following two signals:
\[
\vx_1=(1,0,\ldots,0),\quad\text{and}\quad \vx_2=(0,1,\ldots,0).
\]

If  $\mathbb{E}\abs{a_j}^2=1$ and $\mathbb{E}\abs{a_j}^4\neq 1$,  Jensen's inequality implies that $\mathbb{E}\abs{a_j}^4>1$.}}  {For simplicity, we assume that $a_{j}=a_{j,1}+a_{j,2}\mathrm{i}$, where $a_{j,1}$ and $a_{j,2}$ are independent copies of some real sub-Gaussian random variables. It leads to  $\mathbb{E}a_{j}^2=0$. We put the assumption $\mathbb{E}a_{j}^2=0$ here for the convenience of the proofs in the main results.   }
Therefore, in the following statements, we consider the measurement elements drawn from some sub-Gaussian random variable $\xi=\xi_1+\xi_2\mathrm{i}$ with $\|\xi\|_{\psi_2}\leq C_{\psi}$, $\mathbb{E}\xi=0$, $\mathbb{E}\xi^2=0$, $\mathbb{E}|\xi|^2=1$, and $\mathbb{E}|\xi|^4=\gamma>1$, where $\xi_1$ and $\xi_2$ are independent copies of some real sub-Gaussian random variable.

The following theorem establishes the performance of the amplitude-based model  (\ref{eqn: nonsparse_model})  for $\F=\mathbb{C}$.
We want to mention that the proof strategy of Theorem \ref{th:upbound} is quite different from that in \cite{Real_nonlinear_LS}.
\begin{theorem}\label{th:upbound}
Assume that
 $\va_j\in \mathbb{C}^d,  \ j=1,\ldots,m,$ are  i.i.d.
complex sub-Gaussian random vectors, whose elements are independently drawn from some sub-Gaussian random variable $\xi$ with $\|\xi\|_{\psi_2}\leq C_{\psi}$, $\mathbb{E}\xi=0$, $\mathbb{E}\xi^2=0$, $\mathbb{E}|\xi|^2=1$, and $\mathbb{E}|\xi|^4=\gamma>1$.  Assume that $m\gtrsim d$. Then the following holds with probability at least $1-2 \exp(-c_0m/4)$:
for all $\vx_0\in \mathbb{C}^d$ and $\ve=(\eta_1,\ldots,\eta_m)\in \R^m$, the solution  $\widehat{\vx}$ to (\ref{eqn: nonsparse_model}) with
$\mathbb{F}=\mathbb{C}$ and $b_j=\abs{\innerp{\va_j,\vx_0}}+\eta_j$, $j=1,\ldots,m$, satisfies
 \[
 \min_{\theta\in[0,2\pi)}\|\widehat{\vx}-\exp(\mathrm{i}\theta)\vx_0\|_2\lesssim \frac{\|\ve\|_2}{\sqrt{m}}.
 \]
Here, $c_0$ is a positive constant depending on $C_{\psi}$ and $\gamma$.
\end{theorem}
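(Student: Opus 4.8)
The plan is to reduce the error bound to a single ``strong restricted isometry'' inequality for the amplitude map $\vx \mapsto (|\langle \va_j,\vx\rangle|)_{j=1}^m$ on the relevant low-dimensional set, and then exploit optimality of $\widehat{\vx}$. Concretely, write $\widehat{\vh} := \widehat{\vx} - e^{\mathrm{i}\theta^\star}\vx_0$ where $\theta^\star$ achieves $\min_{\theta}\|\widehat{\vx}-e^{\mathrm{i}\theta}\vx_0\|_2$, and (abusing notation) replace $\vx_0$ by $e^{\mathrm{i}\theta^\star}\vx_0$ so that the optimal alignment is at $\theta=0$. Since $\widehat{\vx}$ minimizes $\sum_j(|\langle\va_j,\vx\rangle|-b_j)^2$ and $\vx_0$ is feasible, we get $\sum_j(|\langle\va_j,\widehat{\vx}\rangle|-b_j)^2 \le \sum_j(|\langle\va_j,\vx_0\rangle|-b_j)^2 = \|\ve\|_2^2$. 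Expanding with $b_j = |\langle\va_j,\vx_0\rangle|+\eta_j$ and using the reverse triangle inequality $\big||\langle\va_j,\widehat{\vx}\rangle|-|\langle\va_j,\vx_0\rangle|\big| \le |\langle\va_j,\widehat{\vx}-\vx_0\rangle| = |\langle\va_j,\widehat{\vh}\rangle|$, a standard rearrangement yields
\begin{equation*}
\sum_{j=1}^m \big(|\langle\va_j,\widehat{\vx}\rangle|-|\langle\va_j,\vx_0\rangle|\big)^2 \;\le\; 2\sum_{j=1}^m |\eta_j|\,|\langle\va_j,\widehat{\vh}\rangle| \;\le\; 2\|\ve\|_2\Big(\sum_{j=1}^m|\langle\va_j,\widehat{\vh}\rangle|^2\Big)^{1/2}.
\end{equation*}

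The crux is therefore a \emph{two-sided} bound. For the right-hand side, $\widehat{\vh}$ lies in a (real) space of dimension at most $4d$ (spanned over $\R$ by the real/imaginary parts of $\widehat{\vx}$ and $\vx_0$ coordinates), so a Bernstein/$\varepsilon$-net argument over that subspace gives $\sum_j|\langle\va_j,\widehat{\vh}\rangle|^2 \lesssim m\|\widehat{\vh}\|_2^2$ with the stated probability once $m\gtrsim d$. For the left-hand side I need a \emph{lower} bound of the form $\sum_j\big(|\langle\va_j,\widehat{\vx}\rangle|-|\langle\va_j,\vx_0\rangle|\big)^2 \gtrsim m\,\min_\theta\|\widehat{\vx}-e^{\mathrm{i}\theta}\vx_0\|_2^2 = m\|\widehat{\vh}\|_2^2$; this is precisely the ``strong RIP'' for the amplitude operator on the set of differences of two vectors, uniform over all $\vx_0,\widehat{\vx}\in\mathbb{C}^d$. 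Granting both, the displayed inequality becomes $c\,m\|\widehat{\vh}\|_2^2 \le 2\|\ve\|_2\cdot C\sqrt{m}\,\|\widehat{\vh}\|_2$, i.e. $\|\widehat{\vh}\|_2 \lesssim \|\ve\|_2/\sqrt{m}$, which is the theorem.

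The main obstacle—and where essentially all the work lies—is establishing the lower bound $\inf_{\vu,\vv}\frac{1}{m}\sum_j\big(|\langle\va_j,\vu\rangle|-|\langle\va_j,\vv\rangle|\big)^2 \gtrsim \min_\theta\|\vu-e^{\mathrm{i}\theta}\vv\|_2^2$ uniformly. Several difficulties compound here: (i) the quantity is not a norm of a fixed linear image, because the correct ``phase'' $\theta$ depends on $\vu,\vv$; a natural route is to pass to the lifted Hermitian matrices $\vu\vu^*-\vv\vv^*$ and relate $\big(|\langle\va_j,\vu\rangle|-|\langle\va_j,\vv\rangle|\big)^2$ to $\langle \va_j\va_j^*, \vu\vu^*-\vv\vv^*\rangle$ up to cross terms, then invoke the strong RIP on rank-$\le 2$ (and, in the sparse case, sparse) Hermitian matrices; (ii) because the $a_j$ are only sub-Gaussian, not Gaussian, rotation invariance is lost, so the expectation $\E\big(|\langle\va,\vu\rangle|-|\langle\va,\vv\rangle|\big)^2$ must be computed/bounded directly using $\E|\xi|^2=1$, $\E\xi^2=0$, and crucially $\gamma=\E|\xi|^4>1$ to rule out degenerate directions (this is exactly why the $\gamma>1$ hypothesis enters); (iii) upgrading the pointwise-in-expectation lower bound to a uniform-over-all-$\vu,\vv$ high-probability statement requires a covering/concentration argument on the manifold of rank-$2$ matrices with a lower bound on $\|\vu\vu^*-\vv\vv^*\|_F$, handling the non-Lipschitz absolute values via a truncation of large $|\langle\va_j,\cdot\rangle|$ as in the TAF analysis. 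I would isolate this uniform lower bound as a separate lemma (the ``strong RIP for simultaneously low-rank matrices'' the abstract alludes to), prove it via an $\varepsilon$-net plus a small-ball/Paley–Zygmund argument for the lower tail and a sub-exponential Bernstein bound for the upper tail, and then the rest of the proof is the short deterministic manipulation above.
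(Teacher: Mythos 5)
Your deterministic skeleton is sound and the reduction you describe would prove the theorem, but the route is genuinely different from the paper's proof of Theorem \ref{th:upbound}. The paper never forms the $\ell_2$ quantity $\sum_j\bigl(|\langle\va_j,\widehat{\vx}\rangle|-|\langle\va_j,\vx_0\rangle|\bigr)^2$. Instead it starts from the $\ell_1$ consequence of optimality, $\sum_j\bigl||\langle\va_j,\widehat{\vx}\rangle|-b_j\bigr|\le\sqrt{m}\|\ve\|_2$, rewrites
$\bigl||\langle\va_j,\widehat{\vx}\rangle|-|\langle\va_j,\vx_0\rangle|\bigr|=\left|\innerp{\va_j\va_j^*,\widehat{\vx}\widehat{\vx}^*-\vx_0\vx_0^*}\right|/\bigl(|\langle\va_j,\widehat{\vx}\rangle|+|\langle\va_j,\vx_0\rangle|\bigr)$, discards the $\beta_0 m$ indices with the largest denominators, and invokes an $\ell_1$-type restricted isometry property $\frac1m\|\cA_I(\vX)\|_1\asymp\|\vX\|_F$ for rank-$2$ Hermitian $\vX$ holding \emph{simultaneously over all} subsets $I$ with $\#I\ge(1-\beta_0)m$ (Theorem \ref{thm: SRIP}), plus Lemma \ref{Lemma 3.2} to descend from $\|\widehat{\vx}\widehat{\vx}^*-\vx_0\vx_0^*\|_F$ to the phase-invariant vector distance. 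Your $\ell_2$ starting inequality and two-sided strategy are in fact the ones the paper deploys for Theorem \ref{thm: better_estimation}, and your key lower bound is true: the chain (\ref{eqn: thm3_temp1}) there establishes exactly $\sum_j\bigl(|\langle\va_j,\widehat{\vx}\rangle|-|\langle\va_j,\vx_0\rangle|\bigr)^2\gtrsim m\min_\theta\|\widehat{\vx}-e^{\mathrm{i}\theta}\vx_0\|_2^2$, using the same subset-uniform RIP. What each route buys: the paper's $\ell_1$ argument never needs a lower bound on a sum of squares, at the price of a case analysis in Lemma \ref{lem: non_sparse}; yours has a shorter deterministic core but pushes all the difficulty into one uniform lower bound.

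That lower bound is the one place your sketch is thin. A direct $\varepsilon$-net plus Paley--Zygmund argument on pairs $(\vu,\vv)$ hits a scale obstruction: the target $\min_\theta\|\vu-e^{\mathrm{i}\theta}\vv\|_2$ can be arbitrarily small relative to $\|\vu\|_2+\|\vv\|_2$, so there is no single resolution at which to net. Passing to the lift repairs the homogeneity, since $\|\vu\vu^*-\vv\vv^*\|_F\gtrsim(\|\vu\|_2+\|\vv\|_2)\min_\theta\|\vu-e^{\mathrm{i}\theta}\vv\|_2$ by Lemma \ref{Lemma 3.2}, but then one must control the denominators $|\langle\va_j,\vu\rangle|+|\langle\va_j,\vv\rangle|$ by $C(\|\vu\|_2+\|\vv\|_2)$ \emph{uniformly in $(\vu,\vv)$}, which fails for the largest few $j$. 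Your ``truncation as in TAF'' remark points at the right cure but is not yet uniform in $(\vu,\vv)$; the paper's device --- keep only the $(1-\beta_0)m$ smallest denominators (a set depending on $(\vu,\vv)$) and prove the RIP over all such subsets at once --- is precisely what makes this step go through, and you would need it or an equivalent. Your identification of where $\gamma>1$ enters (the lower bound on $\E|\va^*\vX\va|^2$, hence the small-ball estimate) matches the paper.
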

\begin{remark}{
In \cite{albert}, Fannjiang and  Strohmer discussed another type of noise, i.e., ${b}_j=\left|\langle\va_j, \vx_0 \rangle+\widetilde{\eta}_j\right|$, $j=1,\ldots,m$, where $\widetilde{\ve}:=(\widetilde{\eta}_1,\ldots,\widetilde{\eta}_m)^{\top}$ is a noise term. Since ${b}_j$ can be rewritten as ${b}_j=\left|\langle\va_j, \vx_0 \rangle\right|+\eta_j$ with $\eta_j:=\left|\langle\va_j, \vx_0 \rangle+\widetilde{\eta}_j\right|-\left|\langle\va_j, \vx_0 \rangle\right|$, a simple calculation shows that $\left|{\eta}_j\right|\leq \left|\widetilde{\eta}_j\right|$, and hence we can use Theorem \ref{th:upbound} to show that the solution  $\widehat{\vx}$ to (\ref{eqn: nonsparse_model}) with
$\mathbb{F}=\mathbb{C}$ and $b_j=\abs{\innerp{\va_j,\vx_0}+\widetilde{\eta}_j}$, $j=1,\ldots,m$, satisfies
 \[
 \min_{\theta\in[0,2\pi)}\|\widehat{\vx}-\exp(\mathrm{i}\theta)\vx_0\|_2\lesssim \frac{\|\widetilde{\ve}\|_2}{\sqrt{m}}.
 \]}
\end{remark}

{{The result in Theorem \ref{th:upbound} extends the error bound (\ref{eq:upb}) for the real signals from  \cite{Real_nonlinear_LS} to complex ones by employing different tools. In \cite{Real_nonlinear_LS}, since  ${\rm sign}(\innerp{\va_j,\vx})$ only contains two values $\{+1, -1\}$ in the real case, the error bound is analyzed by dividing  ${\rm sign}(\innerp{\va_j,\vx})$ into two separable sets. As the phase in the complex case lies in the complex unit circle, the separation technique in the real case can not be extended to the complex one. To resolve the phase ambiguity, we employ the lifting technique and examine the uniform concentration inequality on low-rank and sparse matrices (refer to Theorem \ref{thm: SRIP}), which plays a crucial role in the proof.  {
The tools employed in the proof of Theorem \ref{th:upbound} can be easily extended to the real case.
 However, as the results for the real case have been previously established in \cite{Real_nonlinear_LS}, we will not delve into the detailed expansion. }}}


The following theorem presents a lower bound for the estimation error for any fixed $\vx_0$, which implies that the reconstruction error presented in Theorem \ref{th:upbound} is tight.
\begin{theorem}\label{thm: sharpness}
Assume that  $\ve=(\eta_1,\ldots,\eta_m)\in \R^m$ satisfies
\begin{equation}\label{eqn: eta_condition}
\left|\sum_{j=1}^m\eta_j\right|\geq C \sqrt{m}\|\boldsymbol{\ve}\|_2,
\end{equation}
for some absolute constant $C\in (0,1)$.  Under the assumption of Theorem \ref{th:upbound} about $\va_j\in \mathbb{C}^d, j=1,\ldots,m$, for any fixed $\vx_0\in \mathbb{C}^d$ satisfying $\|\vx_0\|_2\gtrsim \frac{\|\ve\|_2}{\sqrt{m}} $,
the following holds with probability at least $1-6\exp(-cm)$:
the solution  $\widehat{\vx}$ to (\ref{eqn: nonsparse_model}) with $\mathbb{F}=\mathbb{C}$ and
$b_j:=\abs{\innerp{\va_j,\vx_0}}+\eta_j,$ $j=1,\ldots,m,$ satisfies
\begin{equation}\label{eqn: lower_error}
 \min_{\theta\in[0,2\pi)}\|\widehat{\vx}-\exp(\mathrm{i}\theta)\vx_0\|_2\gtrsim \frac{\|\ve\|_2}{\sqrt{m}},
\end{equation}
provided  $m\gtrsim d$.  Here, $c$ is a positive constant depending on $C_{\psi}$ and $\gamma$.
\end{theorem}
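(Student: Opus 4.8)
The plan is to establish the lower bound \eqref{eqn: lower_error} by exhibiting a specific competitor to $\widehat{\vx}$ in the optimization \eqref{eqn: nonsparse_model} and comparing objective values. First I would recall that $\widehat{\vx}$ is a global minimizer, so $\sum_{j=1}^m(|\langle \va_j,\widehat{\vx}\rangle|-b_j)^2 \leq \sum_{j=1}^m(|\langle \va_j,\vx_0\rangle|-b_j)^2 = \|\ve\|_2^2$, since plugging $\vx_0$ itself gives exactly $\sum_j \eta_j^2$. Combined with the upper bound from Theorem \ref{th:upbound}, this tells us $\widehat{\vx}$ lies in a small neighborhood (up to global phase) of $\vx_0$, of radius $\lesssim \|\ve\|_2/\sqrt{m}$; after replacing $\widehat{\vx}$ by $e^{\mathrm{i}\theta^\star}\widehat{\vx}$ for the optimal $\theta^\star$, write $\vh := \widehat{\vx} - \vx_0$, so $\|\vh\|_2 \lesssim \|\ve\|_2/\sqrt{m}$ and in particular $\|\vh\|_2 \ll \|\vx_0\|_2$ by the hypothesis $\|\vx_0\|_2 \gtrsim \|\ve\|_2/\sqrt{m}$ (with the implied constant chosen large enough). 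The goal is to show $\|\vh\|_2 \gtrsim \|\ve\|_2/\sqrt{m}$ from below.

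The key analytic step is a lower-order expansion of the objective near $\vx_0$. Since $\|\vh\|_2$ is small relative to $\|\vx_0\|_2$, for the bulk of indices $j$ one has $\langle \va_j,\vx_0\rangle \neq 0$ and $|\langle \va_j,\widehat{\vx}\rangle| = |\langle\va_j,\vx_0\rangle| + \mathrm{Re}\big(\overline{\mathrm{sign}(\langle\va_j,\vx_0\rangle)}\langle\va_j,\vh\rangle\big) + O(|\langle\va_j,\vh\rangle|^2/|\langle\va_j,\vx_0\rangle|)$. Writing $u_j := \mathrm{Re}\big(\overline{\mathrm{sign}(\langle\va_j,\vx_0\rangle)}\langle\va_j,\vh\rangle\big)$, the optimality inequality $\sum_j(|\langle\va_j,\widehat{\vx}\rangle| - |\langle\va_j,\vx_0\rangle| - \eta_j)^2 \leq \sum_j\eta_j^2$ expands to $\sum_j (u_j + r_j)^2 - 2\sum_j \eta_j(u_j+r_j) \leq 0$, where $r_j$ is the quadratic remainder. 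Expanding and keeping the linear term, this forces $2\sum_j \eta_j u_j \gtrsim \sum_j u_j^2 + (\text{controlled remainder terms})$, and in particular $\sum_j \eta_j u_j \geq 0$ with a quantitative lower bound. Now I would use the condition \eqref{eqn: eta_condition}: the vector $\ve$ has a large ``DC component'' $\sum_j \eta_j$. The strategy is to choose the reference direction cleverly — or rather, to show that {\em any} small $\vh$ satisfying the optimality inequality cannot have $\|\vh\|_2$ too small, because the term $\sum_j \eta_j u_j$ must then be both small (bounded by $\|\ve\|_2 \cdot (\sum_j u_j^2)^{1/2} \cdot$ constant via Cauchy–Schwarz and a restricted-isometry-type control of $\sum_j u_j^2 \asymp m\|\vh\|_2^2$) and comparable to a genuinely linear-in-$\vh$ quantity that, for the specific structure of $u_j$, has expectation proportional to $\sum_j \eta_j$ times a component of $\vh$. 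Concretely, $\E_{\va}[u_j \mid \vh] $ relates $\langle\va_j,\vh\rangle$ projected onto the phase of $\langle\va_j,\vx_0\rangle$; averaging over $j$ and using \eqref{eqn: eta_condition} to extract the DC part, one shows the perturbation $\vh=\mathbf{0}$ is infeasible unless $\eta \equiv 0$, and more precisely that feasibility forces $\|\vh\|_2 \gtrsim |\sum_j\eta_j|/(\sqrt m\,\|\vx_0\|_{?})$-type bound which, via \eqref{eqn: eta_condition}, is $\gtrsim \|\ve\|_2/\sqrt m$.

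Concretely, the cleanest route I would pursue: take the trial point $\vx_0$ itself and also the perturbed trial point $\vx_0(1 - \delta/\|\vx_0\|_2)$ for a small scalar $\delta>0$ (scaling along $\vx_0$), for which $|\langle\va_j,\vx_0(1-\delta/\|\vx_0\|_2)\rangle| = |\langle\va_j,\vx_0\rangle|(1 - \delta/\|\vx_0\|_2)$ exactly. The objective at this point is $\sum_j\big(|\langle\va_j,\vx_0\rangle|\delta/\|\vx_0\|_2 - \eta_j\big)^2 = \|\ve\|_2^2 - 2(\delta/\|\vx_0\|_2)\sum_j \eta_j|\langle\va_j,\vx_0\rangle| + (\delta/\|\vx_0\|_2)^2\sum_j|\langle\va_j,\vx_0\rangle|^2$. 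If $\sum_j\eta_j|\langle\va_j,\vx_0\rangle|$ were negative we would flip the sign of $\delta$; the point is that $\sum_j|\langle\va_j,\vx_0\rangle|^2 \asymp m\|\vx_0\|_2^2$ with high probability (sub-Gaussian concentration, Theorem \ref{thm: SRIP} or a standard matrix Bernstein bound), so optimizing over $\delta$ shows the minimum objective value is at most $\|\ve\|_2^2 - c(\sum_j\eta_j|\langle\va_j,\vx_0\rangle|)^2/(m\|\vx_0\|_2^2)$. Then I would lower-bound $|\sum_j\eta_j|\langle\va_j,\vx_0\rangle||$: here is where \eqref{eqn: eta_condition} enters, since $\E|\langle\va_j,\vx_0\rangle| \asymp \|\vx_0\|_2$ and a concentration argument gives $\sum_j \eta_j|\langle\va_j,\vx_0\rangle| = c\|\vx_0\|_2\sum_j\eta_j + (\text{fluctuation of order }\|\ve\|_2\|\vx_0\|_2)$, which by \eqref{eqn: eta_condition} is $\gtrsim \|\vx_0\|_2\sqrt m\|\ve\|_2$ in absolute value. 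Hence the optimal objective is $\leq \|\ve\|_2^2 - c'\|\ve\|_2^2$, i.e. strictly less than $\|\ve\|_2^2$ by a definite fraction. On the other hand, I would show a \emph{local} lower bound: for any $\vx$ with $\min_\theta\|\vx - e^{\mathrm{i}\theta}\vx_0\|_2 =: t \leq \delta_1\|\vx_0\|_2$ small, the objective is at least $\|\ve\|_2^2 - C t\sqrt m\|\ve\|_2 + c'' m t^2$ — obtained by the quadratic expansion above together with a restricted isometry lower bound on $\sum_j u_j^2$ (Theorem \ref{thm: SRIP} applied to the rank-$\leq 2$ matrix $\vx\vx^* - \vx_0\vx_0^*$) controlling the quadratic term from below, and Cauchy–Schwarz controlling the cross term from above. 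Comparing: since $\widehat{\vx}$ achieves objective $\leq \|\ve\|_2^2 - c'\|\ve\|_2^2$, we need $-Ct\sqrt m\|\ve\|_2 + c''mt^2 \leq -c'\|\ve\|_2^2$ where $t = \min_\theta\|\widehat{\vx}-e^{\mathrm{i}\theta}\vx_0\|_2$; viewing the left side as a quadratic in $t$, this is impossible for $t$ in a neighborhood of $0$, forcing $t \gtrsim \|\ve\|_2/\sqrt m$, which is the claim.

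\textbf{Main obstacle.} The delicate point is the concentration statement $\sum_j \eta_j|\langle\va_j,\vx_0\rangle| \approx c\|\vx_0\|_2\sum_j\eta_j$ with a fluctuation bounded by $O(\|\ve\|_2\|\vx_0\|_2)$ \emph{uniformly} — but since $\vx_0$ is \emph{fixed} here (unlike in Theorem \ref{th:upbound}), no union bound over $\vx_0$ is needed, so this reduces to a single concentration inequality for the random variable $\sum_j\eta_j|\langle\va_j,\vx_0\rangle|$ around its mean; the subtlety is that $|\langle\va_j,\vx_0\rangle|$ is sub-Gaussian but not centered, and its mean $\E|\langle\va_j,\vx_0\rangle|$ must be shown to be $\asymp\|\vx_0\|_2$ with a constant bounded away from zero for \emph{every} sub-Gaussian law satisfying our normalization — this uses a small-ball / anti-concentration estimate for $\langle\va_j,\vx_0\rangle$ (Paley–Zygmund, since the second and fourth moments are controlled: $\E|\langle\va_j,\vx_0\rangle|^2 = \|\vx_0\|_2^2$ and $\E|\langle\va_j,\vx_0\rangle|^4 \leq C\|\vx_0\|_2^4$). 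The other obstacle is the restricted isometry lower bound on $\sum_j u_j^2 \asymp \sum_j |\langle\va_j,\vh\rangle_{\mathrm{phase}}|^2$; this is exactly the strength of Theorem \ref{thm: SRIP}, applied with sparsity level the full dimension, so it should go through directly once one writes $u_j$ in bilinear form in $\vx\vx^* - \vx_0\vx_0^*$. I expect the bookkeeping of the quadratic remainder $r_j$ near indices where $\langle\va_j,\vx_0\rangle$ is small to require the usual truncation argument, but the fixed-$\vx_0$ setting keeps this routine.
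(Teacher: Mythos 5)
Your argument is correct in substance but takes a genuinely different route from the paper's. The paper works from the first-order optimality condition: it computes the subgradient of the objective at $\widehat{\vx}$, pairs it with $\widehat{\vx}$ to obtain the identity $\sum_{j}(|\langle\va_j,\widehat{\vx}\rangle|-b_j)|\langle\va_j,\widehat{\vx}\rangle|=0$, and then sandwiches $\bigl|\sum_j\eta_j|\langle\va_j,\widehat{\vx}\rangle|\bigr|$ between a lower bound $\gtrsim\sqrt{m}\,\|\ve\|_2\|\widehat{\vx}\|_2$ (Lemma \ref{lem: lower}, which must hold \emph{uniformly} over all $\vx$ via an $\epsilon$-net because $\widehat{\vx}$ depends on the data) and an upper bound $\sum_j|\langle\va_j,\widehat{\vx}\rangle|\,|\langle\va_j,\widehat{\vx}-e^{\mathrm{i}\theta}\vx_0\rangle|\lesssim m\|\widehat{\vx}\|_2\|\widehat{\vx}-e^{\mathrm{i}\theta}\vx_0\|_2$ (Lemma \ref{lem: upper}). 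You instead compare objective values directly: the radial competitor $(1-\delta/\|\vx_0\|_2)\vx_0$ shows the minimum value is at most $(1-c')\|\ve\|_2^2$, while the reverse triangle inequality plus the sub-Gaussian operator-norm bound shows that any $\vx$ at phase-distance $t$ from $\vx_0$ has objective at least $\|\ve\|_2^2-Ct\sqrt{m}\|\ve\|_2$; comparing the two forces $t\gtrsim\|\ve\|_2/\sqrt m$. Your route buys two simplifications: no subgradient calculus, and the $\eta$-weighted concentration $\bigl|\sum_j\eta_j|\langle\va_j,\vx_0\rangle|\bigr|\gtrsim\sqrt m\,\|\ve\|_2\|\vx_0\|_2$ is needed only at the \emph{fixed} $\vx_0$, so no net argument is required for that step. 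The probabilistic ingredients are the same at heart: $\mathbb{E}|\langle\va,\vx\rangle|\geq\gamma^{-1/2}\|\vx\|_2$ via the moment ratio (your Paley--Zygmund remark is exactly the paper's inequality (\ref{eqn: lower_exp_z})), Hoeffding for the fluctuation, and the operator-norm bound. Two small points of housekeeping: (i) the localization and quadratic-expansion machinery of your first two paragraphs, and the $+c''mt^2$ term in your local lower bound, are not needed --- the bare cross-term estimate already closes the argument, which also spares you the SRIP/truncation bookkeeping near indices where $\langle\va_j,\vx_0\rangle$ is small; (ii) if the optimal $\delta^*\asymp\|\ve\|_2/\sqrt m$ exceeds $\|\vx_0\|_2$, cap $\delta$ at $\|\vx_0\|_2/2$; the resulting decrease is still of order $\|\ve\|_2^2$ precisely because of the hypothesis $\|\vx_0\|_2\gtrsim\|\ve\|_2/\sqrt m$.
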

\begin{remark}
 Theorem  \ref{thm: sharpness} requires that $\ve$ satisfies (\ref{eqn: eta_condition}).
 In fact, a lot of $\ve\in \R^m$ satisfy (\ref{eqn: eta_condition}).  {{For instance, taking $\eta_j\in[a,b]$ with $0<a<b$, we have
 \[
 \left|\sum_{j=1}^m\eta_j\right|\geq \frac{a}{b}\cdot\sqrt{m}\|\ve\|_2.
 \]
 Moreover, $\eta_j$ can be taken as random noise. Consider $\eta_j\overset{i.i.d.}\sim\mathcal{N}(\mu,\sigma^2)$. For convenience, assume that  {$\mu>0$}  ($\mu< 0$ can be discussed similarly).}} Then we have $\sum_{j=1}^m \eta_j\sim \mathcal{N}(\mu m, \sigma^2 m)$ which implies that
\begin{equation}\label{eq:t1t2}
\mathbb{P}\Big(\Big|\sum_{j=1}^m\eta_j-\mu m\Big|\geq \sigma t_1\Big)\leq 2\exp\left(-\frac{ct_1^2}{m}\right)\ \text{and}\  \
\mathbb{P}\left(\left|\sum_{j=1}^m(\eta_j-\mu )^2-\sigma^2 m\right|\geq \sigma ^2 t_2\right)\leq 2\exp\left(-\frac{ct_2^2}{m}\right).
\end{equation}
Taking $t_1=c_1 m$ and $t_2=c_2 m$ in (\ref{eq:t1t2}), we obtain that the following holds with probability at least $1-2\exp\left(-c_1cm\right)-2\exp\left(-{c_2cm}\right)$:
\begin{equation}\label{eqn: eta_square}
(\mu-c_1\sigma)m \leq \sum_{j=1}^m\eta_j\leq (\mu+c_1\sigma)m\qquad\text{and}\qquad \sigma^2(1-c_2)m\leq \sum_{j=1}^m (\eta_j-\mu)^2\leq \sigma^2(1+c_2)m,
\end{equation}
which implies
\[
\sum_{j=1}^m \eta_j^2\leq 2\mu\sum_{j=1}^m\eta_j+(-\mu^2+\sigma^2(1+c_2))m\leq 2\mu(\mu+c_1\sigma)m+(-\mu^2+\sigma^2(1+c_2))m.
\]
To guarantee  $\ve$ satisfy $\left|\sum_{j=1}^m\eta_j\right|\geq C \sqrt{m}\|\boldsymbol{\ve}\|_2$, it is enough to require that
\[
\mu-c_1\sigma \geq C\sqrt{2\mu(\mu+c_1\sigma)-\mu^2+\sigma^2(1+c_2)},
\]
when $c_1<\mu/\sigma$,
which is equivalent to
\begin{equation}\label{eq:Ce}
C\leq \frac{\mu-c_1\sigma}{\sqrt{2\mu(\mu+c_1\sigma)-\mu^2+\sigma^2(1+c_2)}}.
\end{equation}
We can choose $\sigma>0$ is small enough, say $\sigma=\sigma_0$, so that (\ref{eq:Ce}) holds. Hence, if $\eta_j\overset{i.i.d.}\sim\mathcal{N}(\mu,\sigma_0^2)$ with $\mu> 0$, then  (\ref{eqn: eta_condition}) holds with high probability.
\end{remark}

 {
In Theorem \ref{thm: sharpness}, we necessitate a large value for $\abs{\sum_{j=1}^m\eta_j}$. Nevertheless, we put forth Theorem \ref{thm: better_estimation} to illustrate the potential for a superior estimation when $\abs{\sum_{j=1}^m\eta_j}$ is of a smaller magnitude.
\begin{theorem}\label{thm: better_estimation}
 Assume that $m\gtrsim d$.
 Assume that $\eta=(\eta_1,\ldots,\eta_m)\in \R^m$ satisfies
\begin{equation*}\label{eq:etacon}
 \left|\sum_{j=1}^m\eta_j\right|\lesssim  \sqrt{m\log m}\qquad\text{and}\qquad \sum_{j=1}^m \eta_j^2\lesssim m.
\end{equation*}
 Under the assumption of Theorem \ref{th:upbound} about $\va_j\in \mathbb{C}^d, j=1,\ldots,m$, the following holds with probability at least $1-2 \exp(-c_0m/4)$:
for all $\vx_0\in \mathbb{C}^d$, the solution  $\widehat{\vx}$ to (\ref{eqn: nonsparse_model}) with
$\mathbb{F}=\mathbb{C}$ and $b_j=\abs{\innerp{\va_j,\vx_0}}+\eta_j$, $j=1,\ldots,m$, satisfies
 \begin{equation}\label{eqn: better_estimation_new}
 \min_{\theta\in[0,2\pi)}\|\widehat{\vx}-\exp(\mathrm{i}\theta)\vx_0\|_2\leq C_0 \max\left\{\|\vx_0\|_2,\sqrt{\frac{d\log m}{m}}\right\}\cdot \sqrt{\frac{d\log m}{m}}.
 \end{equation}
Here, $c_0$  and $C_0$ are  positive constants depending on $C_{\psi}$ and $\gamma$.
\end{theorem}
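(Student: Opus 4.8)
The plan is to reopen the proof of Theorem~\ref{th:upbound} and, using the two hypotheses on $\ve$, sharpen the bound on the noise cross-term that appears in its last step. Fix $\vx_0\in\mathbb{C}^d$ and $\ve\in\R^m$, let $\theta$ attain $\min_{\theta'}\|\widehat{\vx}-e^{\mathrm{i}\theta'}\vx_0\|_2$, and set $\vh:=\widehat{\vx}-e^{\mathrm{i}\theta}\vx_0$ (so $\|\vh\|_2$ is the quantity to bound) and $g_j:=|\langle\va_j,\widehat{\vx}\rangle|-|\langle\va_j,\vx_0\rangle|$. Minimality of $\widehat{\vx}$ for (\ref{eqn: nonsparse_model}) gives the elementary inequality $\sum_j g_j^2\le 2\sum_j g_j\eta_j$, while the uniform amplitude-stability estimate underlying Theorem~\ref{th:upbound} (which rests on Theorem~\ref{thm: SRIP}) gives, on the same event of probability $1-2\exp(-c_0m/4)$, the matching lower bound $\sum_j g_j^2\gtrsim m\|\vh\|_2^2$. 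Hence everything reduces to showing that $\sum_j g_j\eta_j$ is genuinely smaller than the crude bound $\|\vg\|_2\|\ve\|_2$ that sufficed for Theorem~\ref{th:upbound}.

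The key step is to split $g_j$ into a first-order and a second-order part: with $\phi_j:=\mathrm{sign}(\langle\va_j,\vx_0\rangle)$ and $\ell_j:=\mathrm{Re}(\overline{e^{\mathrm{i}\theta}\phi_j}\,\langle\va_j,\vh\rangle)$, convexity of the modulus yields $g_j=\ell_j+s_j$ with $0\le s_j\lesssim\min\{|\langle\va_j,\vh\rangle|,\,|\langle\va_j,\vh\rangle|^2/|\langle\va_j,\vx_0\rangle|\}$. For the first-order part, $\bigl|\sum_j\ell_j\eta_j\bigr|\le\|\vw\|_2\|\vh\|_2$ with $\vw:=\sum_j\eta_j\phi_j\va_j$, and I would then decompose $\vw=\mathbb{E}\vw+(\vw-\mathbb{E}\vw)$: since $\mathbb{E}[\mathrm{sign}(\langle\va,\vx_0\rangle)\va]$ is a fixed vector (of norm at most $\sqrt d$), the hypothesis $\bigl|\sum_j\eta_j\bigr|\lesssim\sqrt{m\log m}$ forces $\|\mathbb{E}\vw\|_2\lesssim\sqrt{dm\log m}$, while $\vw-\mathbb{E}\vw$ is a sum of $m$ independent centered sub-exponential vectors, so a concentration inequality for norms of such sums together with $\sum_j\eta_j^2\lesssim m$ gives $\|\vw-\mathbb{E}\vw\|_2\lesssim\sqrt{dm}\cdot\mathrm{polylog}(m)$; altogether $\bigl|\sum_j\ell_j\eta_j\bigr|\lesssim\sqrt{dm\log m}\,\|\vh\|_2$. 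This is where the "smallness of $\sum_j\eta_j$" is actually consumed.

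For the second-order part I would first record the a~priori magnitude bound $\|\widehat{\vx}\|_2\lesssim\|\vx_0\|_2+r$, $r:=\sqrt{d\log m/m}$, obtained by inserting $\widehat{\vx}$ directly into the minimality inequality and using $\sum_j\va_j\va_j^*\asymp mI$ together with the same estimate on $\vw$; this already yields $\|\vh\|_2\lesssim\|\vx_0\|_2+r$. Then, using $s_j\lesssim|\langle\va_j,\vh\rangle|^2/|\langle\va_j,\vx_0\rangle|$ on the indices where $|\langle\va_j,\vx_0\rangle|$ is of typical size and $s_j\lesssim|\langle\va_j,\vh\rangle|$ on the atypical ones, a fourth-moment RIP bound for $\sum_j|\langle\va_j,\vh\rangle|^4$, and the strong RIP of Theorem~\ref{thm: SRIP} to control how often $|\langle\va_j,\vx_0\rangle|$ is small — all uniformly in $\vh$ via a covering argument — one bounds $\sum_j s_j|\eta_j|$ by a term that is $o(m\|\vh\|_2^2)$ once $\|\vx_0\|_2\gtrsim r$, plus lower-order contributions. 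Feeding all of this into $\sum_j g_j^2\le 2\sum_j g_j\eta_j$ produces, on the good event, an inequality of the schematic form
\[
m\|\vh\|_2^2\;\lesssim\;\sqrt{dm\log m}\,\|\vh\|_2\;+\;\varepsilon\,m\|\vh\|_2^2 ,
\]
with $\varepsilon=\varepsilon(\|\vx_0\|_2,r)<\tfrac12$ after tuning the net size and the threshold separating typical from atypical indices; absorbing the last term gives $\|\vh\|_2\lesssim r$, and tracking the $\|\vx_0\|_2$-dependence of the second-order term (it scales like $\|\vh\|_2^2/\max\{\|\vx_0\|_2,r\}$, and in the range $\|\vx_0\|_2<r$ the a~priori bound supplies the missing factor) upgrades this to $\min_\theta\|\widehat{\vx}-e^{\mathrm{i}\theta}\vx_0\|_2\lesssim\max\{\|\vx_0\|_2,r\}\cdot r$. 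A union bound over the finitely many concentration events yields the stated probability.

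The hard part will be the second-order term $\sum_j s_j\eta_j$: the naive estimate $(\sum_j s_j^2)^{1/2}\|\ve\|_2$ only recovers Theorem~\ref{th:upbound}, so one must genuinely exploit that $s_j$ is quadratic in $\langle\va_j,\vh\rangle$, and this is delicate precisely because $|\langle\va_j,\vx_0\rangle|$ can be atypically small for a nonzero fraction of $j$ while the noise is controlled only in $\ell_2$ (an adversary can place its $\ell_2$-mass on exactly those indices). Quantifying, uniformly over $\vh$ on the sphere, how the small values of $|\langle\va_j,\vx_0\rangle|$ interact with an adversarially placed $\ell_2$-bounded noise is exactly where the strong restricted-isometry property of Theorem~\ref{thm: SRIP} — now applied to $\eta$-weighted quadratic forms rather than to $\|\mathcal{A}(\cdot)\|$ alone — does the work.
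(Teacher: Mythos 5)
Your opening coincides with the paper's: minimality gives $\sum_j g_j^2\le 2\sum_j g_j\eta_j$ with $g_j=|\langle\va_j,\widehat{\vx}\rangle|-|\langle\va_j,\vx_0\rangle|$, and the strong restricted isometry property of Theorem \ref{thm: SRIP}, applied on the index set $I$ of Lemma \ref{lem: non_sparse}, yields $\sum_j g_j^2\gtrsim m\min_\theta\|\widehat{\vx}-e^{\mathrm{i}\theta}\vx_0\|_2^2$. But from there the paper does something much blunter than what you propose: it bounds $\sum_j g_j\eta_j\le\bigl|\sum_j\eta_j|\langle\va_j,\widehat{\vx}\rangle|\bigr|+\bigl|\sum_j\eta_j|\langle\va_j,\vx_0\rangle|\bigr|$ and proves a single uniform-over-the-sphere estimate (Lemma \ref{lem: upper_new}), $\bigl|\sum_j\eta_j|\langle\va_j,\vx\rangle|\bigr|\lesssim\sqrt{d\log m}\,\|\ve\|_2+\bigl|\sum_j\eta_j\bigr|$, via Hoeffding plus a $1/\sqrt m$-net; the hypothesis on $\sum_j\eta_j$ is consumed only in the deterministic mean term $(\sum_j\eta_j)\,\mathbb{E}|\langle\va_1,\vx\rangle|$. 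The price is a right-hand side proportional to $\|\widehat{\vx}\|_2+\|\vx_0\|_2$ rather than to $\|\vh\|_2$, and the factor $\max\{\|\vx_0\|_2,\sqrt{d\log m/m}\}$ in the conclusion comes from the elementary implication $(a-b)^2\le l(a+b)\Rightarrow a\le 3b+l$ used to eliminate $\|\widehat{\vx}\|_2$ and back-substitute. (Note the paper's proof actually ends with a bound on the \emph{squared} distance; your target $\|\vh\|_2\lesssim\sqrt{d\log m/m}$ is strictly stronger than that in the regime $\|\vx_0\|_2\gg\sqrt{d\log m/m}$, and nothing in the paper supports it.)

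The genuine gap is the second-order term $\sum_j s_j\eta_j$, which you correctly flag as the hard part but do not close, and the tools you name do not suffice. First, the fourth-moment route fails uniformly in $\vh$ when $m\asymp d$: taking $\vh=\va_1/\|\va_1\|_2$ gives $\sum_j|\langle\va_j,\vh\rangle|^4\ge\|\va_1\|_2^4\asymp d^2\asymp m^2$, so $\|\ve\|_2\bigl(\sum_j|\langle\va_j,\vh\rangle|^4\bigr)^{1/2}/\|\vx_0\|_2$ can reach order $m^{3/2}\|\vh\|_2^2/\|\vx_0\|_2$, which cannot be absorbed into $m\|\vh\|_2^2$. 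Second, and more fundamentally, on the typical indices the second-order term genuinely scales like $\|\vh\|_2^2/\|\vx_0\|_2$ per measurement: for balanced noise $\eta_j=\pm1$ the relevant quantity is a signed chaos whose supremum over the sphere is governed by $\|\sum_j\eta_j\va_j\va_j^*\|$, which is $\gtrsim d\asymp m$ when $m\asymp d$; hence $\sup_{\vh}\sum_j s_j\eta_j\gtrsim c\,m\|\vh\|_2^2/\|\vx_0\|_2$, the same order as the left-hand side you wish to dominate. Your proposed absorption therefore requires $\max\{\|\vx_0\|_2,\sqrt{d\log m/m}\}$ to exceed an absolute constant, which is not implied by any hypothesis, and since $\widehat{\vx}$ (hence $\vh$) depends on the data you cannot evade this by fixing $\vh$. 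The paper's splitting sidesteps the issue entirely by never linearizing; as written, your proposal is not a proof of the stated theorem, and the part it leaves open is exactly the part that is false at the level of generality you need.
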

\begin{remark}
Assume  $\ve=(\eta_1,\ldots,\eta_m)\in \R^m$ with $\eta_j$ independently drawn from $\mathcal{N}(0,\sigma^2)$.
 Taking $\mu=0$, $t_1=\sqrt{m\log m/c}$ and $t_2=\sqrt{m/c}$ in (\ref{eq:t1t2}), we obtain that
\begin{equation}\label{eqn: upper_eta_m0}
 \left|\sum_{j=1}^m\eta_j\right|\lesssim \sigma \sqrt{m\log m}\qquad\text{and}\qquad \sum_{j=1}^m \eta_j^2\lesssim \sigma^2m,
\end{equation}
hold with probability at least $1-2/m-2\exp(-m)$.
Combining Theorem \ref{thm: better_estimation} and (\ref{eqn: upper_eta_m0}), the following holds
\begin{equation}\label{eqn: gauetabetter_estimation_new}
 \min_{\theta\in[0,2\pi)}\|\widehat{\vx}-\exp(\mathrm{i}\theta)\vx_0\|_2\leq C_0 \max\left\{\|\vx_0\|_2,\sqrt{\frac{d\log m}{m}}\sigma\right\}\cdot \sqrt{\frac{d\log m}{m}}\sigma
 \end{equation}
 with probability at least
 \begin{equation}\label{eqn: better_probability}
 1-2\exp(-c_0m/4)-2/m.
 \end{equation}
As $m$ approaches infinity, the error bound in  (\ref{eqn: gauetabetter_estimation_new}) converges to zero. In the context of zero-mean Gaussian noise, the error estimation in (\ref{eqn: gauetabetter_estimation_new}) significantly outperforms the result in Theorem \ref{th:upbound}, which states that
$
 \min_{\theta\in[0,2\pi)}\|\widehat{\vx}-\exp(\mathrm{i}\theta)\vx_0\|_2\lesssim \frac{\|\boldsymbol{\eta}\|_2}{\sqrt{m}} \thickapprox \sigma.
$
However, the probability bound (\ref{eqn: better_probability})  is significantly smaller than the value of $1-2 \exp(-c_0m/4)$ as stated in Theorem \ref{th:upbound}.
The question of whether the bound in Theorem \ref{thm: better_estimation} is tight or not is a topic that will be explored in future research.
 It is worth noting that the error estimation associated with the intensity-based model, which takes into account zero-mean Gaussian noise, has been  studied in \cite{T. Cai, subgaussian}.
\end{remark}
}
We next turn to the complex sparse phase retrieval problem. The vector $\vx_0\in \mathbb{C}^d$ is called {\em $k$-sparse}, if there are at most $k$ nonzero elements in $\vx_0$. The performance of constrained $\ell_1$ model (\ref{eqn: sparse_model}) is presented  in
the following theorem.
\begin{theorem}\label{thm: sparse_result}
Assume that     $m\gtrsim k\log(ed/k)$.
Under the assumption of Theorem \ref{th:upbound} about $\va_j\in \mathbb{C}^d, j=1,\ldots,m$,
 the following holds with probability at least $1-2 \exp(-c_0m/4)$: for all the $k$-sparse vector $\vx_0\in \mathbb{C}^d$ and $\ve=(\eta_1,\ldots,\eta_m)\in \R^m$ with $\|\boldsymbol{\eta}\|_2\leq \epsilon$, the solution  $\widehat{\vx}$ to (\ref{eqn: sparse_model}) with
$\mathbb{F}=\mathbb{C}$ and $b_j:=\abs{\innerp{\va_j,\vx_0}}+\eta_j,$ $j=1,\ldots,m$ satisfies
 \begin{equation}\label{eqn: sparse_error}
 \min_{\theta\in[0,2\pi)}\|\widehat{\vx}-\exp(\mathrm{i}\theta)\vx_0\|_2\,\,\,\lesssim\,\,\, \frac{\epsilon}{\sqrt{m}}.
 \end{equation}
Here, $c_0$ is the same constant as that in Theorem \ref{th:upbound}.
 \end{theorem}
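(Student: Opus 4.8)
The plan is to follow the argument for Theorem~\ref{th:upbound} almost verbatim, changing only two ingredients. First, the first‑order information on $\widehat{\vx}$ now comes from the $\ell_1$ constraint problem (\ref{eqn: sparse_model}) rather than from an unconstrained least‑squares fit, and it yields an \emph{approximate‑sparsity} (cone) condition on the error. Second, every place where the proof of Theorem~\ref{th:upbound} invokes a dense concentration/isometry estimate is replaced by the strong restricted isometry property of Theorem~\ref{thm: SRIP} for simultaneously low‑rank and sparse matrices, together with the companion block‑isometry estimates for the sub‑Gaussian ensemble; all of these hold once $m\gtrsim k\log(ed/k)$, on an event of probability at least $1-2\exp(-c_0m/4)$.

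\textbf{Step 1 (feasibility and cone condition).} Since $\|\boldsymbol{\eta}\|_2\le\epsilon$, the target $\vx_0$ is feasible for (\ref{eqn: sparse_model}), so $\|\widehat{\vx}\|_1\le\|\vx_0\|_1$. Choose $\theta^*\in\argmin{\theta\in[0,2\pi)}\|\widehat{\vx}-\exp(\mathrm{i}\theta)\vx_0\|_2$ and set $\vz:=\exp(-\mathrm{i}\theta^*)\widehat{\vx}$, $\vh:=\vz-\vx_0$; then $\|\vz\|_1=\|\widehat{\vx}\|_1\le\|\vx_0\|_1$ and $\|\vh\|_2=\min_{\theta}\|\widehat{\vx}-\exp(\mathrm{i}\theta)\vx_0\|_2$. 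With $S:=\supp(\vx_0)$, $|S|\le k$, the inequality $\|\vx_0\|_1\ge\|\vx_0+\vh\|_1\ge\|\vx_0\|_1-\|\vh_S\|_1+\|\vh_{S^c}\|_1$ gives $\|\vh_{S^c}\|_1\le\|\vh_S\|_1$. Partitioning $S^c$ into consecutive blocks $T_1,T_2,\dots$ of size $k$ in order of decreasing magnitude, the standard estimate yields $\sum_{l\ge 2}\|\vh_{T_l}\|_2\le\|\vh_S\|_1/\sqrt{k}\le\|\vh_S\|_2\le\|\vh\|_2$ and $\|\vh\|_1\le 2\sqrt{k}\,\|\vh\|_2$, so $\vh$ lies in the usual $\ell_1$ cone of effectively $2k$‑sparse vectors.

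\textbf{Step 2 (data fidelity) and Step 3 (uniform lower bound).} Since $\widehat{\vx}$ is feasible and $b_j=|\langle\va_j,\vx_0\rangle|+\eta_j$ with $\|\boldsymbol{\eta}\|_2\le\epsilon$, the triangle inequality in $\R^m$ and the phase‑invariance $|\langle\va_j,\widehat{\vx}\rangle|=|\langle\va_j,\vz\rangle|$ give
\[
\Big(\sum_{j=1}^m\big(|\langle\va_j,\vz\rangle|-|\langle\va_j,\vx_0\rangle|\big)^2\Big)^{1/2}\le\Big(\sum_{j=1}^m\big(|\langle\va_j,\widehat{\vx}\rangle|-b_j\big)^2\Big)^{1/2}+\|\boldsymbol{\eta}\|_2\le 2\epsilon .
\]
The heart of the proof is the sparse counterpart of the estimate behind Theorem~\ref{th:upbound}: on the event of Theorem~\ref{thm: SRIP}, for every $k$‑sparse $\vx_0$ and every $\vh$ with $\|\vh_{S^c}\|_1\le\|\vh_S\|_1$,
\[
\sum_{j=1}^m\big(|\langle\va_j,\vx_0+\vh\rangle|-|\langle\va_j,\vx_0\rangle|\big)^2\gtrsim m\,\|\vh\|_2^2 .
\]
This is obtained by lifting: after discarding an atypical set of indices, the amplitude discrepancy is compared with the quadratic form $\va_j^*(\widehat{\vx}\widehat{\vx}^*-\vx_0\vx_0^*)\va_j$ of the Hermitian, rank‑$\le 2$ matrix $\widehat{\vx}\widehat{\vx}^*-\vx_0\vx_0^*=\vx_0\vh^*+\vh\vx_0^*+\vh\vh^*$; by Step~1 this matrix splits into an $\mathcal{O}(k)\times\mathcal{O}(k)$ principal block, to which Theorem~\ref{thm: SRIP} applies, plus a tail whose contribution (including its cross terms with the block) is dominated using $\sum_{l\ge 2}\|\vh_{T_l}\|_2\le\|\vh\|_2$ together with the block restricted isometry of the sub‑Gaussian ensemble. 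Combining the two displays, $c\,m\,\|\vh\|_2^2\le 4\epsilon^2$, hence $\min_{\theta}\|\widehat{\vx}-\exp(\mathrm{i}\theta)\vx_0\|_2=\|\vh\|_2\lesssim\epsilon/\sqrt{m}$, which is (\ref{eqn: sparse_error}).

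The main obstacle is the lower bound in Step~3. For matrices that are exactly low‑rank and sparse it is precisely Theorem~\ref{thm: SRIP}; upgrading it to the full $\ell_1$‑cone — where $\supp(\vh)$ may be all of $\{1,\dots,d\}$ — is what forces the \emph{strong}, cone‑robust form of the property and requires careful control of the interaction between the sparse head of $\widehat{\vx}\widehat{\vx}^*-\vx_0\vx_0^*$ and its tail. It is also here that the amplitude structure is indispensable and the complex case genuinely differs from the real one: the amplitude discrepancy $\big||\langle\va_j,\vz\rangle|-|\langle\va_j,\vx_0\rangle|\big|$ is $\mathcal{O}(\|\vh\|_2)$, so the estimate is scale‑free (no spurious $\|\vx_0\|_2$ factor), whereas in the real case the two‑valued ${\rm sign}(\langle\va_j,\vx\rangle)\in\{\pm1\}$ permits a union bound over sign patterns (as in \cite{sparse2}) that has no analogue on the complex unit circle.
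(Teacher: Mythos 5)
Your plan follows the paper's own route: feasibility of $\vx_0$ gives $\|\widehat{\vx}\|_1\le\|\vx_0\|_1$ and hence a cone condition; the amplitude discrepancy is lifted to $\vH:=\widehat{\vx}\widehat{\vx}^*-\vx_0\vx_0^*$; and the strong restricted isometry property of Theorem \ref{thm: SRIP} is applied to a sparse principal block of $\vH$ while the tail is absorbed via the cone condition (the paper packages the data-fidelity and denominator estimates as Lemma \ref{lem: sparse} and then runs the head--tail decomposition inside the proof of Theorem \ref{thm: sparse_result}). Your Steps 1 and 2 are correct as written, and your squared reformulation of the final comparison is equivalent to the paper's $\ell_1$ version.

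The gap is in Step 3, precisely where you say the tail ``is dominated using $\sum_{l\ge2}\|\vh_{T_l}\|_2\le\|\vh\|_2$.'' With blocks of size exactly $k$, the best such an argument yields is a lower bound of the form $C_-(1-\beta_0)\|\widetilde{\vH}\|_F-cC_+\|\widetilde{\vH}\|_F$ with $c$ an absolute constant of order $5$, where $\widetilde{\vH}$ is the principal block; since the constants of Theorem \ref{thm: SRIP} satisfy only $C_-\le C_+$ (and $C_+/C_-$ can be large for sub-Gaussian ensembles), this difference need not be positive and the argument does not close. The paper's fix is to partition $T_0^c$ into blocks of size $ak$ and apply Theorem \ref{thm: SRIP} at sparsity level $(a+1)k$ (still $m\gtrsim k\log(ed/k)$), which turns the tail bound into $C_+\left(4/\sqrt{a}+1/a\right)\|\widetilde{\vH}\|_F$ and allows choosing $a$ so that $C_-(1-\beta_0)-4C_+/\sqrt{a}-C_+/a>0$; this is exactly condition (\ref{eqn: a_condition}). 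A second, smaller omission: the uniform bound $\abs{\innerp{\va_j,\widehat{\vx}}}+\abs{\innerp{\va_j,\vx_0}}\lesssim\|\widehat{\vx}\|_2+\|\vx_0\|_2$ for $j\in I$, which you need both to pass from amplitude discrepancies to $\innerp{\va_j\va_j^*,\vH}$ and to make your ``$\gtrsim m\|\vh\|_2^2$'' bound scale-free, cannot be read off Theorem \ref{thm: RIP} directly because $\widehat{\vx}$ is not sparse; it requires its own head--tail decomposition of $\vx_0\vx_0^*+\widehat{\vx}\widehat{\vx}^*$, which is (\ref{eqn: upper_sparse_temp2}) in Lemma \ref{lem: sparse}. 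Both repairs are standard, but they are precisely the technical content your sketch leaves out.
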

{ {
 \begin{remark}
 {In \cite{HSX}, the authors \cite{HSX} provide a comprehensive discussion on the amplitude-based model in the context of real signals and the intensity-based model in the context of complex signals, specifically focusing on the scenario of sparse signals. The model in \cite{HSX} has some non-zero additional bias. More explicitly,  they presented the performance of the following $\ell_1$ minimization program:
\begin{equation}\label{eqn: affine}
\arg\min_{\vx\in \mathbb{R}^d}\ \|\vx\|_1\qquad \text{s.t.}\ \||\boldsymbol{A}\vx+\vw|-\vb\|_2\leq \epsilon.
\end{equation}
Here $\vw$ is a non-zero vector satisfying $\alpha<\|\vw\|_2< \beta$ with $\alpha,\beta>0$, and $\vb=|\vA\vx_0+\vw|+\boldsymbol{\eta}$ with $\|\boldsymbol{\eta}\|_2\leq \epsilon$. In the real case, the solution $\widehat{\vx}$ to (\ref{eqn: affine}) obeys
\[
\|\widehat{\vx}-\vx_0\|_2\lesssim \frac{\epsilon}{\sqrt{m}}.
\]
Compared with the result in (\ref{eq:upb}), the appearance of the non-zero bias vector $\vw$ helps to resolve the phase ambiguity in sparse phase retrieval problems. However, when the bias vector is set to zero, it fails to meet the conditions required in \cite{HSX}. As a result, the findings in \cite{HSX} cannot be directly applied to the phase retrieval problem discussed in our paper.}
\end{remark}
\begin{remark}
An intriguing question that emerges is the feasibility of replacing the term ${\epsilon}/{\sqrt{m}}$ on the right-hand side of equation (\ref{eqn: sparse_error}) with ${\epsilon}/{m^\delta}$, where $\delta$ is a fixed constant greater than $1/2$. Indeed, there exist $\vx_0\in \mathbb{C}^d$, $\boldsymbol{\eta}\in \mathbb{R}^n$ and $\epsilon>0$ such that $\min_{\theta\in[0,2\pi)}\|\widehat{\vx}-\exp(\mathrm{i}\theta)\vx_0\|_2\gtrsim\frac{\epsilon}{\sqrt{m}}$ holds with probability at least $1/2$. This demonstrates that the error bound given in (\ref{eqn: sparse_error}) is sharp up to a constant.  For instance, take $\vx_0=(1,0,\ldots, 0)$, $\boldsymbol{\eta}=(1,\ldots, 1)$ and $\epsilon=\sqrt{8m}$.
 Combining Markov inequality and
 \[
 \mathbb{E}\sum_{j=1}^m b_j^2=\mathbb{E}(|a_{j,1}|+1)^2=\sum_{j=1}^m\mathbb{E}(|a_{j,1}|^2+2|a_{j,1}|+1)\leq 4m,
 \]
 we obtain that
 \begin{equation}\label{eq:l10}
 \mathbb{P}(\sum_{j=1}^m\left(|\langle \va_j,\boldsymbol{0}\rangle|-b_j\right)^2> \epsilon)=\mathbb{P}\big(\sum_{j=1}^m b_j^2> \epsilon^2\big)\leq \frac{ \mathbb{E}\sum_{j=1}^m b_j^2}{\epsilon^2}\leq \frac{4m}{8m}=\frac{1}{2},
 \end{equation}
  which implies the solution  $\widehat{\vx}$ to (\ref{eqn: sparse_model}) is $\widehat{\vx}=\boldsymbol{0}$ with probability at least $1/2$. Thus
\[
\min_{\theta\in[0,2\pi)}\|\widehat{\vx}-\exp(\mathrm{i}\theta)\vx_0\|_2\,\,\,\gtrsim\,\,\, \frac{\epsilon}{\sqrt{m}}
\]
holds with probability at least $1/2$.
\end{remark}

\section{Proof of Theorem \ref{th:upbound}}
Assume that $\va_j\in \mathbb{C}^d,$  $j=1,\ldots,m$.
Let $\cA: \mathbb{H}^{d\times d}\rightarrow \mathbb{C}^m$ be a linear operator corresponding to $\{\va_j\}_{j=1}^m$, which is defined as
\begin{equation}\label{eqn: A()}
\cA(\vX)\,\,=\,\,(\va_1^*\vX\va_1,\ldots, \va_m^*\vX\va_m),
\end{equation}
where $\vX\in \mathbb{H}^{d\times d}$.  Additionally, for $I\subset \{1,\ldots,m\}$, let
$\cA_I: \mathbb{H}^{d\times d}\rightarrow \mathbb{C}^{\# I}$ be a linear operator corresponding to $\{\va_j\}_{j\in I}$ which is defined as
\[
\cA_I(\vX)\,\,=\,\,(\va_j^*\vX\va_j)_{j\in I}.
\]
We introduce the following results, which are helpful for the proofs.
\begin{lemma}\cite[Lemma 3.2]{XiaXu}\label{Lemma 3.2}
If $\vx,\vy\in \mathbb{C}^d$, and $\langle \vx, \vy\rangle\in \mathbb{R}$ with $\langle \vx, \vy\rangle\geq 0$, then
\[
\|\vx\vx^{*}-\vy\vy^{*}\|_F^2\geq \frac{1}{2}\cdot \|\vx\|_2^2 \cdot \|\vx-\vy\|_2^2.
\]
\end{lemma}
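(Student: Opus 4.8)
The plan is to turn the matrix inequality into a one-variable polynomial inequality and then dispatch it by a concavity argument. The case $\vx=\boldsymbol{0}$ is trivial (both sides are $0$), so assume $\vx\neq\boldsymbol{0}$. First I would expand the left-hand side using $\langle\vx\vx^*,\vy\vy^*\rangle=|\langle\vx,\vy\rangle|^2$, which gives $\|\vx\vx^*-\vy\vy^*\|_F^2=\|\vx\|_2^4+\|\vy\|_2^4-2|\langle\vx,\vy\rangle|^2$, and expand $\|\vx-\vy\|_2^2=\|\vx\|_2^2+\|\vy\|_2^2-2\langle\vx,\vy\rangle$, where the cross term is already real by hypothesis. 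Writing $a=\|\vx\|_2$, $b=\|\vy\|_2$ and $c=\langle\vx,\vy\rangle$, which satisfies $0\le c\le ab$ by Cauchy--Schwarz, the claim is equivalent to the scalar inequality
\[
\Phi(c):=a^4+b^4-2c^2-\tfrac12 a^2\bigl(a^2+b^2-2c\bigr)\;\ge\;0\qquad\text{for }c\in[0,ab].
\]

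Next I would note that $\Phi$ is a concave quadratic in $c$ (the coefficient of $c^2$ is $-2$), so its minimum over $[0,ab]$ is attained at one of the endpoints; hence it suffices to check $\Phi(0)\ge 0$ and $\Phi(ab)\ge 0$. At $c=0$ one has $\Phi(0)=\tfrac12 a^4-\tfrac12 a^2b^2+b^4$, which, regarded as a quadratic in $a^2$ with positive leading coefficient, has negative discriminant $\tfrac14 b^4-2b^4<0$ and is therefore nonnegative. At $c=ab$ one computes $\Phi(ab)=\tfrac12 a^4+a^3b-\tfrac52 a^2b^2+b^4$; if $b>0$, setting $t=a/b$ (the case $b=0$ forces $c=0$ and is covered above) this equals $\tfrac12 b^4\bigl(t^4+2t^3-5t^2+2\bigr)=\tfrac12 b^4(t-1)^2(t^2+4t+2)$, which is nonnegative because $t^2+4t+2>0$ for $t\ge 0$. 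This completes the argument.

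The one nonroutine ingredient is the final factorization. Since equality in the lemma should hold when $\vx=\vy$, i.e. at $t=1$, it is natural to anticipate that $(t-1)$ divides $t^4+2t^3-5t^2+2$; polynomial division then reveals it divides \emph{twice}, leaving the manifestly positive quadratic factor $t^2+4t+2$. I expect this endpoint bookkeeping to be the main (and essentially only) obstacle, and it is elementary. As an alternative that sidesteps the factorization, one can instead write $\vy=\alpha\vx+\vz$ with $\vz\perp\vx$ and $\alpha\ge 0$ real (possible because $\langle\vx,\vy\rangle\ge 0$); the three rank-one matrices $\vx\vx^*$, $\vx\vz^*+\vz\vx^*$, $\vz\vz^*$ are mutually Frobenius-orthogonal, so both sides become explicit quadratics in $\|\vx\|_2^2$, $\|\vz\|_2^2$ and the parameter $\alpha$, and the inequality can then be closed by splitting into the cases $\alpha\ge\tfrac12$ (all coefficients nonnegative) and $0\le\alpha<\tfrac12$ (a short discriminant check).
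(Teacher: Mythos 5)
The paper does not prove this lemma at all: it is imported verbatim as \cite[Lemma 3.2]{XiaXu}, so there is no internal proof to compare against. Your argument is correct and gives a self-contained elementary replacement for that citation. All the steps check out: the expansion $\|\vx\vx^*-\vy\vy^*\|_F^2=\|\vx\|_2^4+\|\vy\|_2^4-2|\langle\vx,\vy\rangle|^2$ is the standard trace identity; the reduction to $\Phi(c)\ge 0$ on $[0,ab]$ is legitimate, and since $\Phi$ is a concave quadratic in $c$ its minimum on a compact interval is indeed attained at an endpoint; $\Phi(0)=\tfrac12 a^4-\tfrac12 a^2b^2+b^4$ has discriminant $-\tfrac74 b^4<0$ as a quadratic in $a^2$; and the factorization $t^4+2t^3-5t^2+2=(t-1)^2(t^2+4t+2)$ is verified by direct expansion, with $t^2+4t+2>0$ for $t\ge 0$. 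The degenerate cases $\vx=\boldsymbol{0}$ and $b=0$ are handled. One stylistic remark: your alternative sketch (writing $\vy=\alpha\vx+\vz$ with $\vz\perp\vx$ and $\alpha\ge 0$, then using the mutual Frobenius-orthogonality of $\vx\vx^*$, $\vx\vz^*+\vz\vx^*$, $\vz\vz^*$) is closer in spirit to how such lemmas are usually proved in the phase-retrieval literature, but the endpoint-plus-concavity route you carried out in full is equally valid and arguably cleaner, since it isolates the only nontrivial point (the boundary case $c=ab$, i.e.\ $\vy$ a nonnegative multiple of $\vx$) and resolves it by an explicit factorization whose double root at $t=1$ correctly reflects the equality case $\vx=\vy$.
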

 {The following lemma is based on   the H$\ddot{\text{o}}$lder's inequality and
Proposition 2.6.1 in \cite{Vershynin}.
  The H$\ddot{\text{o}}$lder's inequality states that for any $p\geq 1$ and $q\geq 1$ satisfying $\frac{1}{p}+\frac{1}{q}=1$, we have $\mathbb{E}|xy|\leq (\mathbb{E}|x|^p)^{1/p}(\mathbb{E}|y|^q)^{1/q}$.}
\begin{lemma}
\begin{enumerate}[(i)]
\item\,\,  For any random variable $z$,  we have
\begin{equation}\label{eqn: lower_exp_z}
\mathbb{E}|z|^2\leq (\mathbb{E}|z|)^{2/3}\cdot (\mathbb{E}|z|^4)^{1/3}.
\end{equation}
\item\,\,  Assume that $\va=(a_1,\ldots,a_d)\in \mathbb{C}^{d}$ is a random vector, whose elements are independently drawn from some sub-Gaussian random variable $\xi$ with $\|\xi\|_{\psi_2}\leq C_{\psi}$. Then for any fixed $\vz=(z_1,\ldots, z_d)$ with $\|\vz\|_2=1$, we have
\begin{equation}\label{eqn: vector_subGaussian}
\||\langle\va,\vz\rangle|\|_{\psi_2}^2=\left\|\Big|\sum_{l=1}^{d}a_{l}z_l\Big|\right\|_{\psi_2}^2\leq C'\sum_{l=1}^{d} |z_{l}|^2\|a_{l}\|_{\psi_2}^2\leq C'\cdot C_{\psi}^2,
\end{equation}
where $C'$ is an absolute constant.
\end{enumerate}
\end{lemma}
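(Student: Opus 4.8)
The plan is to handle the two parts of the lemma separately, each by a short application of a classical inequality.

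For part~(i) I would split the exponent and apply H\"older's inequality. Writing $|z|^{2}=|z|^{2/3}\cdot|z|^{4/3}$ and using H\"older with the conjugate pair $p=3/2$, $q=3$ (so that $\tfrac1p+\tfrac1q=1$), one obtains
\[
\mathbb{E}|z|^{2}\;=\;\mathbb{E}\bigl(|z|^{2/3}\cdot|z|^{4/3}\bigr)\;\le\;\bigl(\mathbb{E}|z|^{(2/3)\cdot(3/2)}\bigr)^{2/3}\bigl(\mathbb{E}|z|^{(4/3)\cdot 3}\bigr)^{1/3}\;=\;(\mathbb{E}|z|)^{2/3}\,(\mathbb{E}|z|^{4})^{1/3},
\]
which is exactly the asserted bound. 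Equivalently, this is the log-convexity of $p\mapsto\mathbb{E}|z|^{p}$, i.e.\ $L^{p}$-interpolation of the $L^{2}$ norm between $L^{1}$ and $L^{4}$ with weight $\theta=1/3$. Nothing further is needed here.

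For part~(ii) the strategy is to realize $\langle\va,\vz\rangle=\sum_{l=1}^{d}a_{l}z_{l}$ as a sum of independent, mean-zero, sub-Gaussian summands and then invoke the sub-Gaussian summation bound $\|\sum_{l}X_{l}\|_{\psi_2}^{2}\le C\sum_{l}\|X_{l}\|_{\psi_2}^{2}$ (Proposition~2.6.1 in \cite{Vershynin}). First, $\||\langle\va,\vz\rangle|\|_{\psi_2}=\|\langle\va,\vz\rangle\|_{\psi_2}$ since $|\cdot|$ leaves the moments in the definition (\ref{eqn: C_phi}) unchanged, and that definition is subadditive (apply Minkowski's inequality inside the $\sup_{p\ge1}$) and positively homogeneous in scalar multiples. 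I would then pass to real and imaginary parts, $\|\langle\va,\vz\rangle\|_{\psi_2}\le\|\Real\langle\va,\vz\rangle\|_{\psi_2}+\|\mathfrak{I}\langle\va,\vz\rangle\|_{\psi_2}$, and observe that each summand $\Real(a_{l}z_{l})$ and $\mathfrak{I}(a_{l}z_{l})$ is a real sub-Gaussian random variable with $\psi_2$-norm $\lesssim|z_{l}|\,\|a_{l}\|_{\psi_2}$, which follows by expanding $a_{l}z_{l}$ in the real/imaginary parts of $a_{l}$ and $z_{l}$ and using $\|\Real a_{l}\|_{\psi_2},\|\mathfrak{I}a_{l}\|_{\psi_2}\le\|a_{l}\|_{\psi_2}$. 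Because the $a_{l}$ are independent and mean-zero — here one uses the standing normalization $\mathbb{E}\xi=0$, hence $\mathbb{E}a_{l}=0$ — Proposition~2.6.1 applies to each of the two real sums and gives $\|\Real\langle\va,\vz\rangle\|_{\psi_2}^{2}+\|\mathfrak{I}\langle\va,\vz\rangle\|_{\psi_2}^{2}\lesssim\sum_{l}|z_{l}|^{2}\|a_{l}\|_{\psi_2}^{2}$. Combining, then inserting $\|a_{l}\|_{\psi_2}=\|\xi\|_{\psi_2}\le C_{\psi}$ and $\sum_{l}|z_{l}|^{2}=\|\vz\|_2^{2}=1$, yields $\||\langle\va,\vz\rangle|\|_{\psi_2}^{2}\le C'\sum_{l}|z_{l}|^{2}\|a_{l}\|_{\psi_2}^{2}\le C'C_{\psi}^{2}$ for an absolute constant $C'$.

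Neither step is genuinely hard; the only points that need care are the bookkeeping of the absolute constant $C'$ through the centering and the real/imaginary decomposition, and being explicit that part~(ii) truly relies on the mean-zero normalization of $\xi$ — without it the means $\mathbb{E}(a_{l}z_{l})$ could accumulate to size $\sim\sqrt{d}$ (take $\vz$ proportional to the all-ones vector) and the right-hand side would no longer be an absolute constant. If one prefers to avoid the real/imaginary split, the complex-valued version of Proposition~2.6.1 can be applied directly to the independent mean-zero summands $a_{l}z_{l}$, with the identical conclusion.
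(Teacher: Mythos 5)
Your proposal is correct and follows the same route the paper indicates: part~(i) is the H\"older/log-convexity interpolation of $L^{2}$ between $L^{1}$ and $L^{4}$ with exponents $p=3/2$, $q=3$, and part~(ii) is the sub-Gaussian summation bound of Proposition~2.6.1 in \cite{Vershynin} applied to the independent mean-zero summands $a_{l}z_{l}$ (after the harmless observation that $\|\cdot\|_{\psi_2}$ depends only on the modulus). Your added care about the real/imaginary decomposition and the necessity of the mean-zero normalization is sound but does not change the argument.
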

We next introduce the concentration inequality for sub-Gaussian measurements.   A similar result was obtained for the case where the measurements are Gaussian vectors in \cite{XiaXu} (see \cite[Theorem 1.2]{XiaXu}).
\begin{theorem}\label{thm: RIP}
Assume that $\va_j\in \mathbb{C}^d,$  $j=1,\ldots,m,$ are independently taken as complex sub-Gaussian random vectors, whose elements are independently drawn from some sub-Gaussian random variable $\xi$ with $\|\xi\|_{\psi_2}\leq C_{\psi}$, $\mathbb{E}\xi=0$, $\mathbb{E}\xi^2=0$, $\mathbb{E}|\xi|^2=1$, and $\mathbb{E}|\xi|^4=\gamma>1$.
Assume   that $\mathcal{A}: \mathbb{C}^{d\times d}\rightarrow \mathbb{C}^m$ is  the linear operator corresponding to $\{\va_j\}_{j=1}^m$, which is defined in (\ref{eqn: A()}).
If
$
 m\gtrsim k\log (ed/k),
 $
with probability at least $1-2\exp(-c_0m)$,  $\mathcal{A}$
satisfies the restricted isometry property  on the order of $(2,k)$, i.e.,
\[
C_{-}\|\vX\|_{F}\leq\frac{1}{m}\|\mathcal{A}(\vX)\|_{1}\leq C_{+}\|\vX\|_{F}
\]
holds for all Hermitian $\vX\in \mathbb{C}^{d\times d}$ with $\text{rank}(\vX)\leq 2$ and
$\|\vX\|_{0,2}\leq k$ (also $\|\vX^{*}\|_{0,2}\leq k$).
Here, $c_0,$  $C_{-}$ and $C_+$ are positive absolute constants depending on $C_{\psi}$ and $\gamma$.
\end{theorem}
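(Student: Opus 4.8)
The plan is to run the classical ``metric entropy plus concentration'' scheme for restricted isometries, replacing each place where the analogous Gaussian statement \cite[Theorem~1.2]{XiaXu} uses rotation invariance by an explicit moment computation for the sub-Gaussian entries. Fix a Hermitian $\vX$ with $\rank(\vX)\le 2$ and $\|\vX\|_{0,2}\le k$, and write its spectral decomposition $\vX=\lambda_1\vu_1\vu_1^*+\lambda_2\vu_2\vu_2^*$ with $\lambda_1,\lambda_2\in\R$ and orthonormal $\vu_1,\vu_2\in\C^d$ (the cases $\rank(\vX)\le 1$ follow on setting $\lambda_2=0$). Set $z:=\va^*\vX\va=\lambda_1\abs{\innerp{\va,\vu_1}}^2+\lambda_2\abs{\innerp{\va,\vu_2}}^2\in\R$, so that $\frac1m\norms{\cA(\vX)}_1=\frac1m\sum_{j=1}^m\abs{z_j}$ is an average of i.i.d.\ copies of $\abs{z}$. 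The goal is the two-sided bound $c_-\|\vX\|_F\le\frac1m\norms{\cA(\vX)}_1\le c_+\|\vX\|_F$, with high probability, uniformly over all such $\vX$.

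\textbf{Step 1: two-sided control of $\E\abs{z}$.} Expanding $\E\abs{z}^2$ and bookkeeping the pairings of $\E[a_l\overline{a_p}a_q\overline{a_r}]$ (which, by $\E\xi=\E\xi^2=0$, vanishes unless the indices split into two conjugate pairs, contributing $1$ then and $\gamma$ on the fully diagonal term) gives, for orthonormal unit $\vu_1,\vu_2$,
\[
\E\abs{\innerp{\va,\vu_i}}^4=2+(\gamma-2)\sum_l\abs{u_{i,l}}^4,\qquad
\E\big(\abs{\innerp{\va,\vu_1}}^2\abs{\innerp{\va,\vu_2}}^2\big)=1+(\gamma-2)\sum_l\abs{u_{1,l}}^2\abs{u_{2,l}}^2 .
\]
Combining these, and using $\sum_l\big(\lambda_1\abs{u_{1,l}}^2+\lambda_2\abs{u_{2,l}}^2\big)^2=\|\diag(\vX)\|_F^2$ together with $2\lambda_1^2+2\lambda_2^2+2\lambda_1\lambda_2=\|\vX\|_F^2+(\tr\vX)^2$, I obtain the identity
\[
\E\abs{z}^2=\|\vX\|_F^2+(\tr\vX)^2+(\gamma-2)\,\|\diag(\vX)\|_F^2 .
\]
Since $\vX\mapsto\diag(\vX)$ is an orthogonal projection for the Frobenius inner product, $\|\diag(\vX)\|_F\le\|\vX\|_F$; hence $\E\abs{z}^2\ge\min\{1,\gamma-1\}\,\|\vX\|_F^2$, a positive multiple of $\|\vX\|_F^2$ precisely because $\gamma>1$ (this is the analytic reason $\E\abs{a_j}^4=1$ must be excluded). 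On the other hand $z$ is a quadratic form in a sub-Gaussian vector, so the complex Hanson--Wright inequality \cite{Vershynin} and $\abs{\tr\vX}\le\sqrt2\,\|\vX\|_F$ give $\E\abs{z}^4\le C\|\vX\|_F^4$ and show $z$ is sub-exponential with $\psi_1$-norm at most $C\|\vX\|_F$, with $C=C(C_{\psi})$. Feeding the second- and fourth-moment bounds into Lemma~(i), i.e.\ $\E\abs{z}\ge(\E\abs{z}^2)^{3/2}/(\E\abs{z}^4)^{1/2}$, and using $\E\abs{z}\le(\E\abs{z}^2)^{1/2}$, yields $c_-\|\vX\|_F\le\E\abs{z}\le c_+\|\vX\|_F$ with $c_\pm$ depending only on $C_{\psi}$ and $\gamma$.

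\textbf{Step 2: concentration and a net.} For a fixed $\vX$ as above, $\frac1m\norms{\cA(\vX)}_1$ averages $m$ i.i.d.\ sub-exponential variables of $\psi_1$-norm $\lesssim\|\vX\|_F$, so Bernstein's inequality \cite{Vershynin} gives $\PP\big(\big|\frac1m\norms{\cA(\vX)}_1-\E\abs{z}\big|>t\|\vX\|_F\big)\le2\exp(-c\,m\min\{t^2,t\})$; choosing $t$ a small absolute constant keeps $\E\abs{z}\pm t\|\vX\|_F$ inside $[\tfrac12 c_-,2c_+]\cdot\|\vX\|_F$ and makes the failure probability $2\exp(-c'm)$. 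By homogeneity it suffices to control $\frac1m\norms{\cA(\cdot)}_1$ over $S:=\{\vX\in\mathbb{H}^{d\times d}:\rank(\vX)\le2,\ \|\vX\|_{0,2}\le k,\ \|\vX\|_F=1\}$. The support of the $\le k$ nonzero columns can be chosen in $\binom{d}{k}\le(ed/k)^k$ ways; on each resulting $k\times k$ Hermitian principal block the rank-$\le2$, unit-Frobenius matrices form a set of real dimension of order $k$, hence admit a $\delta$-net of size $(C/\delta)^{Ck}$. Thus $S$ has a $\delta$-net $N$ with $\log\abs{N}\lesssim k\log(ed/k)+k\log(C/\delta)$, and a union bound of Step~2 over $N$ — using $m\gtrsim k\log(ed/k)$ with a sufficiently large absolute constant and $\delta,t$ suitable absolute constants — shows the two-sided bound holds on all of $N$ with probability at least $1-2\exp(-c_0m)$. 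To pass from $N$ to $S$ I invoke the standard approximation argument: the same covering argument, now needing only the upper Bernstein tail, gives $\frac1m\norms{\cA(\boldsymbol{Y})}_1\le C\|\boldsymbol{Y}\|_F$ uniformly over the slightly larger class of Hermitian $\boldsymbol{Y}$ with $\rank(\boldsymbol{Y})\le4$ and at most $2k$ nonzero columns (whose metric entropy is still $\lesssim k\log(ed/k)$), and for $\vX\in S$ with nearest $\vX_0\in N$ the difference $\vX-\vX_0$ lies in that class with $\|\vX-\vX_0\|_F\le\delta$; the triangle inequality then transfers the two-sided estimate from $\vX_0$ to $\vX$ up to a change of constants of size at most $C\delta$, absorbed by taking $\delta$ small.

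\textbf{The main obstacle.} Everything but Step~1 is routine — Hanson--Wright, Bernstein, the entropy count, and the net-to-set passage follow the template of \cite{XiaXu}. The crux is the lower bound $\E\abs{z}\gtrsim\|\vX\|_F$ for \emph{indefinite} rank-$2$ matrices $\vX$, where $\lambda_1\abs{\innerp{\va,\vu_1}}^2$ and $\lambda_2\abs{\innerp{\va,\vu_2}}^2$ have opposite signs and can cancel in mean: for Gaussian $\va$ rotation invariance makes $\innerp{\va,\vu_1},\innerp{\va,\vu_2}$ independent standard complex Gaussians and the bound is immediate, whereas for general sub-Gaussian entries one must carry the fourth-moment parameter $\gamma$ through the mixed-moment algebra. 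The identity $\E\abs{z}^2=\|\vX\|_F^2+(\tr\vX)^2+(\gamma-2)\|\diag(\vX)\|_F^2$ is exactly what tames this: the only term that can shrink $\E\abs{z}^2$ is $(\gamma-2)\|\diag(\vX)\|_F^2$, and the projection bound $\|\diag(\vX)\|_F\le\|\vX\|_F$ caps its effect, leaving a surviving positive constant precisely when $\gamma>1$.
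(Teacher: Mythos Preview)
Your proposal is correct and follows essentially the same route as the paper: the identical second-moment identity $\E\abs{z}^2=\|\vX\|_F^2+(\tr\vX)^2+(\gamma-2)\|\diag(\vX)\|_F^2$ combined with the H\"older lower bound $\E\abs{z}\ge(\E\abs{z}^2)^{3/2}/(\E\abs{z}^4)^{1/2}$, then Bernstein plus the covering argument from \cite{XiaXu}. The only cosmetic differences are that the paper bounds $\E\abs{z}$ above by the simpler $|\lambda_1|+|\lambda_2|\le\sqrt2$ and bounds $\E\abs{z}^4$ by a direct H\"older expansion rather than Hanson--Wright, and handles the net-to-set step by decomposing $\vX-\vX_0$ into rank-$2$ pieces rather than passing to a rank-$4$ class.
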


\begin{proof}
 We first consider the lower and upper bound of $\mathbb{E}|\va_j^*\vX\va_j|$, $j=1,\ldots, m$, for  any fixed Hermitian
\[
\vX\in \mathcal{X}:=\{\vX\in \mathbb{H}^{d\times d}\ :\ \|\vX\|_F=1,\ \ \text{rank}(\vX)\leq 2\ \text{and}\ \|\vX\|_{0,2}\leq k\}.
 \]
 For simplicity, we consider the upper and lower bounds of $\mathbb{E}|\va_1^*\vX\va_1|$. We claim that
\begin{equation}\label{eq:ELU}
C_{-}'\leq \inf_{\vX\in\mathcal{X}}\mathbb{E}|\va_1^*X\va_1|\leq \sup_{\vX\in \mathcal{X}}\mathbb{E}|\va_1^*X\va_1|\leq C_{+}',
\end{equation}
for some positive constants $C_{-}'$ and $C_{+}'$, which only depend on $C_{\psi}$ and $\gamma$.
The remaining proof is analog to the proof of \cite[Theorem 1.2]{XiaXu}.  {Therefore, we only provide a sketch of proof for the left part. Based on (\ref{eq:ELU}) and the Berntein-type inequality in \cite[Lemma 2.1]{XiaXu}, we have
\[
\mathbb{P}\left\{\left|\frac{1}{m}\|\mathcal{A}(\boldsymbol{X})\|_1-\frac{1}{m}\mathbb{E}\|\mathcal{A}(\boldsymbol{X})\|_1\right|\geq t\right\}\leq 2\exp\left(-cm\min\{t^2/C_{\psi}^2, t/C_{\psi}\}\right)
\]
for every $t\in (0,1)$. Here $c$ is some positive absolute constant.
Additionally, let $\epsilon\in (0,1)$ and denote $\mathcal{N}_{\epsilon}$ as an $\epsilon$-net of $\mathcal{X}$. The cardinality of $\mathcal{N}_{\epsilon}$ is represented by $\#\mathcal{N}_{\epsilon}$.
Taking the union probability bound, it leads to
\[
C_{-}'-t\leq \frac{1}{m}\|\mathcal{A}(\boldsymbol{X}_0)\|_1\leq C_{+}'+t,\ \text{for}\ \text{all}\ \vX_0\in \mathcal{N}_\epsilon
\]
with probability at least
$
1-2\cdot \#\mathcal{N}_{\epsilon}\cdot \exp\left(-c\min\{1/C_{\psi}^2,1/C_{\psi}\}mt^2\right).
$
Set
\[
U_\A:=\max_{X\in {\mathcal X}} \frac{1}{m}\|\A(X)\|_1.
\]
For any $X\in \mathcal{X}$,  there exits  $X_0\in \mathcal{N}_{\epsilon}$  such that
$\|X-X_0\|_F\leq \epsilon$ and $\|X-X_0\|_{0,2}\leq k$. Following the same proof strategies as  (2.7) in  \cite[Theorem 1.2]{XiaXu}, we have
\begin{equation}\label{eq:upu0}
 \frac{1}{m}\|\mathcal{A}(X)\|_1\leq
\frac{1}{m}\|\mathcal{A}(X_0)\|_1+\frac{1}{m}\|\mathcal{A}(X-X_0)\|_1\leq
C_{+}'+t+\sqrt{2}U_{\mathcal{A}}\epsilon,
\end{equation}
 which leads to
 \[
U_{\mathcal{A}}=\max_{X\in {\mathcal X}} \frac{1}{m}\|\A(X)\|_1\,\, \leq\,\, \frac{C_{+}'+t}{1-\sqrt{2}\epsilon}.
\]
Similarly, we also have
\[
\frac{1}{m}\|\mathcal{A}(X)\|_1\geq \frac{1}{m}\|\mathcal{A}(X_0)\|_1-\frac{1}{m}\|\mathcal{A}(X-X_0)\|_1\geq C_{-}'-t-\sqrt{2}U_{\mathcal{A}}\epsilon
\geq C_{-}'-t-\sqrt{2}\frac{C_{+}'+t}{1-\sqrt{2}\epsilon}\epsilon.
\]
For the remaining part of the proof, we choose $t=\min\{\frac{C_{-}'}{2},1\}$ and $\epsilon=\frac{C_{-}'}{8(C_{+}'+1)}$. Set  $C_{+}:=\frac{C_{+}'+t}{1-\sqrt{2}\epsilon}>0$ and  $C_{-}:=C_{-}'-t-\sqrt{2}\frac{C_{+}'+t}{1-\sqrt{2}\epsilon}\epsilon>0$. Then
\[
C_{-}\|X\|_F\leq
 \frac{1}{m}\|\mathcal{A}(X)\|_1\leq C_{+}\|X\|_F, \text{ for all } X\in \mathcal{X},
\]
with probability at least
\begin{equation}\label{eqn: N_covernumber}
1-2\cdot \#\mathcal{N}_{\epsilon}\cdot \exp\left(-c\min\{1/C_{\psi}^2,1/C_{\psi}\}mt^2\right)\geq 1-2\exp\left(-c'm+c''k\log(ed/ k)\right).
\end{equation}
Here $c'$ and $c''$ are constants depending on $C_{\psi}$ and $\gamma$. The last inequality in (\ref{eqn: N_covernumber}) is based on  $\#\mathcal{N}_{\epsilon}\leq \left(\frac{9\sqrt{2}ed}{\epsilon k}\right)^{4k+2}$  \cite[Lemma 2.3]{XiaXu}. If $m\geq \frac{2c''}{c'} k\log(ed/k)$, the probability,  as calculated using the formula introduced in (\ref{eqn: N_covernumber}), is no less than $1-2\exp(-\frac{c'}{2}m)$. Thus, we  arrives at the conclusion.  }

 It remains to prove the claim (\ref{eq:ELU}). Since $\rank(\vX)\leq 2$ and $\|\vX\|_F=1$, we can write $\vX$ in the form of: \[\vX=\lambda_1\vu_1\vu_1^*+\lambda_2\vu_2\vu_2^*,\] where $\lambda_1,\lambda_2\in \mathbb{R}$ such that $\lambda_1^2+\lambda_2^2=1$ and $\vu_1,\vu_2\in \mathbb{C}^n$ satisfying $\|\vu_1\|_2=\|\vu_2\|_2=1$, $\langle \vu_1,\vu_2\rangle=0$.

On one hand, we can obtain that
\begin{equation}\label{eqn: upper_1}
\mathbb{E}|\va_1^*X\va_1|=\mathbb{E}|\va_1^*(\lambda_1\vu_1\vu_1^*+\lambda_2\vu_2\vu_2^*)\va_1|\\
\leq |\lambda_1|\mathbb{E}|\va_1^*\vu_1|^2+|\lambda_2|\mathbb{E}|\va_1^*\vu_2|^2\leq \sqrt{2}.
\end{equation}

On the other hand, according to (\ref{eqn: lower_exp_z}), we have
\begin{equation}\label{eqn: lower_1}
\mathbb{E}|\va_1^*\vX\va_1|\geq \sqrt{\frac{(\mathbb{E}|\va_1^*\vX\va_1|^2)^3}{\mathbb{E}|\va_1^*\vX\va_1|^4}}.
\end{equation}
In order to get the lower bound of $\mathbb{E}|\va_1^*\vX\va_1|$, we should estimate  the lower bound of $\mathbb{E}|\va_1^*\vX\va_1|^2$ and the upper bound of $\mathbb{E}|\va_1^*\vX\va_1|^4$. By direct calculation, we can get that
\begin{equation}\label{eqn: temp1}
\begin{aligned}
\mathbb{E}|\va_1^*\vX\va_1|^2
=&\mathbb{E}\text{Tr}(\va_1^*\vX\va_1\va_1^*\vX^*\va_1)=\text{Tr}\mathbb{E}(\va_1\va_1^*\vX \va_1\va_1^*\vX^*)\\
=&\|\vX\|_F^2+\left(\sum_{i}^d \vX_{i,i}\right)^2+(\gamma-2)\sum_{i}^{d}|\vX_{i,i}|^2\\
=&(\gamma-1)\|\vX\|_F^2+(2-\gamma)\left(\|\vX\|_F^2-\sum_{i}^{d}|\vX_{i,i}|^2\right)+\left(\sum_{i}^d \vX_{i,i}\right)^2\\
\geq &\min\{1,\gamma-1\}\|\vX\|_F^2=\min\{1,\gamma-1\},
\end{aligned}
\end{equation}
provided that $\gamma>1$. We also have
\begin{equation}\label{eqn: temp2}
\begin{aligned}
\mathbb{E}\abs{\va_1^*\vX\va_1}^4=&\mathbb{E}\abs{\lambda_1\abs{\va_1^*\vu_1}^2+\lambda_2\abs{\va_2^*\vu_2}^2}^4\\
=&\lambda_1^4\mathbb{E}|\va_1^*\vu_1|^8+4\lambda_1^3\lambda_2\mathbb{E}\left(|\va_1^*\vu_1|^6|\va_2^*\vu_2|^2\right)+6\lambda_1^2\lambda_2^2\mathbb{E}\left(|\va_1^*\vu_1|^4|\va_2^*\vu_2|^4\right)\\
&+4\lambda_1\lambda_2^3\mathbb{E}\left(|\va_1^*\vu_1|^2|\va_2^*\vu_2|^6\right)+\lambda_2^4\mathbb{E}|\va_2^*\vu_2|^8\\
\overset{(a)}\leq &(\sqrt{8}\cdot \sqrt{C'}\cdot C_{\psi})^8\cdot (\lambda_1+\lambda_2)^4 \leq 4(\sqrt{8}\cdot \sqrt{C'}\cdot C_{\psi})^8.
\end{aligned}
\end{equation}
{The inequality  (a)   is based on the H$\ddot{\text{o}}$lder's inequality. For instance,
\[
\mathbb{E}\left(|\va_1^*\vu_1|^6|\va_2^*\vu_2|^2\right)\leq (\mathbb{E}\left|\va_1^*\vu_1\right|^8)^{3/4}(\mathbb{E}\left|\va_2^*\vu_2\right|^8)^{1/4}\leq (\sqrt{8}\cdot \sqrt{C'}\cdot C_{\psi})^8.
\]
 The last inequality above follows from the definition of $\|\cdot\|_{\psi_2}$ and (\ref{eqn: vector_subGaussian}), which leads to $\||\va_i^*\vz|\|_{\psi_2}\leq \sqrt{C'}\cdot C_{\psi}$ for any $\|\vz\|_2=1$.}
Substituting  (\ref{eqn: temp1}) and (\ref{eqn: temp2}) into (\ref{eqn: lower_1}), we can obtain that
\begin{equation}\label{eqn: lower_2}
\mathbb{E}|\va_1^*\vX\va_1|\geq {\frac{\min\{1,(\gamma-1)^{3/2}\}}{128\cdot (C')^3\cdot C_\psi^4}}.
\end{equation}
Based on (\ref{eqn: lower_2}) and (\ref{eqn: upper_1}), we  obtain
(\ref{eq:ELU}) holds
with $C_{-}'={\frac{\min\{1,(\gamma-1)^{3/2}\}}{128\cdot (C')^3\cdot C_\psi^4}}$ and $C_{+}'=\sqrt{2}$.
\end{proof}
\begin{remark}
Theorem \ref{thm: RIP} establishes the restricted isometry property on simultaneous low-rank and sparse matrices,  which is similar to the RIP-$\ell_2/\ell_1$ in \cite{CCG15}. However, the RIP-$\ell_2/\ell_1$  in \cite{CCG15} requires the matrix $\vX$ be in the form of low-rank plus sparse, i.e.,   $\vX=\vX_1+\vX_2$, where $\vX_1$ is low rank and $\vX_2$ is sparse \cite[Definition 3]{CCG15}.
  The restricted isometry property on low-rank matrices in real space without sparsity constraint was studied in  \cite{geometry_smooth, T. Cai}. Particularly, they show that
\begin{equation}\label{eqn: general_r}
\|\mathcal{A}(\vX)\|_1\gtrsim 1\qquad \text{and}\qquad \|\mathcal{A}(\vX)\|_1\lesssim \sqrt{r},
\end{equation}
for all $\vX\in \mathbb{R}^{d\times d}$ with $\text{rank}(\vX)\leq r$ and $\|\vX\|_F=1$.
\end{remark}

Theorem \ref{thm: SRIP} is a strong version of Theorem \ref{thm: RIP}, which plays an important role to extend the results in the real case into the complex case.  
\begin{theorem}\label{thm: SRIP}
Under the assumption of Theorem \ref{thm: RIP},
there exists a universal constant $\beta_0>0$ such that, with probability at least $1-2 \exp(-c_0m/4)$,  the following holds:
\begin{equation}\label{eq:strong}
C_{-}(1-\beta_0)\|\vX\|_{F}\leq\frac{1}{m}\|\mathcal{A}_I(\vX)\|_{1}\leq {C_+}\|\vX\|_{F},\quad \text{for all } I\subset\{1,\ldots,m\},\text{ with }\#I \geq (1-\beta_0)m,
\end{equation}
for all Hermitian $\vX\in \mathbb{C}^{d\times d}$ with $\text{rank}(\vX)\leq 2$ and
$\|\vX\|_{0,2}\leq k$ (also $\|\vX^{*}\|_{0,2}\leq k$), provided
\[
 m\gtrsim \frac{1}{1-\beta_0}k\log (ed/k).
 \]
 Here, $c_0,$  $C_{-},$ $C_+$ are positive constants presented in Theorem \ref{thm: RIP}.
\end{theorem}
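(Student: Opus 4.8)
The plan is to handle the two inequalities in \eqref{eq:strong} separately. The upper bound needs nothing new: since $\cA_I(\vX)$ is a subvector of $\cA(\vX)$, on the event on which Theorem~\ref{thm: RIP} holds (probability at least $1-2\exp(-c_0m)$) we have $\frac1m\|\cA_I(\vX)\|_1\le\frac1m\|\cA(\vX)\|_1\le C_+\|\vX\|_F$. So the whole content is the lower bound. Writing $Z_j:=|\va_j^*\vX\va_j|$, the driving observation is the deterministic identity
\[
\min_{\#I\ge(1-\beta_0)m}\sum_{j\in I}Z_j\;=\;\sum_{j=1}^mZ_j-\bigl(\text{sum of the }\floor{\beta_0 m}\text{ largest among }Z_1,\dots,Z_m\bigr).
\]
Thus it suffices to show, uniformly over $\vX$ in $\mathcal X:=\{\vX\in\mathbb H^{d\times d}:\|\vX\|_F=1,\ \rank(\vX)\le2,\ \|\vX\|_{0,2}\le k\}$, that $\frac1m\sum_jZ_j$ is not much below $C_-':=\inf_{\vX\in\mathcal X}\mathbb E|\va_1^*\vX\va_1|$ while the mass of the $\floor{\beta_0 m}$ largest $Z_j$ is small; the room to do this comes from the strict slack $t:=\min\{C_-'/2,1\}>0$ that the proof of Theorem~\ref{thm: RIP} leaves between $C_-'$ and $C_-$ (there $C_-=C_-'-t-\sqrt2\,C_+\epsilon$), together with the fact that the top‑$\floor{\beta_0 m}$ mass can be made $o(1)$ as $\beta_0\to0$.

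First I would fix $\vX_0$ in the $\epsilon$-net $\mathcal N_\epsilon$ of $\mathcal X$ from the proof of Theorem~\ref{thm: RIP}, so $\#\mathcal N_\epsilon\le(9\sqrt2\,ed/(\epsilon k))^{4k+2}$ by \cite[Lemma 2.3]{XiaXu}. By \eqref{eq:ELU}, $C_-'\le\mathbb E|\va_1^*\vX_0\va_1|\le C_+'$; writing $\vX_0=\lambda_1\vu_1\vu_1^*+\lambda_2\vu_2\vu_2^*$ with $\lambda_1^2+\lambda_2^2=1$ gives $|\va_j^*\vX_0\va_j|\le|\va_j^*\vu_1|^2+|\va_j^*\vu_2|^2$, so by \eqref{eqn: vector_subGaussian} each $|\va_j^*\vX_0\va_j|$ is sub‑exponential with norm at most a universal $K$ depending on $C_\psi$. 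Two applications of the Bernstein inequality \cite[Lemma 2.1]{XiaXu}, with parameters $\theta,\theta'>0$ and a threshold $\tau>0$, then give that with probability at least $1-2\exp(-cm\min\{\theta^2/K^2,\theta/K\})-2\exp(-cm\min\{\theta'^2/K^2,\theta'/K\})$,
\[
\tfrac1m\sum_{j=1}^m|\va_j^*\vX_0\va_j|\ \ge\ C_-'-\theta,
\qquad
\tfrac1m\sum_{j=1}^m\bigl(|\va_j^*\vX_0\va_j|-\tau\bigr)_+\ \le\ \mathbb E\bigl(|\va_1^*\vX_0\va_1|-\tau\bigr)_+ +\theta'.
\]
Since $(z-\tau)_+\le z$ the truncated variable still has sub‑exponential norm $\le K$, and the sub‑exponential tail gives $\mathbb E(|\va_1^*\vX_0\va_1|-\tau)_+\lesssim Ke^{-c\tau/K}$. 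Combining the elementary bound ``sum of the $s$ largest $\le s\tau+\sum_j(Z_j-\tau)_+$'' with $s=\floor{\beta_0 m}$, the choice $\tau\asymp K\log(1/\beta_0)$, and the identity above yields
\[
\min_{\#I\ge(1-\beta_0)m}\ \tfrac1m\sum_{j\in I}|\va_j^*\vX_0\va_j|\ \ge\ C_-'-\theta-\theta'-C\beta_0 K\log(1/\beta_0).
\]

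Next I would de‑net exactly as in the proof of Theorem~\ref{thm: RIP}: given $\vX\in\mathcal X$, pick $\vX_0\in\mathcal N_\epsilon$ with $\|\vX-\vX_0\|_F\le\epsilon$; on the Theorem~\ref{thm: RIP} event $\frac1m\|\cA(\vX-\vX_0)\|_1\le\sqrt2\,C_+\epsilon$ (the step leading to \eqref{eq:upu0}), so for every admissible $I$,
\[
\tfrac1m\|\cA_I(\vX)\|_1\ \ge\ \tfrac1m\|\cA_I(\vX_0)\|_1-\sqrt2\,C_+\epsilon\ \ge\ \bigl(C_-'-\sqrt2\,C_+\epsilon\bigr)-\theta-\theta'-C\beta_0 K\log(1/\beta_0).
\]
Since $C_-'-\sqrt2\,C_+\epsilon=C_-+t$ by the definition of $C_-$, the right side equals $C_-+t-\theta-\theta'-C\beta_0 K\log(1/\beta_0)$. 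Now choose the universal constant $\beta_0\in(0,1)$ small enough that $C\beta_0 K\log(1/\beta_0)\le t/4$ and set $\theta=\theta'=t/4$; then $\frac1m\|\cA_I(\vX)\|_1\ge C_-+t/4\ge C_-(1-\beta_0)$ for all admissible $I$ and all $\vX\in\mathcal X$, and general Hermitian rank‑$\le2$, $k$‑column‑sparse $\vX$ follow by homogeneity. For the probability one intersects the Theorem~\ref{thm: RIP} event with the two Bernstein events in a union over $\mathcal N_\epsilon$; with $\theta=\theta'=t/4$ fixed and $m\gtrsim\frac1{1-\beta_0}k\log(ed/k)$, the cardinality bound on $\#\mathcal N_\epsilon$ absorbs the net factor exactly as in \eqref{eqn: N_covernumber}, leaving probability at least $1-2\exp(-c_0m/4)$.

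The delicate point, and the reason a naive ``discard a $\beta_0$‑fraction of the measurements, lose a $\beta_0$‑fraction of the $\ell_1$‑norm'' estimate is false, is that the $\floor{\beta_0 m}$ largest values $|\va_j^*\vX\va_j|$ generically carry strictly more than a $\beta_0$‑fraction of the total mass: for sub‑exponential summands the excess is of order $\beta_0\log(1/\beta_0)$ times the mean. The argument closes only because (i) $\beta_0\log(1/\beta_0)\to0$, so for $\beta_0$ a small enough universal constant (depending on $C_\psi,\gamma$) this excess is swallowed by the fixed additive gap $t=C_-'-C_-$ (up to the controlled $\epsilon$‑net error $\sqrt2\,C_+\epsilon$), and (ii) the top‑$\floor{\beta_0 m}$ mass is controlled \emph{uniformly} over $\mathcal X$ by the truncation trick, which reduces it to two scalar concentration statements and lets us reuse the $\epsilon$‑net of Theorem~\ref{thm: RIP}; in particular no extra union bound over the $\binom{m}{\floor{\beta_0 m}}$ subsets $I$ is incurred, which is what keeps the exponent in the failure probability of order $m$.
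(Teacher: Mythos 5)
Your proof is correct, but it takes a genuinely different route from the paper's. The paper proves the lower bound in \eqref{eq:strong} by the most direct means: for each fixed subset $I_0$ with $\#I_0\geq(1-\beta_0)m$ it applies Theorem~\ref{thm: RIP} to the sub-collection $\{\va_j\}_{j\in I_0}$ (which still has $\gtrsim k\log(ed/k)$ measurements), rescales by $\#I_0/m\geq 1-\beta_0$ to get the factor $C_-(1-\beta_0)$, and then takes a union bound over all $\binom{m}{\geq(1-\beta_0)m}$ subsets; the entropy cost $\log\binom{m}{\beta_0 m}\leq\beta_0\log(e/\beta_0)\,m$ is absorbed by choosing $\beta_0$ so small that $\beta_0+\beta_0\log(e/\beta_0)\leq c_0/4$, which is where the $c_0/4$ in the probability comes from. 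You instead avoid the subset union bound entirely via the order-statistics identity $\min_{\#I\geq(1-\beta_0)m}\sum_{j\in I}Z_j=\sum_jZ_j-(\text{top-}\lfloor\beta_0m\rfloor\text{ mass})$ and the truncation bound $\sum_{\text{top }s}Z_j\leq s\tau+\sum_j(Z_j-\tau)_+$, reducing everything to two scalar Bernstein estimates per net point and absorbing the $O(\beta_0\log(1/\beta_0))$ excess into the slack $t=C_-'-C_--\sqrt2\,C_+\epsilon$ left over from the proof of Theorem~\ref{thm: RIP}. Note that your closing remark slightly overstates the danger of the union bound over subsets: the paper shows it does go through, precisely because the binomial entropy enters with the small prefactor $\beta_0\log(e/\beta_0)$, so the two approaches end up trading the same kind of $\beta_0\log(1/\beta_0)$ smallness condition against different fixed quantities ($c_0$ for the paper, $t/K$ for you). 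Two minor points: your argument yields the failure probability $2\exp(-cm)$ for some constant $c$ depending on $C_\psi,\gamma$ rather than the paper's literal $c_0/4$ (harmless, since $c_0$ is unspecified anyway), and your method leans on the specific structure $\vX=\lambda_1\vu_1\vu_1^*+\lambda_2\vu_2\vu_2^*$ to get sub-exponential tails for $|\va_j^*\vX\va_j|$, whereas the paper's argument only reuses Theorem~\ref{thm: RIP} as a black box. Your lower bound is in fact slightly stronger ($C_-+t/4$ rather than $C_-(1-\beta_0)$), which is more than the theorem asks for.
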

\begin{proof}
According to Theorem \ref{thm: RIP}, for any fixed $I_0\subset \{1,\ldots,m\}$ with $\#I_0\geq (1-\beta_0)m$,
\[
C_-\|\vX\|_{F}\leq\frac{1}{\#I_0}\|\mathcal{A}_{I_0}(\vX)\|_{1}\leq C_+\|\vX\|_{F}
\]
holds with probability at least $ 1-2\exp(-c_0(1-\beta_0)m)$, provided  $(1-\beta_0)m\gtrsim  k\log(ed/k)$. A simple observation is that
\[
C_-(1-\beta_0)\|\vX\|_{F}\leq\frac{1}{m}\|\mathcal{A}_{I_0}(\vX)\|_{1}= \frac{\#I_0}{m}\cdot\frac{1}{\#I_0}\|\mathcal{A}_{I_0}(\vX)\|_{1}\leq {C_+}\|\vX\|_{F}.
\]
 {If $\beta_0<1/2$, taking the union probability bound,} (\ref{eq:strong}) holds with  probability at least
\begin{equation*}
\begin{aligned}
1-2\sum_{\# I\geq (1-\beta_0)m}{m\choose \#I}\exp(-c_0(1-\beta_0)m)&\geq 1-2\sum_{\# I\geq (1-\beta_0)m}{m\choose (1-\beta_0)m}\exp(-c_0(1-\beta_0)m)\\
&=1-2(\beta_0m+1){m\choose (1-\beta_0)m}\exp(-c_0(1-\beta_0)m)\\
&\geq 1- 2\exp(\beta_0m+\beta_0\log(e/\beta_0)m-c_0(1-\beta_0)m).
\end{aligned}
\end{equation*}
The last inequality follows from $1+\beta_0m\leq \exp(\beta_0m)$, and
 \[
 {m\choose (1-\beta_0)m}={m\choose \beta_0m}\leq (\frac{e}{\beta_0})^{\beta_0m}=\exp(\beta_0\log (e/\beta_0)m).
 \]
 We can take $\beta_0>0$ small enough so that
  \begin{equation}\label{eqn: beta_condition}
  \beta_0\log(e/\beta_0)\leq \frac{c_0}{4}, \qquad\beta_0\leq \frac{c_0}{4},\qquad\text{and}\qquad\beta_0\leq\frac{1}{4}.
  \end{equation}
   Hence, the inequality (\ref{eq:strong}) holds with probability at least
\[
1- 2\exp(\beta_0m+\beta_0\log(e/\beta_0)m-c_0(1-\beta_0)m)\geq 1-2 \exp(-c_0m/4) .
\]
We arrive at the conclusion.
\end{proof}

\begin{remark}
Theorem \ref{thm: SRIP} claims that there exits $\beta_0>0$ so that (\ref{eq:strong}) holds.  In fact, according to (\ref{eqn: beta_condition}),
we can take $\beta_0:=\min\{\frac{c_0}{8}, \exp\left(-\frac{c_0}{8}\right), \frac{1}{4}\}$, where $c_0$ is defined in Theorem \ref{thm: RIP}.
 If we take $k=d$ in Theorem \ref{thm: SRIP}, then (\ref{eq:strong}) holds with high probability provided  $m\gtrsim d$.
\end{remark}
The following lemma plays a key role in the proof of Theorem   \ref{th:upbound}. We postpone its proof to the end of this section.
\begin{lemma}\label{lem: non_sparse}
Assume that
 $\va_j,$ $j=1,\ldots,m,$ are independently taken as complex sub-Gaussian random vectors, whose elements are independently drawn from some sub-Gaussian random variable $\xi$ with $\|\xi\|_{\psi_2}\leq C_{\psi}$, $\mathbb{E}\xi=0$, $\mathbb{E}\xi^2=0$, $\mathbb{E}|\xi|^2=1$, and $\mathbb{E}|\xi|^4=\gamma>1$. For all $\vx_0\in \mathbb{C}^d$, assume that $\widehat{\vx}\in \mathbb{C}^d$ is the solution to  (\ref{eqn: nonsparse_model}) with
$\mathbb{F}=\mathbb{C}$ and $b_j=\abs{\innerp{\va_j,\vx_0}}+\eta_j$, $j=1,\ldots,m$.
If $m\gtrsim d$, the following holds with probability at least $1-2 \exp(-c_0m/4)$:
\begin{equation}\label{eqn: non_sparse_result1}
\|\mathcal{A}_I({ \widehat{\vx}\widehat{\vx}^{*}-\vx_0\vx_0^*})\|_1\lesssim\sqrt{m}\|\ve\|_2(\|\widehat{\vx}\|_2+\|\vx_0\|_2).
\end{equation}
Here, $c_0$ is the positive constant which is defined in Theorem \ref{thm: RIP}.
The index set $I:=I_{\hat{\vx},\vx_0}\subset\{1,\ldots,m\}$ is chosen by the following way. Set $\xi_j:=\abs{\innerp{\va_j,\widehat{\vx}}}+\abs{\innerp{\va_j,\vx_0}}$ and assume that $\xi_{j_1}\leq \xi_{j_2}\leq \cdots\leq \xi_{j_m}$. Take $I=\{j_t\ |\ t\leq (1-\beta_0)m\}$, where $\beta_0>0$ is defined in Theorem \ref{thm: SRIP}.
\end{lemma}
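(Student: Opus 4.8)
The plan is to exploit the optimality of $\widehat{\vx}$ for the amplitude-based functional and convert it, via the algebraic identity $||\langle\va_j,\vu\rangle|-|\langle\va_j,\vv\rangle||\cdot(|\langle\va_j,\vu\rangle|+|\langle\va_j,\vv\rangle|) = ||\langle\va_j,\vu\rangle|^2-|\langle\va_j,\vv\rangle|^2| = |\va_j^*(\vu\vu^*-\vv\vv^*)\va_j|$, into a statement about $\|\mathcal{A}_I(\widehat{\vx}\widehat{\vx}^*-\vx_0\vx_0^*)\|_1$. The reason for restricting to the index set $I$ is precisely to control the factor $\xi_j=|\langle\va_j,\widehat{\vx}\rangle|+|\langle\va_j,\vx_0\rangle|$: on $I$ these are the $(1-\beta_0)m$ smallest values, so a counting/Markov-type bound on $\sum_j\xi_j^2$ (which is controlled by $(\|\widehat{\vx}\|_2+\|\vx_0\|_2)^2 m$ with high probability via a sub-Gaussian/sub-exponential concentration bound on $\sum_j |\langle\va_j,\vz\rangle|^2$, uniform over $\vz$ after a covering argument, or simply $\frac1m\sum\xi_j^2\lesssim(\|\widehat{\vx}\|_2+\|\vx_0\|_2)^2$) lets me bound $\xi_{j_t}$ for $t\le(1-\beta_0)m$, hence $\sum_{j\in I}\xi_j^2\lesssim m(\|\widehat{\vx}\|_2+\|\vx_0\|_2)^2$.

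First I would write down the optimality inequality $\sum_{j=1}^m(|\langle\va_j,\widehat{\vx}\rangle|-b_j)^2\le\sum_{j=1}^m(|\langle\va_j,\vx_0\rangle|-b_j)^2=\sum_{j=1}^m\eta_j^2=\|\ve\|_2^2$, and expand the left side using $b_j=|\langle\va_j,\vx_0\rangle|+\eta_j$. This yields $\sum_j(|\langle\va_j,\widehat{\vx}\rangle|-|\langle\va_j,\vx_0\rangle|)^2 \le 2\sum_j\eta_j(|\langle\va_j,\widehat{\vx}\rangle|-|\langle\va_j,\vx_0\rangle|) \le 2\|\ve\|_2\big(\sum_j(|\langle\va_j,\widehat{\vx}\rangle|-|\langle\va_j,\vx_0\rangle|)^2\big)^{1/2}$ by Cauchy–Schwarz, so $\big(\sum_j(|\langle\va_j,\widehat{\vx}\rangle|-|\langle\va_j,\vx_0\rangle|)^2\big)^{1/2}\le 2\|\ve\|_2$. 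Then I restrict the sum to $j\in I$ and apply Cauchy–Schwarz in the other direction:
\[
\sum_{j\in I}|\va_j^*(\widehat{\vx}\widehat{\vx}^*-\vx_0\vx_0^*)\va_j| = \sum_{j\in I}\big||\langle\va_j,\widehat{\vx}\rangle|-|\langle\va_j,\vx_0\rangle|\big|\,\xi_j \le \Big(\sum_{j\in I}(|\langle\va_j,\widehat{\vx}\rangle|-|\langle\va_j,\vx_0\rangle|)^2\Big)^{1/2}\Big(\sum_{j\in I}\xi_j^2\Big)^{1/2}.
\]
Combining the bound $2\|\ve\|_2$ on the first factor with $\big(\sum_{j\in I}\xi_j^2\big)^{1/2}\lesssim\sqrt{m}(\|\widehat{\vx}\|_2+\|\vx_0\|_2)$ on the second gives exactly \eqref{eqn: non_sparse_result1}.

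The main obstacle, and the step I would spend the most care on, is the high-probability bound $\sum_{j\in I}\xi_j^2\lesssim m(\|\widehat{\vx}\|_2+\|\vx_0\|_2)^2$ holding \emph{uniformly over all} $\vx_0$ and all solutions $\widehat{\vx}$ — since $\widehat{\vx}$ is data-dependent, one cannot fix it before drawing the $\va_j$. I would handle this by proving the stronger event that $\frac1m\sum_{j=1}^m|\langle\va_j,\vz\rangle|^2\le C(\|\widehat{\vx}\|_2^2+\cdots)$... more precisely, that $\frac1m\sum_{j=1}^m|\langle\va_j,\vz\rangle|^2\lesssim\|\vz\|_2^2$ holds simultaneously for all $\vz\in\mathbb{C}^d$ with high probability (an upper matrix-deviation / covering-net bound for sub-Gaussian rows, valid once $m\gtrsim d$); applying this with $\vz=\widehat{\vx}$ and $\vz=\vx_0$ bounds $\sum_{j=1}^m\xi_j^2\le 2\sum_j(|\langle\va_j,\widehat{\vx}\rangle|^2+|\langle\va_j,\vx_0\rangle|^2)\lesssim m(\|\widehat{\vx}\|_2^2+\|\vx_0\|_2^2)$, and a fortiori $\sum_{j\in I}\xi_j^2$ is bounded. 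One subtlety is that this "only the small $\xi_j$" refinement via $I$ is not actually needed for the crude bound just described, but is retained because the set $I$ is exactly what feeds into the strong RIP of Theorem \ref{thm: SRIP} in the proof of Theorem \ref{th:upbound}; I would note that the probability budget $1-2\exp(-c_0m/4)$ is dictated by Theorem \ref{thm: SRIP}, and that the auxiliary second-moment bound holds with at least that probability when $m\gtrsim d$.
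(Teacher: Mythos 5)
Your proposal is correct, but it takes a genuinely different (and shorter) route than the paper. The paper starts from the $\ell_1$ consequence of optimality, $\sum_{j}\bigl|\abs{\innerp{\va_j,\widehat{\vx}}}-b_j\bigr|\le\sqrt{m}\|\ve\|_2$, writes each summand as $\bigl|\,\abs{\innerp{\va_j\va_j^*,\widehat{\vx}\widehat{\vx}^*-\vx_0\vx_0^*}}-|\eta_j|\xi_j\bigr|/\xi_j$, and then needs a \emph{pointwise} bound $\xi_j\lesssim\|\widehat{\vx}\|_2+\|\vx_0\|_2$ for $j\in I$; this is exactly where the order-statistics definition of $I$ is used (for $j\in I$ one has $\xi_j^2\le\frac{1}{\beta_0 m}\sum_{t\notin I}\xi_t^2$, which is then controlled by the RIP of Theorem \ref{thm: RIP}), and the argument finishes with a two-case analysis depending on whether the noise term dominates. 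You instead use the $\ell_2$ consequence $\bigl(\sum_j(\abs{\innerp{\va_j,\widehat{\vx}}}-\abs{\innerp{\va_j,\vx_0}})^2\bigr)^{1/2}\le 2\|\ve\|_2$, multiply by $\xi_j$, and apply Cauchy--Schwarz, so you only need the \emph{aggregate} bound $\sum_j\xi_j^2\lesssim m(\|\widehat{\vx}\|_2+\|\vx_0\|_2)^2$, which follows uniformly from the sub-Gaussian operator-norm bound (Theorem \ref{thm: upper_A}) once $m\gtrsim d$; no case split is required, and as you correctly note the restriction to $I$ is then inessential for this lemma and is kept only so the conclusion plugs into Theorem \ref{thm: SRIP}. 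What the paper's heavier machinery buys is robustness to the sparse regime: in Lemma \ref{lem: sparse} one only has $m\gtrsim k\log(ed/k)\ll d$, so the global operator-norm bound you rely on is unavailable, and the pointwise control of $\xi_j$ on $I$ via the order statistics plus sparse RIP is the version that survives there. The only loose end in your write-up is the probability constant ($1-2\exp(-m)$ from the operator-norm event versus the stated $1-2\exp(-c_0m/4)$), which is harmless up to renaming constants.
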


We next present the proof of Theorem  \ref{th:upbound}.
\begin{proof}[Proof of Theorem \ref{th:upbound}]
If $ \widehat{\vx}=\vx_0=0$, the conclusion holds. We next assume that either $\widehat{\vx}\neq 0$ or $\vx_0\neq 0$.
Suppose that $\theta_0\in \R$ such that   $\langle \widehat{\vx}, e^{ \mathrm{i} \theta_0}\vx_0\rangle\geq 0$. According to
 Lemma \ref{Lemma 3.2},  we have
\[
\|\widehat{\vx}\widehat{\vx}^{*}-\vx_0\vx_0^*\|_F^2\geq \frac{1}{2}\cdot \|\widehat{\vx}\|_2^2 \cdot \|\widehat{\vx}-e^{\mathrm{i}\theta_0}\vx_0\|_2^2 \qquad \text{and}\qquad \|\widehat{\vx}\widehat{\vx}^{*}-\vx_0\vx_0^*\|_F^2\geq \frac{1}{2}\cdot \|\vx_0\|_2^2 \cdot \|\widehat{\vx}-e^{\mathrm{i}\theta_0}\vx_0\|_2^2,
\]
which implies  that
\begin{equation}\label{eqn: matrix_vector_remark}
\|\widehat{\vx}\widehat{\vx}^{*}-\vx_0\vx_0^*\|_F^2\geq \frac{1}{4}\left( \|\widehat{\vx}\|_2^2+ \|\vx_0\|_2^2\right)\|\widehat{\vx}-e^{\mathrm{i}\theta_0}\vx_0\|_2^2\geq \frac{1}{8}(\|\widehat{\vx}\|_2+\|\vx_0\|_2)^2\|\widehat{\vx}-e^{\mathrm{i}\theta_0}\vx_0\|_2^2.
\end{equation}
Hence, we have
\begin{equation}\label{eq:sqrt2}
\|{ \widehat{\vx}\widehat{\vx}^{*}-\vx_0\vx_0^*}\|_F \geq \frac{\sqrt{2}}{4}(\|\widehat{\vx}\|_2+\|\vx_0\|_2) \|\widehat{\vx}-e^{\mathrm{i}\theta_0}\vx_0\|_2\geq  \frac{\sqrt{2}}{4}(\|\widehat{\vx}\|_2+\|\vx_0\|_2)\inf_{\theta\in[0,2\pi)}\|\widehat{\vx}-e^{\mathrm{i}\theta}\vx_0\|_2.
\end{equation}
Combining  (\ref{eq:sqrt2}) and Theorem \ref{thm: SRIP} with $k=d$, we obtain that
\begin{equation*}
\begin{aligned}
&\frac{\sqrt{2}C_{-}}{4}(\|\widehat{\vx}\|_2+\|\vx_0\|_2)\inf_{\theta\in[0,2\pi)}\|\widehat{\vx}-\exp(\mathrm{i}\theta)\vx_0\|_2
\leq C_{-}\|{ \widehat{\vx}\widehat{\vx}^{*}-\vx_0\vx_0^*}\|_F\\
&\qquad \leq \frac{1}{(1-\beta_0)m}\|\mathcal{A}_I({ \widehat{\vx}\widehat{\vx}^{*}-\vx_0\vx_0^*})\|_1\lesssim\frac{1}{\sqrt{m}}\|\ve\|_2(\|\widehat{\vx}\|_2+\|\vx_0\|_2),
\end{aligned}
\end{equation*}
which implies the conclusion. Here, the set $I$ is defined in Lemma \ref{lem: non_sparse} and the last inequality follows from (\ref{eqn: non_sparse_result1}).
\end{proof}

\begin{proof}[Proof of Lemma \ref{lem: non_sparse}]
If $\widehat{\vx}=\vx_0=0$, then the conclusion holds.  We next assume that either $\widehat{\vx}\neq 0$ or $\vx_0\neq 0$.
According to the definition of $\widehat{\vx}$, we have
\[
\sum_{j=1}^m(\abs{\innerp{\va_j,\widehat{\vx}}}-b_j)^2\leq \sum_{j=1}^m(\abs{\innerp{\va_j,\vx_0}}-b_j)^2.
\]
Thus
\begin{equation}\label{eqn: error_lower_0}
\sum_{j=1}^m \left|\abs{\innerp{\va_j,\widehat{\vx}}}-b_j\right|\leq \sqrt{m}\sqrt{\sum_{j=1}^m(\abs{\innerp{\va_j,\widehat{\vx}}}-b_j)^2}\leq \sqrt{m}\sqrt{ \sum_{j=1}^m(\abs{\innerp{\va_j,\vx_0}}-b_j)^2}\leq \sqrt{m}\|\ve\|_2.
\end{equation}
Furthermore, we have
\begin{equation}\label{eqn: error_lower_1}
\begin{aligned}
\sum_{j=1}^m \left|\abs{\innerp{\va_j,\widehat{\vx}}}-b_j\right|&=\sum_{j=1}^m \left|\abs{\innerp{\va_j,\widehat{\vx}}}-\abs{\innerp{\va_j,\vx_0}}-\eta_j\right|\\
&=\sum_{j=1}^m\frac{\left|\abs{\innerp{\va_j,\widehat{\vx}}}^2-\abs{\innerp{\va_j,\vx_0}}^2-\eta_j(\abs{\innerp{\va_j,\widehat{\vx}}}+\abs{\innerp{\va_j,\vx_0}})\right|}{\abs{\innerp{\va_j,\widehat{\vx}}}+\abs{\innerp{\va_j,\vx_0}}}\\
&\geq \sum_{j\in I}\frac{\left|\left|\innerp{\va_j\va_j^*, \widehat{\vx}\widehat{\vx}^{*}-\vx_0\vx_0^*}\right|-|\eta_j|(\abs{\innerp{\va_j,\widehat{\vx}}}+\abs{\innerp{\va_j,\vx_0}})\right|}{\abs{\innerp{\va_j,\widehat{\vx}}}+\abs{\innerp{\va_j,\vx_0}}}.
\end{aligned}
\end{equation}
Combining (\ref{eqn: error_lower_0}) and (\ref{eqn: error_lower_1}), we obtain that
\begin{equation}\label{eqn: commonly_used1}
\sqrt{m}\|\ve\|_2\geq \sum_{j\in I}\frac{\big|\left|\innerp{\va_j\va_j^*, \widehat{\vx}\widehat{\vx}^{*}-\vx_0\vx_0^*}\right|-|\eta_j|(\abs{\innerp{\va_j,\widehat{\vx}}}+\abs{\innerp{\va_j,\vx_0}})\big|}{\abs{\innerp{\va_j,\widehat{\vx}}}+\abs{\innerp{\va_j,\vx_0}}}.
\end{equation}
Then we have
\begin{equation}\label{eqn: error_lower_final}
\begin{split}
\sqrt{m}\|\ve\|_2&\geq \sum_{j\in I}\frac{\left|\left|\innerp{\va_j\va_j^*, \widehat{\vx}\widehat{\vx}^{*}-\vx_0\vx_0^*}\right|-|\eta_j|(\abs{\innerp{\va_j,\widehat{\vx}}}+\abs{\innerp{\va_j,\vx_0}})\right|}{\abs{\innerp{\va_j,\widehat{\vx}}}+\abs{\innerp{\va_j,\vx_0}}}\\
&\geq   \sum_{j\in I}\frac{\left|\left|\innerp{\va_j\va_j^*, \widehat{\vx}\widehat{\vx}^{*}-\vx_0\vx_0^*}\right|-|\eta_j|(\abs{\innerp{\va_j,\widehat{\vx}}}+\abs{\innerp{\va_j,\vx_0}})\right|}{\sqrt{2C_{\beta_0}}(\|\vx_0\|_2+\|\widehat{\vx}\|_2)}\\
&\geq  \frac{\left|\sum_{j\in I}\left|\innerp{\va_j\va_j^*, \widehat{\vx}\widehat{\vx}^{*}-\vx_0\vx_0^*}\right|-\sum_{j\in I}|\eta_j|(\abs{\innerp{\va_j,\widehat{\vx}}}+\abs{\innerp{\va_j,\vx_0}})\right|}{\sqrt{2C_{\beta_0}}(\|\vx_0\|_2+\|\widehat{\vx}\|_2)}.
\end{split}
\end{equation}
where $C_{\beta_0}:=\frac{C_+}{\beta_0}$ and the second inequality in (\ref{eqn: error_lower_final}) follows from
\begin{equation}\label{eq:vxup}.
\abs{\innerp{\va_j,\vx_0}}+\abs{\innerp{\va_j,\widehat{\vx}}}
\leq  \sqrt{2C_{\beta_0}}(\|\vx_0\|_2+\|\widehat{\vx}\|_2),\quad \text{for } j\in I.
\end{equation}
We postpone the argument of (\ref{eq:vxup}) until the end of this proof.
We next employ  (\ref{eqn: error_lower_final}) to show that
\begin{equation}\label{eqn: error_lower_final1}
\|\mathcal{A}_{I}(\widehat{\vx}\widehat{\vx}^*-\vx_0\vx_0^*)\|_1\lesssim \sqrt{m}\|\ve\|_2(\|\widehat{\vx}\|_2+\|\vx_0\|_2)
\end{equation}
under the following two cases.

{\bf{Case 1:}} We assume that the following holds:
\begin{equation}\label{eqn: case1}
\sum_{j\in I}\left|\innerp{\va_j\va_j^*, \widehat{\vx}\widehat{\vx}^{*}-\vx_0\vx_0^*}\right|\geq 4\sum_{j\in I}|\eta_j|(\abs{\innerp{\va_j,\widehat{\vx}}}+\abs{\innerp{\va_j,\vx_0}}).
\end{equation}
Then we have
{
\begin{equation}\label{eqn: case1_temp}
\begin{aligned}
&\left|\sum_{j\in I}\left|\innerp{\va_j\va_j^*, \widehat{\vx}\widehat{\vx}^{*}-\vx_0\vx_0^*}\right|-\sum_{j\in I}|\eta_j|(\abs{\innerp{\va_j,\widehat{\vx}}}+\abs{\innerp{\va_j,\vx_0}})\right|^2\\
=&\left(\sum_{j\in I}\left|\innerp{\va_j\va_j^*, \widehat{\vx}\widehat{\vx}^{*}-\vx_0\vx_0^*}\right|\right)^2+\left(\sum_{j\in I}|\eta_j|(\abs{\innerp{\va_j,\widehat{\vx}}}+\abs{\innerp{\va_j,\vx_0}})\right)^2\\
&-2\left(\sum_{j\in I}\left|\innerp{\va_j\va_j^*, \widehat{\vx}\widehat{\vx}^{*}-\vx_0\vx_0^*}\right|\right)\left(\sum_{j\in I}|\eta_j|(\abs{\innerp{\va_j,\widehat{\vx}}}+\abs{\innerp{\va_j,\vx_0}})\right)\\
\geq &\left(\sum_{j\in I}\left|\innerp{\va_j\va_j^*, \widehat{\vx}\widehat{\vx}^{*}-\vx_0\vx_0^*}\right|\right)^2-2\left(\sum_{j\in I}\left|\innerp{\va_j\va_j^*, \widehat{\vx}\widehat{\vx}^{*}-\vx_0\vx_0^*}\right|\right)\left(\sum_{j\in I}|\eta_j|(\abs{\innerp{\va_j,\widehat{\vx}}}+\abs{\innerp{\va_j,\vx_0}})\right)\\
\geq & \left(\sum_{j\in I}\left|\innerp{\va_j\va_j^*, \widehat{\vx}\widehat{\vx}^{*}-\vx_0\vx_0^*}\right|\right)^2-\frac{1}{2}\left(\sum_{j\in I}\left|\innerp{\va_j\va_j^*, \widehat{\vx}\widehat{\vx}^{*}-\vx_0\vx_0^*}\right|\right)^2\\
=&\frac{1}{2}\left(\sum_{j\in I}\left|\innerp{\va_j\va_j^*, \widehat{\vx}\widehat{\vx}^{*}-\vx_0\vx_0^*}\right|\right)^2=\frac{1}{2}\|\mathcal{A}_I({\widehat{\vx}\widehat{\vx}^{*}-\vx_0\vx_0^*})\|_1^2,
\end{aligned}
\end{equation}
}
where the second inequality follows from  (\ref{eqn: case1}).
Combining  (\ref{eqn: case1_temp}) and (\ref{eqn: error_lower_final}), we obtain that
\[
\|\mathcal{A}_I({ \widehat{\vx}\widehat{\vx}^{*}-\vx_0\vx_0^*})\|_1\leq 2\sqrt{C_{\beta_0}}\sqrt{m}\|\ve\|_2(\|\widehat{\vx}\|_2+\|\vx_0\|_2),
\]
which implies the conclusion.

{\bf{Case 2:}}
We next consider the case where
\begin{equation}\label{eqn: case2}
\sum_{j\in I}\left|\innerp{\va_j\va_j^*, \widehat{\vx}\widehat{\vx}^{*}-\vx_0\vx_0^*}\right|< 4\sum_{j\in I}|\eta_j|(\abs{\innerp{\va_j,\widehat{\vx}}}+\abs{\innerp{\va_j,\vx_0}}).
\end{equation}
According to (\ref{eqn: case2}), we have
\begin{equation*}
\begin{aligned}
\|\mathcal{A}_I({ \widehat{\vx}\widehat{\vx}^{*}-\vx_0\vx_0^*})\|_1&\leq 4\|\ve\|_2\sqrt{\sum_{j\in I}(\abs{\innerp{\va_j,\widehat{\vx}}}+\abs{\innerp{\va_j,\vx_0}})^2}\\
&\leq 4\sqrt{2C_{\beta_0}}\sqrt{m}\|\ve\|_2(\|\vx_0\|_2+\|\widehat{\vx}\|_2).
\end{aligned}
\end{equation*}
The last inequality above follows from (\ref{eq:vxup}). We arrive at the conclusion.

It remains to prove (\ref{eq:vxup}).
A simple observation  is that, for $j\in I$,
\begin{equation*}\label{eqn: error_lower_2}
\begin{aligned}
\left(\abs{\innerp{\va_j,\vx_0}}+\abs{\innerp{\va_j,\widehat{\vx}}}\right)^2& {\leq \frac{1}{\beta_0m}\sum_{t\in \{1,\ldots,m\}\setminus I } (\abs{\innerp{\va_t,\vx_0}}+\abs{\innerp{\va_t,\widehat{\vx}}})^2}\\
&\leq \frac{2}{\beta_0m}\sum_{t\in \{1,\ldots,m\}\setminus I } (\abs{\innerp{\va_t,\vx_0}}^2+\abs{\innerp{\va_t,\widehat{\vx}}}^2)\\
&=\frac{2}{\beta_0m}\|\mathcal{A}_{\{1,\ldots,m\}\setminus I}(\vx_0\vx_0^*+\widehat{\vx}\widehat{\vx}^*)\|_1\\
&\leq \frac{2}{\beta_0}\cdot\frac{1}{m}\|\mathcal{A}(\vx_0\vx_0^*+\widehat{\vx}\widehat{\vx}^*)\|_1\\
&\overset{(a)}\leq \frac{2C_+}{\beta_0}\|\vx_0\vx_0^*+\widehat{\vx}\widehat{\vx}^*\|_F\\
&\leq 2C_{\beta_0} (\|\vx_0\|_2^2+\|\widehat{\vx}\|_2^2)
\leq  2C_{\beta_0}(\|\vx_0\|_2+\|\widehat{\vx}\|_2)^2,
\end{aligned}
\end{equation*}
which implies (\ref{eq:vxup}). Here, inequality (a) follows from
 Theorem \ref{thm: RIP}.
\end{proof}

\section{Proof of Theorem \ref{thm: sharpness}}
Before presenting the proof of Theorem \ref{thm: sharpness}, we introduce the following lemmas.
\begin{lemma}(Hoeffding-type inequality) (\cite[Theorem 2.6.3]{Vershynin})\label{lem: hoeffding}
Let $x_1,\ldots,x_N$ be independent centered sub-Gaussian random variables, and let $K=\max_{l}\|x_{l}\|_{\psi_2}$. Then, for every $\va=(a_1,\ldots,a_N)\in \mathbb{R}^N$ and every $t\geq 0$, we have
\[
\mathbb{P}\left\{\left|\sum_{l=1}^N a_{l}x_{l}\right|\geq t\right\}\leq 2\exp\left(-\frac{c't^2}{K^2\|\va\|_2^2}\right),
\]
where $c'>0$ is an absolute constant.
\end{lemma}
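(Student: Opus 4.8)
The plan is to run the standard Chernoff-bound argument for sums of independent sub-Gaussian random variables. The first ingredient I would establish is a two-sided moment-generating-function bound: there is an absolute constant $C_1>0$ such that every centered sub-Gaussian random variable $x$ with $\|x\|_{\psi_2}\le K$ satisfies $\mathbb{E}\exp(\lambda x)\le \exp(C_1\lambda^2 K^2)$ for all $\lambda\in\mathbb{R}$. This is obtained from the $p$-th moment definition of $\|\cdot\|_{\psi_2}$ used in the paper: from $\|x\|_{\psi_2}\le K$ one has $\mathbb{E}|x|^p\le (K\sqrt{p})^p$, and expanding $\exp(\lambda^2 x^2)$ as a power series and summing (using $k^k/k!\le e^k$) gives $\mathbb{E}\exp(\lambda^2 x^2)\le 2$ for $|\lambda|K$ below an absolute constant; the elementary inequality $e^{s}\le s+e^{s^2}$ then yields $\mathbb{E}\exp(\lambda x)\le\exp(C_1\lambda^2K^2)$ in that range, and the centering hypothesis $\mathbb{E}x=0$ together with a convexity/truncation argument upgrades it to all real $\lambda$. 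This is precisely Proposition 2.6.1 in \cite{Vershynin}, which is already invoked elsewhere in the paper.

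Next I would use independence to factorize the moment-generating function of the weighted sum. Set $S:=\sum_{l=1}^N a_l x_l$. Each $a_l x_l$ is centered sub-Gaussian with $\|a_l x_l\|_{\psi_2}\le |a_l| K$, so independence gives
\[
\mathbb{E}\exp(\lambda S)=\prod_{l=1}^N \mathbb{E}\exp(\lambda a_l x_l)\le \prod_{l=1}^N \exp\bigl(C_1\lambda^2 a_l^2 K^2\bigr)=\exp\bigl(C_1\lambda^2 K^2\|\va\|_2^2\bigr).
\]
Applying Markov's inequality to $\exp(\lambda S)$ with $\lambda>0$ then gives $\mathbb{P}(S\ge t)\le \exp\bigl(-\lambda t+C_1\lambda^2 K^2\|\va\|_2^2\bigr)$, and choosing $\lambda=t/(2C_1 K^2\|\va\|_2^2)$ to minimize the exponent yields $\mathbb{P}(S\ge t)\le \exp\bigl(-t^2/(4C_1 K^2\|\va\|_2^2)\bigr)$.

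Finally, since each $-x_l$ is also centered sub-Gaussian with the same $\psi_2$-norm, the identical bound holds for $\mathbb{P}(-S\ge t)$; a union bound over these two events produces the factor $2$ and the claimed inequality with $c'=1/(4C_1)$. The only step carrying any subtlety is the first one — passing from the moment definition of the sub-Gaussian norm together with the centering assumption to an exponential-moment bound valid for every real $\lambda$; once that is in hand the remainder is a routine one-parameter Chernoff optimization. Since the statement is quoted verbatim from \cite[Theorem 2.6.3]{Vershynin}, in the paper I would simply cite it, but the route sketched above is the self-contained proof one would write out.
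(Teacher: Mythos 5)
Your proposal is correct: the MGF bound from Proposition 2.6.1 of \cite{Vershynin}, factorization by independence, Chernoff optimization in $\lambda$, and a two-sided union bound is precisely the standard proof of the cited Theorem 2.6.3, and the one subtle step you flag (upgrading the exponential-moment bound to all real $\lambda$ using the centering hypothesis) is handled correctly. The paper itself offers no proof of this lemma --- it is imported verbatim from \cite{Vershynin} --- so, as you note, a citation is all that is required there, and your self-contained derivation matches the source's argument.
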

\begin{theorem}(Norm of matrices with sub-Gaussian entries)(\cite[Theorem 4.4.5]{Vershynin})\label{thm: upper_A} Let $\vA$ be an $m\times d$ random matrix whose entries $A_{j,l}$ are independent, mean zero, sub-Gaussian random variables. Then, for any $t>0$, the following holds
with probability at least $1-2\exp(-t^2)$:
\[
\|\vA\|\leq C'K(\sqrt{m}+\sqrt{d}+t).
\]
 Here $K=\max_{j,l}\|A_{j,l}\|_{\psi_2}$ and $C'>0$ is an absolute constant.
\end{theorem}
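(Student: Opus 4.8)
The plan is to prove this operator-norm bound through the standard discretization ($\varepsilon$-net) argument, using the Hoeffding-type concentration of Lemma \ref{lem: hoeffding} as the per-point tail estimate. The starting point is the variational formula
\[
\|\vA\|=\sup_{\vx\in S^{d-1}}\sup_{\vy\in S^{m-1}}\langle \vA\vx,\vy\rangle,
\]
which rewrites the spectral norm as the supremum of the bilinear form $\langle \vA\vx,\vy\rangle=\sum_{j=1}^m\sum_{l=1}^d A_{j,l}x_l y_j$ over the product of the two unit spheres $S^{d-1}$ and $S^{m-1}$. The idea is to replace these two uncountable spheres by finite $\varepsilon$-nets, control the bilinear form at each fixed pair of net points by a sub-Gaussian tail bound, and then take a union bound over the net.

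First I would fix $\varepsilon=1/4$ and select an $\varepsilon$-net $\mathcal N$ of $S^{d-1}$ and an $\varepsilon$-net $\mathcal M$ of $S^{m-1}$. The usual volumetric covering estimate gives $\#\mathcal N\le (1+2/\varepsilon)^{d}=9^{d}$ and $\#\mathcal M\le 9^{m}$. The approximation step is a short three-term splitting: for arbitrary $\vx\in S^{d-1}$ and $\vy\in S^{m-1}$, choosing $\vx_0\in\mathcal N$, $\vy_0\in\mathcal M$ with $\|\vx-\vx_0\|_2\le\varepsilon$ and $\|\vy-\vy_0\|_2\le\varepsilon$, one writes $\langle \vA\vx,\vy\rangle=\langle \vA\vx_0,\vy_0\rangle+\langle \vA(\vx-\vx_0),\vy\rangle+\langle \vA\vx_0,\vy-\vy_0\rangle$ and bounds each of the last two terms by $\varepsilon\|\vA\|$ via Cauchy--Schwarz. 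Taking the supremum over $\vx,\vy$ yields $\|\vA\|\le(1-2\varepsilon)^{-1}\max_{\vx_0\in\mathcal N,\vy_0\in\mathcal M}\abs{\langle \vA\vx_0,\vy_0\rangle}=2\max_{\vx_0\in\mathcal N,\vy_0\in\mathcal M}\abs{\langle \vA\vx_0,\vy_0\rangle}$. Thus it suffices to control the maximum of the bilinear form over the finite set $\mathcal N\times\mathcal M$.

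Next, for each fixed $(\vx_0,\vy_0)\in\mathcal N\times\mathcal M$ I would apply Lemma \ref{lem: hoeffding} with $N=md$, independent centered random variables $\{A_{j,l}\}$ (with $\max_{j,l}\|A_{j,l}\|_{\psi_2}=K$), and fixed coefficients $\{x_l y_j\}$, whose $\ell_2$-norm equals $\|\vx_0\|_2\|\vy_0\|_2=1$. This gives
\[
\mathbb P\bigl\{\abs{\langle \vA\vx_0,\vy_0\rangle}\ge u\bigr\}\le 2\exp\!\Bigl(-c'u^2/K^2\Bigr)
\]
for every $u\ge 0$. Combining the union bound over $\mathcal N\times\mathcal M$ with the approximation step gives
\[
\mathbb P\bigl\{\|\vA\|\ge 2u\bigr\}\le 9^{d+m}\cdot 2\exp\!\Bigl(-c'u^2/K^2\Bigr)=2\exp\!\Bigl(-c'u^2/K^2+(d+m)\log 9\Bigr).
\]

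Finally I would choose $u$ so that the exponent is dominated by $-t^2$: setting $u=C'K(\sqrt m+\sqrt d+t)$ with $C'$ a sufficiently large absolute constant makes $c'u^2/K^2\ge (d+m)\log 9+t^2$, using $(\sqrt m+\sqrt d+t)^2\ge m+d+t^2$, which absorbs the cardinality factor $9^{d+m}$ and leaves a failure probability of at most $2\exp(-t^2)$. Relabeling $2C'$ as $C'$ then gives the stated bound $\|\vA\|\le C'K(\sqrt m+\sqrt d+t)$. I expect the approximation (net-to-sphere) step to be the main conceptual point, since it is what upgrades the pointwise sub-Gaussian estimate into a uniform bound on the operator norm; the remaining work is the routine balancing of the union-bound exponent against the net cardinality $9^{d+m}$.
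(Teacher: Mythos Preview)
The paper does not prove this statement; it merely cites it as \cite[Theorem 4.4.5]{Vershynin}. Your proposed proof is correct and is precisely the standard $\varepsilon$-net argument that Vershynin gives for Theorem~4.4.5 in that reference, so there is nothing to compare.
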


\begin{lemma}\label{lem: lower}
Assume that $\va_j,$  $j=1,\ldots,m,$  are
i.i.d. complex sub-Gaussian random vectors,  whose elements are independently drawn from some sub-Gaussian random variable $\xi$ with $\|\xi\|_{\psi_2}\leq C_{\psi}$, $\mathbb{E}\xi=0$, $\mathbb{E}|\xi|^2=1$, and $\mathbb{E}|\xi|^4=\gamma>1$. Assume that $\boldsymbol{\ve}=(\eta_1,\dots,\eta_m)\in \R^m$ satisfies
 $\abs{\sum_{j=1}^m\eta_j}\geq C \sqrt{m}\|\boldsymbol{\ve}\|_2$ with $C\in (0,1)$.
Then   the following holds with probability at least $1-4\exp\left(-{c}m\right)$:
\[
\Bigg|\sum_{j=1}^m\eta_j\left|\langle \va_j,\vx\rangle\right|\Bigg|\gtrsim \sqrt{m}\|\ve\|_2\|\vx\|_2\quad \text{for all}\  \vx\in \mathbb{C}^d,
\]
provided   $m\gtrsim d$. Here, $c$ is a positive constant depending on $C_{\psi}$ and $\gamma$.
\end{lemma}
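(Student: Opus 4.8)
The plan is to use the hypothesis $\abs{\sum_{j=1}^m\eta_j}\ge C\sqrt m\norm{\ve}$ to extract from $\abs{\innerp{\va_j,\vx}}$ a ``signal part'' that survives summation against the $\eta_j$. By homogeneity it suffices to prove the inequality for every unit vector $\vu\in\mathbb C^d$. Fix such a $\vu$, set $\mu_{\vu}:=\mathbb{E}\abs{\innerp{\va_1,\vu}}$ (the same for all $j$ since the $\va_j$ are i.i.d.) and $Y_j:=\abs{\innerp{\va_j,\vu}}-\mu_{\vu}$. Then
\[
\sum_{j=1}^m\eta_j\abs{\innerp{\va_j,\vu}}=\mu_{\vu}\sum_{j=1}^m\eta_j+\sum_{j=1}^m\eta_j Y_j,
\qquad\text{so}\qquad
\Big|\sum_{j=1}^m\eta_j\abs{\innerp{\va_j,\vu}}\Big|\ge\mu_{\vu}\Big|\sum_{j=1}^m\eta_j\Big|-\Big|\sum_{j=1}^m\eta_j Y_j\Big| .
\]
The first term on the right is the signal part; the second is a fluctuation that I will show is of strictly smaller size once $m\gtrsim d$ with a large enough implied constant.

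First I would establish a uniform two-sided estimate $c_1\le\mu_{\vu}\le 1$. The upper bound is Cauchy--Schwarz together with $\mathbb{E}\abs{\innerp{\va_1,\vu}}^2=\norm{\vu}^2=1$ (which uses only $\mathbb{E}\xi=0$, $\mathbb{E}\abs\xi^2=1$ and independence). For the lower bound, apply \eqref{eqn: lower_exp_z} to $z=\innerp{\va_1,\vu}$: since $\mathbb{E}\abs z^2\le(\mathbb{E}\abs z)^{2/3}(\mathbb{E}\abs z^4)^{1/3}$ we get $\mu_{\vu}\ge(\mathbb{E}\abs z^2)^{3/2}/(\mathbb{E}\abs z^4)^{1/2}=1/(\mathbb{E}\abs z^4)^{1/2}$, and $\mathbb{E}\abs z^4$ is bounded by a constant depending only on $C_\psi$ via \eqref{eqn: vector_subGaussian} (equivalently the moment computation in \eqref{eqn: temp1}--\eqref{eqn: temp2}). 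Hence $c_1\le\mu_{\vu}\le 1$ for all unit $\vu$, with $c_1$ depending only on $C_\psi,\gamma$, so that $\mu_{\vu}\abs{\sum_j\eta_j}\ge c_1C\sqrt m\norm{\ve}$ deterministically (given $\ve$).

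Next I would control the fluctuation term uniformly in $\vu$ by a net argument. The $Y_j$ are independent, mean zero and sub-Gaussian with $\|Y_j\|_{\psi_2}\le C_2$ (a constant, by centering $\abs{\innerp{\va_j,\vu}}$), so Lemma \ref{lem: hoeffding} gives, for each fixed $\vu$,
\[
\mathbb{P}\Big(\Big|\sum_{j=1}^m\eta_j Y_j\Big|\ge \delta\sqrt m\norm{\ve}\Big)\le 2\exp\!\Big(-\tfrac{c'\delta^2 m}{C_2^2}\Big).
\]
Let $\mathcal N$ be an $\epsilon$-net of the unit sphere of $\mathbb C^d$ with $\#\mathcal N\le(3/\epsilon)^{2d}$ and union bound over $\mathcal N$; with probability at least $1-2(3/\epsilon)^{2d}\exp(-c'\delta^2 m/C_2^2)$ we get $\abs{\sum_j\eta_j\abs{\innerp{\va_j,\vu_0}}}\ge(c_1C-\delta)\sqrt m\norm{\ve}$ for all $\vu_0\in\mathcal N$. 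To pass from $\mathcal N$ to an arbitrary unit $\vu$, pick $\vu_0\in\mathcal N$ with $\norm{\vu-\vu_0}\le\epsilon$ and use $\bigl|\abs{\innerp{\va_j,\vu}}-\abs{\innerp{\va_j,\vu_0}}\bigr|\le\abs{\innerp{\va_j,\vu-\vu_0}}$ followed by Cauchy--Schwarz:
\[
\Big|\sum_{j=1}^m\eta_j\bigl(\abs{\innerp{\va_j,\vu}}-\abs{\innerp{\va_j,\vu_0}}\bigr)\Big|\le\norm{\ve}\,\norm{\vA(\vu-\vu_0)}\le\norm{\ve}\,\|\vA\|\,\epsilon,
\]
where $\vA$ is the sensing matrix with $(\vA\vx)_j=\innerp{\va_j,\vx}$; by Theorem \ref{thm: upper_A} (applied with $t\asymp\sqrt m$, using $m\gtrsim d$) we have $\|\vA\|\le C_4\sqrt m$ with probability at least $1-2\exp(-m)$. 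Choosing $\delta=c_1C/2$ and $\epsilon=c_1C/(4C_4)$ — both absolute constants depending only on $C,C_\psi,\gamma$ — makes each correction at most half the signal part, so $\abs{\sum_j\eta_j\abs{\innerp{\va_j,\vu}}}\ge\tfrac14 c_1C\sqrt m\norm{\ve}$ for every unit $\vu$; rescaling by $\norm\vx$ proves the lemma, with total failure probability at most $4\exp(-cm)$ once $m$ is a large enough constant multiple of $d$, so that $2d\log(3/\epsilon)$ is dominated by $c'\delta^2 m/(2C_2^2)$.

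The main obstacle, and the reason one needs $m\gtrsim d$ with a sufficiently large hidden constant rather than merely $m\ge d$, is that after the union bound over an $\epsilon$-net of the sphere (whose log-cardinality is $\Theta(d)$) the Hoeffding bound on $\sum_j\eta_j Y_j$ is only $O(\sqrt m\norm{\ve})$ — the \emph{same order} as the signal term $c_1C\sqrt m\norm{\ve}$. One therefore cannot make the error negligible; instead one fixes the net accuracy $\epsilon$ and the Hoeffding deviation $\delta$ as small absolute constants (chosen in terms of $c_1$, $C$, $C_2$ and the operator-norm constant $C_4$) and pays for it with a large multiplicative constant in $m\gtrsim d$. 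Establishing the uniform lower bound $\mu_{\vu}\ge c_1>0$ in the sub-Gaussian, non-rotation-invariant setting is the other point requiring care; the remaining steps are the routine net-plus-operator-norm machinery.
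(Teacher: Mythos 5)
Your proposal is correct and follows essentially the same route as the paper's proof: a uniform lower bound on $\mathbb{E}\abs{\innerp{\va_1,\vu}}$ via the moment inequality (\ref{eqn: lower_exp_z}), the Hoeffding-type bound of Lemma \ref{lem: hoeffding} on the centered fluctuation, and an $\epsilon$-net plus operator-norm (Theorem \ref{thm: upper_A}) argument with constants $\delta,\epsilon$ fixed so that the two error terms together eat only a fixed fraction of the signal term $\mu_{\vu}\abs{\sum_j\eta_j}\gtrsim\sqrt{m}\|\ve\|_2$. The only cosmetic quibble is the phrase ``each correction at most half the signal part'' (two corrections each equal to half would cancel the signal), but your actual choices give losses of one half and one quarter, so the stated conclusion $\ge\tfrac14 c_1C\sqrt{m}\|\ve\|_2$ is right.
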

\begin{proof}
First of all, we consider an upper bound of $\left|\sum_{j=1}^m\left|\eta_j\right|\cdot\left|\langle \va_j,\vx\rangle\right|\right|$. We can obtain that the following holds for all $\vx\in \mathbb{C}^d$ with probability at least $1-2\exp(-m)$:
 \begin{equation}\label{eqn: temp_upper0}
 \begin{aligned}
 \left|\sum_{j=1}^m\left|\eta_j\right|\cdot\left|\langle \va_j,\vx\rangle\right|\right|
 \leq \|\ve\|_2 \cdot \left(\sum_{j=1}^m \abs{\innerp{\va_j,\vx}}^2\right)^{1/2}
 \leq 3\cdot C_\psi\cdot C'\cdot  \sqrt{m}\|\ve\|_2\|\vx\|_2,  \end{aligned}
\end{equation}
where $C'>0$ is the positive absolute constant defined in Theorem \ref{thm: upper_A}. Here, the second inequality follows from Theorem \ref{thm: upper_A} with $t=\sqrt{m}$ and $m\geq C_2d$ for some $C_2\geq 1$, which will be chosen later. 

Then we begin to  consider   the lower bound of  $\left|\sum_{j=1}^m\eta_j\left|\langle \va_j,\vx\rangle\right|\right|$.  { For convenience, assume that $\|\vx\|_2=1$.} When $\vx$ is fixed, we can use (\ref{eqn: vector_subGaussian}) to obtain that $\||\langle\va_1, \vx\rangle|\|_{\psi_2}\leq \sqrt{C'}C_{\psi}$. Applying Lemma \ref{lem: hoeffding}, we have


 \begin{equation}\label{eq:lowergauss}
 \mathbb{P}\left\{\left|\sum_{j=1}^m \eta_j|\langle \va_{j},\vx\rangle|-\sum_{j=1}^m\eta_j\mathbb{E}|\langle \va_{j},\vx\rangle|\right|\geq t\right\}\leq 2 \exp\left(-\frac{c't^2}{C'C_{\psi}^{2}\|\boldsymbol{\ve}\|_2^2}\right), \quad \text{ for any } t\geq 0.
 \end{equation}
 Taking $t=C_1\sqrt{m}\|\boldsymbol{\ve}\|_2$ in (\ref{eq:lowergauss}) for some $C_1>0$, we obtain that
the following holds with probability at least $1-2 \exp\left(-\frac{c'C_1^2}{C'C_{\psi}^2}m\right)$:
 \begin{equation}\label{eqn: eta_lower_temp1}
 \begin{aligned}
 \left|\sum_{j=1}^m \eta_j|\langle \va_j,\vx\rangle|\right|& \geq
 \left|\sum_{j=1}^m\ve_{j}\mathbb{E}|\langle \va_{j},\vx\rangle|\right|-C_1\sqrt{m}\|\boldsymbol{\ve}\|_2\\
 &\overset{(a)}\geq C_0\left|\sum_{j=1}^m\ve_{j}\right|-C_1\sqrt{m}\|\boldsymbol{\ve}\|_2\\
 &\overset{(b)}\geq \left(C_0\cdot C\cdot \sqrt{m}-C_1\sqrt{m}\right)\|\boldsymbol{\ve}\|_2 \geq \left(C_0\cdot C-C_1\right)\sqrt{m}\|\ve\|_2,
 \end{aligned}
 \end{equation}
 The inequality $(a)$ can be derived using the following expression:
 \begin{equation}
 \mathbb{E}|\langle \va_{j},\vx\rangle| \geq \sqrt{\frac{ (\mathbb{E}|\langle \va_{j},\vx\rangle|^2)^3}{ \mathbb{E}|\langle \va_{j},\vx\rangle|^4}}\geq \sqrt{\frac{1}{\gamma}}=:C_0,
 \end{equation}
 where we use (\ref{eqn: lower_exp_z}) and the definition of $\|\cdot\|_{\psi_2}$.
 The inequality $(b)$ is based on $\left|\sum_{j=1}^m\eta_j\right|\geq C \sqrt{m}\|\boldsymbol{\ve}\|_2$. {The constant  $C_1$ is chosen such  that  $C_0\cdot C>C_1$, with the specific value of $C_1$ to be determined later. }

Assume that $\mathcal{N}$ is an $\epsilon$-net of the unit complex sphere in $\mathbb{C}^d$ and hence the covering number $\#\mathcal{N}\leq (1+\frac{2}{\epsilon})^{2d}$.
 By the union bound, we obtain that
 \begin{equation}\label{eqn: cover_uniform_up2}
\left|\sum_{j=1}^m\eta_j|\langle \va_j,\vx\rangle|\right|\geq \left(C_0\cdot C-C_1\right) \sqrt{m}\|\boldsymbol{\ve}\|_2\qquad \text { for any }\vx\in \mathcal{N}
 \end{equation}
  holds with probability at least \[1-2 \exp\left(-\frac{c'C_1^2}{C'C_{\psi}^2}m+2\log(1+2/\epsilon)d\right)\geq 1-2 \exp\left(-\left(\frac{c'C_1^2}{C'C_{\psi}^2}-\frac{2\log(1+2/\epsilon)}{C_2}\right)m\right).\]
For any $\vx\in \mathbb{C}^d$ with $\|\vx\|_2=1$, there exists some $\vx'\in \mathcal{N}$ such that $\|\vx'-\vx\|_2\leq \epsilon$. Then
 \begin{equation}\label{eqn: cover_uniform_up1}
 \begin{aligned}
\left| \Bigg|\sum_{j=1}^m\eta_j\left|\langle \va_j, \vx'\rangle\right|\Bigg|-\Bigg|\sum_{j=1}^m\eta_j\left|\langle \va_j, \vx\rangle\right|\Bigg|\right|\leq& \sum_{j=1}^m|\eta_j|\cdot|\langle\va_j, \vx'-\vx\rangle|\\
=&\|\vx'-\vx\|_2\sum_{j=1}^m|\eta_j|\cdot\left|\left\langle\va_j, \frac{\vx'-\vx}{\|\vx'-\vx\|_2}\right\rangle\right|\\
\leq & 3 \cdot \epsilon\cdot C_\psi\cdot C'\cdot \sqrt{m}\|\boldsymbol{\ve}\|_2.
\end{aligned}
 \end{equation}
 The last line above is based on (\ref{eqn: temp_upper0}).
 Combining  (\ref{eqn: cover_uniform_up2}) and (\ref{eqn: cover_uniform_up1}), we obtain that
\begin{equation}
\begin{aligned}
 \min_{\|\vx\|=1} \left|\sum_{j=1}^m\eta_j\abs{\innerp{\va_j,\vx}}\right|\geq& \left(C_0\cdot C-C_1\right) \sqrt{m}\|\boldsymbol{\ve}\|_2- 3 \cdot \epsilon\cdot C_\psi\cdot C' \cdot\sqrt{m}\|\boldsymbol{\ve}\|_2\\
=&\left(C_0\cdot C-C_1- 3 \cdot \epsilon\cdot C_\psi\cdot C'\right)\sqrt{m}\|\boldsymbol{\ve}\|_2,
\end{aligned}
\end{equation}
holds with probability at least $1-2 \exp\left(-(\frac{c'C_1^2}{C'C_{\psi}^2}-\frac{2\log(1+2/\epsilon))}{C_2})m\right)-2\exp(-m).$ Therefore, choosing   $\epsilon$ satisfying $3\cdot \epsilon\cdot C_\psi\cdot C' \leq \frac{C_0\cdot C}{4}$, $C_1$ satisfying $C_1<\frac{C_0\cdot C}{4}$, and $C_2$ satisfying $\frac{c'C_1^2}{4C'C_{\psi}^2}\geq \frac{\log(1+2/\epsilon))m}{C_2}$, we  obtain that
\[
\min_{\|\vx\|_2=1}\left|\sum_{j=1}^m\eta_j\left|\langle \va_j, \vx\rangle\right|\right| \geq \frac{C_0\cdot C}{4} \sqrt{m}\|\boldsymbol{\ve}\|_2,
\]
holds with probability at least $1-2 \exp\left(-\frac{1}{2}\frac{c'C_1^2}{C'C_{\psi}^2}m\right)-2\exp(-m)\geq 1-4\exp(-{c}m)$, where ${c}=\min\left\{1,\frac{c'C_1^2}{2C'C_{\psi}^2}\right\}>0$.
\end{proof}
\begin{lemma}\label{lem: upper}
Assume that $\va_j,$ $j=1,\ldots,m,$  are i.i.d. complex sub-Gaussian random vectors, whose elements are independently drawn from some sub-Gaussian random variable $\xi$ with $\|\xi\|_{\psi_2}\leq C_{\psi}$, $\mathbb{E}\xi=0$, $\mathbb{E}\xi^2=0$, $\mathbb{E}|\xi|^2=1$, and $\mathbb{E}|\xi|^4=\gamma>1$.
Assume that $m\gtrsim d$.
Then the following holds with probability at least $1-2 \exp\left(-m\right)$:
\[
\sum_{j=1}^m|\langle \va_j,\boldsymbol{u}\rangle|\cdot|\langle \va_j,\boldsymbol{v}\rangle|\,\,\lesssim\,\, m\|\vu\|_2 \|\vv\|_2,\quad \text{ for all } \boldsymbol{u},\boldsymbol{v}\in \mathbb{C}^d.
\]
\end{lemma}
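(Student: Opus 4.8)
The plan is to reduce the bilinear sum to the operator norm of the measurement matrix via Cauchy--Schwarz, and then quote Theorem \ref{thm: upper_A}; the uniformity over all pairs $(\vu,\vv)$ will come for free, so no covering argument is needed. Let $\vA\in\mathbb{C}^{m\times d}$ be the matrix whose $j$-th row is $\va_j^*$, so that $(\vA\vx)_j=\langle\va_j,\vx\rangle$ for every $\vx\in\mathbb{C}^d$. First I would apply the Cauchy--Schwarz inequality to get, for any $\vu,\vv\in\mathbb{C}^d$,
\[
\sum_{j=1}^m|\langle\va_j,\vu\rangle|\cdot|\langle\va_j,\vv\rangle|
\le\Big(\sum_{j=1}^m|\langle\va_j,\vu\rangle|^2\Big)^{1/2}\Big(\sum_{j=1}^m|\langle\va_j,\vv\rangle|^2\Big)^{1/2}
=\|\vA\vu\|_2\,\|\vA\vv\|_2\le\|\vA\|^2\,\|\vu\|_2\,\|\vv\|_2.
\]
Since $\|\vA\|$ is a single scalar random variable, this estimate already holds simultaneously for all $\vu,\vv\in\mathbb{C}^d$ on the event where $\|\vA\|$ is controlled.

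Next I would bound $\|\vA\|$. The entries of $\vA$ are $\overline{a}_{j,l}$, which are independent, mean zero, and have $\psi_2$-norm at most $C_\psi$ (equivalently, write $\vA=\vA_1+\mathrm{i}\vA_2$ with $\vA_1,\vA_2$ real $m\times d$ matrices whose entries are independent, mean-zero, sub-Gaussian with $\psi_2$-norm $\lesssim C_\psi$, being the real and imaginary parts of the $\xi$'s). Applying Theorem \ref{thm: upper_A} with $t=\sqrt{m}$, and using $m\gtrsim d$ so that $\sqrt{d}\le\sqrt{m}$, gives
\[
\|\vA\|\le C'C_\psi\big(\sqrt{m}+\sqrt{d}+\sqrt{m}\big)\le 3C'C_\psi\sqrt{m}
\]
with probability at least $1-2\exp(-m)$ (if one prefers to invoke the real version of Theorem \ref{thm: upper_A}, apply it to $\vA_1$ and $\vA_2$ separately and combine by $\|\vA\|\le\|\vA_1\|+\|\vA_2\|$, adjusting the absolute constant and the exponent accordingly). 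Hence $\|\vA\|^2\lesssim m$ on this event.

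Combining the two displays yields
\[
\sum_{j=1}^m|\langle\va_j,\vu\rangle|\cdot|\langle\va_j,\vv\rangle|\le\|\vA\|^2\|\vu\|_2\|\vv\|_2\lesssim m\|\vu\|_2\|\vv\|_2
\]
simultaneously for all $\vu,\vv\in\mathbb{C}^d$, which is the claim. There is no real obstacle here: the only mild point is the passage from real to complex matrices in Theorem \ref{thm: upper_A}, handled by the real/imaginary splitting, and the uniformity over $(\vu,\vv)$ is automatic once the problem is phrased through the operator norm of $\vA$.
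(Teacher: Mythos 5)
Your proof is correct and follows essentially the same route as the paper: Cauchy--Schwarz to reduce the bilinear sum to $\|\vA\vu\|_2\|\vA\vv\|_2\le\|\vA\|^2\|\vu\|_2\|\vv\|_2$, then Theorem \ref{thm: upper_A} with $t=\sqrt{m}$ and $m\gtrsim d$ to get $\|\vA\|^2\lesssim m$. The only difference is that you spell out the real/imaginary splitting needed to apply the (real) operator-norm bound to a complex matrix, a detail the paper leaves implicit.
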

\begin{proof}
 Taking  $t=\sqrt{m}$ in Theorem \ref{thm: upper_A}, we can   obtain that
\begin{equation*}
\sum_{j=1}^m\abs{\langle \va_j,\boldsymbol{u}\rangle|\cdot|\langle \va_j,\boldsymbol{v}\rangle} \leq (\sum_{j=1}^m \abs{\innerp{\va_j,\vu}}^2)^{1/2}
 (\sum_{j=1}^m \abs{\innerp{\va_j,\vv}}^2)^{1/2} \lesssim\,\, m\|\vu\|_2 \|\vv\|_2,
\end{equation*}
holds with probability at least $1-2 \exp\left(-m\right)$. Here, we use $m \gtrsim d$.
\end{proof}
We next present the proof of  Theorem \ref{thm: sharpness}.
\begin{proof}[Proof of Theorem \ref{thm: sharpness}]
Recall that $ \widehat{\vx}$ is a solution to $\min_{\vx\in \mathbb{C}^d} \sum_{j=1}^m\left(|\langle \va_j,\vx\rangle|-b_j\right)^2$.
For the case where $\widehat{\vx}=\boldsymbol{0}$, the conclusion follows from $\|\vx_0\|_2\gtrsim \frac{\|\ve\|_2}{\sqrt{m}}$. We next assume that $\widehat{\vx}\neq \boldsymbol{0}$.
We claim that
\begin{equation}\label{eqn: subgradient_equation}
\sum_{j=1}^{m}(|\langle\va_j,\widehat{\vx}\rangle|-b_j)|\langle\va_j,\widehat{\vx}\rangle|\,\,=\,\,0
\end{equation}
and postpone its argument until the end of the proof. Substituting  $b_j=\abs{\innerp{\va_j,\vx_0}}+\eta_j,$ $j=1,\ldots,m$ into   (\ref{eqn: subgradient_equation}), we obtain that
\[
\sum_{j=1}^m\eta_j|\langle\va_j,\widehat{\vx}\rangle|=\sum_{j=1}^{m}\left(|\langle\va_j,\widehat{\vx}\rangle|^2-|\langle\va_j,\vx_0\rangle|\cdot |\langle\va_j,\widehat{\vx}\rangle|\right),
\]
which implies
\begin{equation}\label{eq:bb}
\left|\sum_{j=1}^m\eta_j|\langle\va_j,\widehat{\vx}\rangle|\right|\leq   \sum_{j=1}^{m}|\langle\va_j,\widehat{\vx}\rangle|\cdot \big||\langle\va_j,\widehat{\vx}\rangle|-|\langle\va_j,\vx_0\rangle|\big|\leq \sum_{j=1}^{m}|\langle\va_j,\widehat{\vx}\rangle|\cdot \left|\langle\va_j,\widehat{\vx}-\exp(\mathrm{i}\theta)\vx_0\rangle\right|
\end{equation}
for any $\theta\in [0,2\pi)$.
We use (\ref{eq:bb}) to obtain that
\begin{equation}\label{eq:bb1}
\sqrt{m}\|\boldsymbol{\ve}\|_2\|\widehat{\vx}\|_2\overset{(a)}\lesssim \left|{\sum_{j=1}^m\eta_j|\langle\va_j,\widehat{\vx}\rangle|}\right|\leq \sum_{j=1}^{m}|\langle\va_j,\widehat{\vx}\rangle|\cdot\left|\langle\va_j,\widehat{\vx}-\exp(\mathrm{i}\theta)\vx_0\rangle\right|\overset{(b)}\lesssim m\|\widehat{\vx}\|_2\|\widehat{\vx}-\exp(\mathrm{i}\theta)\vx_0\|_2,
\end{equation}
for any $\theta\in [0,2\pi)$, where
 the inequalities  $(a)$ and $(b)$ follow from Lemma \ref{lem: lower} and Lemma \ref{lem: upper}, respectively.
According to (\ref{eq:bb1}),  we have
\[
\frac{\|\boldsymbol{\ve}\|_2}{\sqrt{m}}\,\,\lesssim\,\, \min_{\theta\in [0,2\pi)}\|\widehat{\vx}-\exp(\mathrm{i}\theta)\vx_0\|_2,
\]
which leads to the conclusion.

It remains to prove (\ref{eqn: subgradient_equation}). A simple observation  is that
\[
\sum_{j=1}^m\left(|\langle \va_j,\vx\rangle|-b_j\right)^2=\sum_{j=1}^m(\sqrt{(\va_{j,1}^T\vx_1+\va_{j,2}^T\vx_2)^2+(\va_{j,1}^T\vx_2-\va_{j,2}^T\vx_1)^2}-b_j)^2=:g(\vx_1,\vx_2),
\]
where $\vx_1$  and $\vx_2$ are the real and imaginary parts of $\vx$, i.e., $\vx=\vx_1+\vx_2\mathrm{i}$. Similarly, we use $\va_{j,1}$ and  $\va_{j,2}$ to denote the  real and imaginary parts of $\va_j$, i.e., $\va_j=\va_{j,1}+\va_{j,2}\mathrm{i}$.
The sub-gradient set of  $g(\vx_1,\vx_2)$ at $(\vx_1, \vx_2)$ is
\[
\begin{split}
\partial g(\vx_1, \vx_2):=\left(\begin{array}{c}
               \frac{\partial g(\vx_1, \vx_2)}{\partial \vx_1}\\
             \frac{\partial g(\vx_1, \vx_2)}{\partial \vx_2}
            \end{array}
\right)=&\Bigg\{
\left(\begin{array}{c}
              2\sum_{j=1}^m (\abs{\innerp{\va_j,\vx}}-b_j)(z_{j,1}\va_{j,1}-z_{j,2}\va_{j,2}) \\
              2\sum_{j=1}^m (\abs{\innerp{\va_j,\vx}}-b_j)(z_{j,1}\va_{j,2}+z_{j,2}\va_{j,1})
            \end{array}
\right): \\
&\qquad(z_{j,1}, z_{j,2})\in \R^2 \text{ is defined in } (\ref{eq:z1z2}) \text{ or } (\ref{eq:z1z20})
\Bigg\}.
\end{split}
\]
Here,
\begin{equation}\label{eq:z1z2}
z_{j,1}:=\frac{\va_{j,1}^T\vx_1+\va_{j,2}^T\vx_2}{\abs{\innerp{\va_j, \vx}}},  \quad
z_{j,2}:=\frac{\va_{j,1}^T\vx_2-\va_{j,2}^T\vx_1}{\abs{\innerp{\va_j, \vx}}}, \text{ if } \ \abs{\innerp{\va_j,\vx}}\neq 0.
\end{equation}
Otherwise, we require  $(z_{j,1},z_{j,2})\in \R^2 $ satisfies
\begin{equation}\label{eq:z1z20}
z_{j,1}^2+z_{j,2}^2\leq 1, \quad \text{ if }\  \abs{\innerp{\va_j,\vx}}= 0.
\end{equation}

Recall that $\widehat{\vx}=\widehat{\vx}_{1}+\widehat{\vx}_{2}\mathrm{i}$ is a solution to  (\ref{eqn: nonsparse_model}). Then we have
$\boldsymbol{0}\in \partial g(\vx_1, \vx_2)|_{\vx_1=\widehat{\vx}_1,\vx_2=\widehat{\vx}_2}$. Therefore, denote specific $\widehat{z}_{j,1}$ and $\widehat{z}_{j,2},$ $ j=1,\ldots,m$, such that
\[
\boldsymbol{0}=2\sum_{j=1}^m\left(\abs{\innerp{\va_j,\widehat{\vx}}}-b_j\right)(\widehat{z}_{j,1}\va_{j,1}-\widehat{z}_{j,2}\va_{j,2})=:\widehat{\vg}_1,
\]
and
\[
\boldsymbol{0}=2\sum_{j=1}^m\left(\abs{\innerp{\va_j,\widehat{\vx}}}-b_j\right)(\widehat{z}_{j,1}\va_{j,2}+\widehat{z}_{j,2}\va_{j,1})=:\widehat{\vg}_2.
\]
Then by direct calculation, we have
\begin{equation}
0=\langle \widehat{\vg}_1+\widehat{\vg}_2\mathrm{i},\widehat{\vx}_{1}+\widehat{\vx}_{2}\mathrm{i}\rangle=2\sum_{j=1}^{m}(|\langle\va_j,\widehat{\vx}\rangle|-b_j)|\langle\va_j,\widehat{\vx}\rangle|.
\end{equation}
\end{proof}
\section{Proof of Theorem \ref{thm: better_estimation}}
\begin{lemma}\label{lem: upper_new}
Assume that $\va_j,$  $j=1,\ldots,m\, (m\geq 3),$  are
i.i.d. complex sub-Gaussian random vectors,  whose elements are independently drawn from some sub-Gaussian random variable $\xi$ with $\|\xi\|_{\psi_2}\leq C_{\psi}$, $\mathbb{E}\xi=0$, $\mathbb{E}|\xi|^2=1$, and $\mathbb{E}|\xi|^4=\gamma>1$. Denote $\boldsymbol{\ve}=(\eta_1,\dots,\eta_m)\in \R^m$.
Then   the following holds with probability at least $1-2\exp\left(-m\right)-2\exp(-d\log m)$:
\begin{equation}\label{eqn: eta_upper}
\Bigg|\sum_{j=1}^m\eta_j\left|\langle \va_j,\vx\rangle\right|\Bigg|\lesssim \sqrt{d\log m}\cdot C_{\psi}\cdot\|\boldsymbol{\eta}\|_2+C_{\psi}\left|\sum_{j=1}^m\eta_j\right|\quad \text{for all}\  \vx\in \mathbb{S}^{d-1},
\end{equation}
provided   $m\gtrsim d$. Here, $\mathbb{S}^{d-1}$ is the unit complex sphere in $\mathbb{C}^{d-1}$.
\end{lemma}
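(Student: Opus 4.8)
The plan is to reduce the uniform bound to a fixed-$\vx$ estimate via an $\varepsilon$-net on $\mathbb{S}^{d-1}$, after first peeling off the $\bigl|\sum_j\eta_j\bigr|$ contribution. The starting point is the splitting
\[
\sum_{j=1}^m\eta_j|\langle\va_j,\vx\rangle|=\sum_{j=1}^m\eta_j\bigl(|\langle\va_j,\vx\rangle|-\mathbb{E}|\langle\va_j,\vx\rangle|\bigr)+\Bigl(\sum_{j=1}^m\eta_j\Bigr)\,\mathbb{E}|\langle\va_1,\vx\rangle|,
\]
which is legitimate because the $\va_j$ are i.i.d., so $g(\vx):=\mathbb{E}|\langle\va_j,\vx\rangle|$ does not depend on $j$. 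Since the coordinates of $\va_1$ are independent, centered, with $\mathbb{E}|a_l|^2=1$, one has $\mathbb{E}|\langle\va_1,\vx\rangle|^2=\|\vx\|_2^2=1$, hence $0\le g(\vx)\le 1$ for every $\vx\in\mathbb{S}^{d-1}$; thus the second term is at most $\bigl|\sum_j\eta_j\bigr|$, uniformly in $\vx$, which already supplies the $C_\psi\bigl|\sum_j\eta_j\bigr|$ summand of (\ref{eqn: eta_upper}). It then remains to bound $h(\vx):=\sum_{j}\eta_j\bigl(|\langle\va_j,\vx\rangle|-g(\vx)\bigr)$ uniformly by $\sqrt{d\log m}\,C_\psi\|\boldsymbol{\eta}\|_2$.

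For a fixed $\vx$, I would use that by (\ref{eqn: vector_subGaussian}) the variable $|\langle\va_j,\vx\rangle|$ has $\|\cdot\|_{\psi_2}\le\sqrt{C'}C_\psi$, hence its centering has $\psi_2$-norm at most $2\sqrt{C'}C_\psi$, and apply the Hoeffding-type inequality (Lemma \ref{lem: hoeffding}) to the independent centered variables $|\langle\va_j,\vx\rangle|-g(\vx)$ with the real coefficients $\eta_j$ at deviation level $t=C_1\sqrt{d\log m}\,C_\psi\|\boldsymbol{\eta}\|_2$. The $C_\psi^2\|\boldsymbol{\eta}\|_2^2$ factors cancel, giving
\[
\mathbb{P}\bigl\{|h(\vx)|\ge C_1\sqrt{d\log m}\,C_\psi\|\boldsymbol{\eta}\|_2\bigr\}\le 2\exp\bigl(-c_2C_1^2\,d\log m\bigr)
\]
for an absolute constant $c_2$. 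Then I would take an $\varepsilon$-net $\mathcal{N}$ of $\mathbb{S}^{d-1}$ with $\#\mathcal{N}\le(1+2/\varepsilon)^{2d}$ and $\varepsilon\asymp\min\{1,\sqrt{d\log m/m}\}$, so that $\log\#\mathcal{N}\lesssim d\log(1+2/\varepsilon)\lesssim d\log m$ (using $m\ge 3$); a union bound over $\mathcal{N}$, with $C_1$ chosen large enough (absolute), makes the displayed estimate hold simultaneously for all $\vx\in\mathcal{N}$ with probability at least $1-2\exp(-d\log m)$.

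The last step is the passage from $\mathcal{N}$ to all of $\mathbb{S}^{d-1}$: for $\vx\in\mathbb{S}^{d-1}$ pick $\vx'\in\mathcal{N}$ with $\|\vx-\vx'\|_2\le\varepsilon$ and estimate, by Cauchy--Schwarz,
\[
\Bigl|\sum_{j}\eta_j|\langle\va_j,\vx\rangle|-\sum_{j}\eta_j|\langle\va_j,\vx'\rangle|\Bigr|\le\|\vx-\vx'\|_2\sum_{j}|\eta_j|\,\Bigl|\bigl\langle\va_j,\tfrac{\vx-\vx'}{\|\vx-\vx'\|_2}\bigr\rangle\Bigr|\le\varepsilon\,\|\boldsymbol{\eta}\|_2\,\|\vA\|,
\]
where $\vA$ is the $m\times d$ matrix with rows $\va_j$. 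By Theorem \ref{thm: upper_A} with $t=\sqrt{m}$ and $m\gtrsim d$, $\|\vA\|\lesssim C_\psi\sqrt{m}$ with probability at least $1-2\exp(-m)$, so this perturbation is at most $\varepsilon C_\psi\sqrt{m}\|\boldsymbol{\eta}\|_2\lesssim C_\psi\sqrt{d\log m}\,\|\boldsymbol{\eta}\|_2$ by the choice of $\varepsilon$. Combining this with the bound on $h$ at the net points and with $g(\vx')\bigl|\sum_j\eta_j\bigr|\le C_\psi\bigl|\sum_j\eta_j\bigr|$ then yields (\ref{eqn: eta_upper}) with probability at least $1-2\exp(-m)-2\exp(-d\log m)$.

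I expect the only delicate point to be the calibration of $\varepsilon$: it must be small enough that the deterministic perturbation $\varepsilon\sqrt{m}$ is absorbed into $\sqrt{d\log m}$, yet large enough that $\log\#\mathcal{N}$ contributes only $O(d\log m)$ to the exponent so the Hoeffding deviation bound survives the union bound — the choice $\varepsilon\asymp\min\{1,\sqrt{d\log m/m}\}$ balances these (note that in the regime $m\lesssim d\log m$ one simply takes $\varepsilon$ constant). Everything else is routine concentration bookkeeping.
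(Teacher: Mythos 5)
Your proposal is correct and follows essentially the same route as the paper's proof: a Hoeffding-type bound for fixed $\vx$ on the centered sum, a union bound over an $\varepsilon$-net of $\mathbb{S}^{d-1}$ with deviation level $\asymp C_\psi\sqrt{d\log m}\,\|\boldsymbol{\eta}\|_2$, and a Lipschitz-type transfer to the whole sphere via the operator-norm bound $\|\vA\|\lesssim C_\psi\sqrt{m}$. The only cosmetic differences are your cleaner bound $\mathbb{E}|\langle\va_1,\vx\rangle|\le 1$ (versus $\sqrt{C'}C_\psi$ in the paper) and your choice $\varepsilon\asymp\min\{1,\sqrt{d\log m/m}\}$ where the paper simply takes $\varepsilon=1/\sqrt{m}$; both calibrations work.
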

\begin{proof}
For any fixed $\boldsymbol{x}\in \mathbb{S}^{d-1}$, by applying (\ref{eqn: vector_subGaussian}), we can deduce that   $\||\langle\va_1, \vx\rangle|\|_{\psi_2}\leq \sqrt{C'}C_{\psi}$ and $\mathbb{E}|\langle \va_1,\vx\rangle|\leq \sqrt{C'}C_{\psi}$. Taking $t=C_1\sqrt{d}\|\boldsymbol{\eta}\|_2$ in (\ref{eq:lowergauss}), we have
  \begin{equation}\label{eq:concentration_new}
 \mathbb{P}\left\{\left|\sum_{j=1}^m \eta_j|\langle \va_{j},\vx\rangle|-\sum_{j=1}^m\eta_j\mathbb{E}|\langle \va_{j},\vx\rangle|\right|\geq C_1\sqrt{d}\|\boldsymbol{\eta}\|_2\right\}\leq 2 \exp\left(-\frac{c'C_1^2d}{C'C_{\psi}^{2}}\right).
 \end{equation}
 The specific value of $C_1$ will be determined later.  Assume that $\mathcal{N}$ is an $\epsilon$-net of  $\mathbb{S}^{d-1}$ with  $\#\mathcal{N}\leq (1+\frac{2}{\epsilon})^{2d}$.
 By the union bound, we obtain that
 \begin{equation}\label{eqn: cover_uniform_new}
\left|\sum_{j=1}^m\eta_j|\langle \va_j,\vx\rangle|\right|\leq C_1\sqrt{d}\|\boldsymbol{\eta}\|_2+\left|\sum_{j=1}^m\eta_j\mathbb{E}|\langle \va_j,\vx\rangle|\right|\leq C_1\sqrt{d}\|\boldsymbol{\eta}\|_2+\sqrt{C'}C_\psi\left|\sum_{j=1}^m\eta_j\right|
 \end{equation}
 for any $\vx\in \mathcal{N}$ with probability at least $1-2 \exp\left(-\frac{c'C_1^2}{C'C_{\psi}^2}d+2\log(1+2/\epsilon)d\right)$.

 For any $\boldsymbol{x}\in \mathbb{S}^{d-1}$, there exists some $\boldsymbol{x}'\in \mathcal{N}$ such that $\|\boldsymbol{x}-\boldsymbol{x}'\|_2\leq \epsilon$. Therefore, we have
 \begin{equation}\label{eqn: eta_temp}
 \begin{split}
\left| \sum_{j=1}^m\eta_j|\langle \va_j,\vx\rangle|\right|&\leq \left|\sum_{j=1}^m\eta_j|\langle \va_j,\vx'\rangle|\right|+\sum_{j=1}^m|\eta_j|\cdot|\langle \va_j,\vx'-\vx\rangle|\\
 &\leq C_1\sqrt{d}\|\boldsymbol{\eta}\|_2+\sqrt{C'}C_\psi\left|\sum_{j=1}^m\eta_j\right|+3\cdot C_\psi\cdot C'\cdot  \sqrt{m}\|\ve\|_2\epsilon
 \end{split}
 \end{equation}
 with probability at least
 \[1-2\exp(-m)-2 \exp\left(-\frac{c'C_1^2}{C'C_{\psi}^2}d+2\log(1+2/\epsilon)d\right),\]provided that $m\gtrsim d$.  The last inequality in (\ref{eqn: eta_temp}) is based on (\ref{eqn: temp_upper0}).
 Taking $\epsilon=1/\sqrt{m}$ and $C_1^2=\frac{4C'C_\psi^2}{c'}\log(1+2\sqrt{m})$, we can immediately obtain the conclusion.
\end{proof}
\begin{proof}[Proof of Theorem \ref{thm: better_estimation}]
Since $b_j=|\langle \va_j,\vx_0\rangle|+\eta_j$, $j=1,\ldots,m$, and \[
\sum_{j=1}^m(\abs{\innerp{\va_j,\widehat{\vx}}}-b_j)^2\leq \sum_{j=1}^m(\abs{\innerp{\va_j,\vx_0}}-b_j)^2,
\]
we have
\begin{equation}\label{eqn: thm3_final}
\sum_{j=1}^m\left(|\langle \va_j,\widehat{\vx}\rangle|-|\langle \va_j,\vx_0\rangle|\right)^2\leq \sum_{j=1}^m2\eta_j\left(|\langle \va_j,\widehat{\vx}\rangle|-|\langle \va_j,\vx_0\rangle|\right).
\end{equation}
We will now estimate both sides of equation (\ref{eqn: thm3_final}) separately.
 Let's start by considering the left side of (\ref{eqn: thm3_final}).
Set $\xi_j:=\abs{\innerp{\va_j,\widehat{\vx}}}+\abs{\innerp{\va_j,\vx_0}}$ and assume that $\xi_{j_1}\leq \xi_{j_2}\leq \cdots\leq \xi_{j_m}$. Take $I=\{j_t\ |\ t\leq (1-\beta_0)m\}$, where $\beta_0>0$ is defined in Theorem \ref{thm: SRIP}. Then we have
\begin{equation}\label{eqn: thm3_temp1}
\begin{split}
\sum_{j=1}^m\left(|\langle \va_j,\widehat{\vx}\rangle|-|\langle \va_j,\vx_0\rangle|\right)^2&\geq \sum_{j\in I} \left(|\langle \va_j,\widehat{\vx}\rangle|-|\langle \va_j,\vx_0\rangle|\right)^2\\
&=\sum_{j\in I}\frac{\langle\va_j\va_j^*,\widehat{\vx}\widehat{\vx}^*-\vx_0\vx_0^*\rangle^2}{(|\langle \va_j,\widehat{\vx}\rangle|+|\langle \va_j,\vx_0\rangle|)^2}\overset{(a)}\geq \sum_{j\in I}\frac{\langle\va_j\va_j^*,\widehat{\vx}\widehat{\vx}^*-\vx_0\vx_0^*\rangle^2}{2C_{\beta_0}(\|\vx_0\|_2+\|\widehat{\vx}\|_2)^2}\\
&=\frac{\|\mathcal{A}_{I}(\widehat{\vx}\widehat{\vx}^*-\vx_0\vx_0^*)\|_2^2}{2C_{\beta_0}(\|\vx_0\|_2+\|\widehat{\vx}\|_2)^2}\geq \frac{\|\mathcal{A}_{I}(\widehat{\vx}\widehat{\vx}^*-\vx_0\vx_0^*)\|_1^2}{2C_{\beta_0}(1-\beta_0)m(\|\vx_0\|_2+\|\widehat{\vx}\|_2)^2}\\
&\overset{(b)}\geq \frac{C_{-}^2m\|\widehat{\vx}\widehat{\vx}^*-\vx_0\vx_0^*\|_F^2}{2C_{\beta_0}(1-\beta_0)(\|\vx_0\|_2+\|\widehat{\vx}\|_2)^2}\overset{(c)}\geq \frac{C_{-}^2m}{16C_{\beta_0}(1-\beta_0)}\min_{\theta\in[0,2\pi)}\|\widehat{\vx}-\exp(\mathrm{i}\theta)\vx_0\|_2^2.
\end{split}
\end{equation}
Here the inequalities $(a), (b)$  and $(c)$ are based on (\ref{eq:vxup}), (\ref{eq:sqrt2}) and Theorem \ref{thm: SRIP}, respectively.

Next, let's move on to the right side of  (\ref{eqn: thm3_final}). Based on Lemma \ref{lem: upper_new}, we have
\begin{equation}\label{eqn: thm3_temp2}
\begin{split}
\sum_{j=1}^m2\eta_j\left(|\langle \va_j,\widehat{\vx}\rangle|-|\langle \va_j,\vx_0\rangle|\right)&\leq \left|\sum_{j=1}^m2\eta_j|\langle \va_j,\widehat{\vx}\rangle|\right|+\left|\sum_{j=1}^m2\eta_j|\langle \va_j,\vx_0\rangle|\right|\\
& \leq 2C\left( \sqrt{d\log m}\cdot C_{\psi}\cdot\|\boldsymbol{\eta}\|_2+C_{\psi}\left|\sum_{j=1}^m\eta_j\right| \right)(\|\widehat{\vx}\|_2+\|{\vx}_0\|_2)
\end{split}
\end{equation}
for some absolute constant $C>0$.
Therefore, plugging (\ref{eqn: thm3_temp1}) and (\ref{eqn: thm3_temp2}) into (\ref{eqn: thm3_final}), we have
\begin{equation}\label{final_new1}
\frac{C_{-}^2m}{16C_{\beta_0}(1-\beta_0)}\min_{\theta\in[0,2\pi)}\|\widehat{\vx}-\exp(\mathrm{i}\theta)\vx_0\|_2^2\leq 2C\left( \sqrt{d\log m}\cdot C_{\psi}\cdot\|\boldsymbol{\eta}\|_2+C_{\psi}\left|\sum_{j=1}^m\eta_j\right| \right)(\|\widehat{\vx}\|_2+\|{\vx}_0\|_2).
\end{equation}
By performing a straightforward calculation, we can demonstrate that for any $a,b,l\geq 0$ satisfying $(a-b)^2\leq l(a+b)$, the following inequality holds:
\begin{equation}\label{eq:abc}
a\leq \frac{2b+l+\sqrt{(2b+l)^2-4b^2+4lb}}{2}=\frac{2b+l+\sqrt{8lb+l^2}}{2}\leq 3b+l.
\end{equation}
Set $a_0:=\|\widehat{\vx}\|_2$, $b_0:=\|{\vx}_0\|_2$ and
\begin{equation}\label{c}
l_0:=\frac{16C_{\beta_0}(1-\beta_0)}{C_{-}^2m}\cdot 2C\cdot \left(\sqrt{d\log m}\cdot C_{\psi}\cdot\|\boldsymbol{\eta}\|_2+C_{\psi}\left|\sum_{j=1}^m\eta_j\right|\right).
\end{equation}
By noting that (\ref{final_new1}) implies $(a_0-b_0)^2\leq l_0(a_0+b_0)$, we can utilize  (\ref{eq:abc}) to obtain the following result
\begin{equation}\label{eq:vxvx0}
\|\widehat{\vx}\|_2\,\,\leq\,\, 3\|\vx_0\|_2+l_0.
\end{equation}
Substituting (\ref{eq:vxvx0}) into the right hand side of (\ref{final_new1}), we obtain that
\begin{equation}\label{eqn: hat_x with l}
\min_{\theta\in[0,2\pi)}\|\widehat{\vx}-\exp(\mathrm{i}\theta)\vx_0\|_2^2\leq (4\|\vx_0\|_2+l_0)l_0,
\end{equation}
where $l_0$ is defined as in (\ref{c}).
Recall that
\begin{equation}\label{eqn: upper_eta_m}
 \left|\sum_{j=1}^m\eta_j\right|\lesssim  \sqrt{m\log m}\qquad\text{and}\qquad \sum_{j=1}^m \eta_j^2\lesssim m.
\end{equation}

Substituting (\ref{eqn: upper_eta_m}) into (\ref{eqn: hat_x with l}) and (\ref{c}), we have
\[
\begin{split}
\min_{\theta\in[0,2\pi)}\|\widehat{\vx}-\exp(\mathrm{i}\theta)\vx_0\|_2^2&\leq C'_0 (\|\vx_0\|_2+\sqrt{d\log m/m}+\sqrt{\log m/m})\cdot (\sqrt{d\log m/m}+\sqrt{\log m/m})\\
&\leq C_0 \max\left\{\|\vx_0\|_2,\sqrt{\frac{d\log m}{m}}\right\}\sqrt{\frac{d\log m}{m}},
\end{split}
\]
where $C'_0$ and $C_0$ are some positive constant only depending on $C_{\psi}$ and $\gamma=\mathbb{E}|\xi|^4$.
\end{proof}

\section{Proof of Theorem \ref{thm: sparse_result}}

Firstly, we extend Lemma \ref{lem: non_sparse} to sparse signals, which proves to be highly beneficial for our proof.

 \begin{lemma}\label{lem: sparse}
Assume  that $\vx_0\in \mathbb{C}^d$ is $k$-sparse.
Assume that $\va_j\in \mathbb{C}^d$ and $
b_j=\abs{\innerp{\va_j,\vx_0}}+\eta_j, \  j=1,\ldots,m,$ with $\|\boldsymbol{\eta}\|_2\leq \epsilon$.
We assume that $\widehat{\vx}\in \mathbb{C}^d$  satisfies $\sqrt{\sum_{j=1}^m(\abs{\innerp{\va_j,\widehat{\vx}}}-b_j)^2}\leq \epsilon$ and $\|\widehat{\vx}\|_1\leq \|\vx_0\|_1$.
 Under the assumption of Theorem \ref{th:upbound} about $\va_j\in \mathbb{C}^d, j=1,\ldots,m$,
if $m\gtrsim k\log(ed/k)$, then the following holds with probability at least $1-2 \exp(-c_0m/4)$:
\begin{equation}
\|\mathcal{A}_I({ \widehat{\vx}\widehat{\vx}^{*}-\vx_0\vx_0^*})\|_1\lesssim\sqrt{m}\epsilon(\|\widehat{\vx}\|_2+\|\vx_0\|_2).
\end{equation}
Here, $c_0$ is the positive constant defined in Theorem \ref{thm: RIP}.
The index set $I:=I_{\widehat{\vx},\vx_0}\subset\{1,\ldots,m\}$ is chosen by the following way. Set $\xi_j:=\abs{\innerp{\va_j,\widehat{\vx}}}+\abs{\innerp{\va_j,\vx_0}}$ and assume that $\xi_{j_1}\leq \xi_{j_2}\leq \cdots\leq \xi_{j_m}$. Take $I=\{j_t\ |\ t\leq (1-\beta_0)m\}$, where $\beta_0$ is an absolute constant defined in Theorem \ref{thm: SRIP}.\end{lemma}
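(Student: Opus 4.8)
The plan is to follow the proof of Lemma \ref{lem: non_sparse} essentially line by line, with two changes. First, optimality of $\widehat{\vx}$ is replaced by feasibility: from $\big(\sum_j(|\langle\va_j,\widehat{\vx}\rangle|-b_j)^2\big)^{1/2}\le\epsilon$ and Cauchy--Schwarz, (\ref{eqn: error_lower_0}) becomes $\sum_{j=1}^m\big||\langle\va_j,\widehat{\vx}\rangle|-b_j\big|\le\sqrt m\,\epsilon$, and one uses $\|\ve\|_2\le\epsilon$ wherever the noise appears explicitly, so that $\|\ve\|_2$ is replaced by $\epsilon$ throughout. Second, the only step of the argument that invokes Theorem \ref{thm: RIP} at sparsity level $k=d$ --- namely inequality (a) in the derivation of (\ref{eq:vxup}), which would otherwise force $m\gtrsim d$ --- must be redone, since here we assume only $m\gtrsim k\log(ed/k)$. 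With the first change in place, the identity (\ref{eqn: error_lower_1}), the restriction to the index set $I$, the inequality (\ref{eqn: error_lower_final}), and the case split (\ref{eqn: case1})/(\ref{eqn: case2}) all carry over verbatim (with $\|\ve\|_2$ replaced by $\epsilon$), and the trivial case $\widehat{\vx}=\vx_0=\boldsymbol{0}$ is immediate.

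The substantive point is thus to re-establish the sparse analogue of (\ref{eq:vxup}), namely $|\langle\va_j,\vx_0\rangle|+|\langle\va_j,\widehat{\vx}\rangle|\le\sqrt{2C_{\beta_0}}(\|\vx_0\|_2+\|\widehat{\vx}\|_2)$ for $j\in I$. As in (\ref{eqn: error_lower_2}), for $j\in I$ one has $(|\langle\va_j,\vx_0\rangle|+|\langle\va_j,\widehat{\vx}\rangle|)^2\le\frac{2}{\beta_0 m}\big(\|\vA\vx_0\|_2^2+\|\vA\widehat{\vx}\|_2^2\big)$, where $\vA$ is the $m\times d$ matrix with rows $\va_j^*$. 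Since $\vx_0\vx_0^*$ has rank one and at most $k$ nonzero columns, Theorem \ref{thm: RIP} gives $\|\vA\vx_0\|_2^2=\|\mathcal{A}(\vx_0\vx_0^*)\|_1\le C_+m\|\vx_0\|_2^2$. The new ingredient is the bound on $\|\vA\widehat{\vx}\|_2$: as $\widehat{\vx}$ need not be sparse, I would exploit $\|\widehat{\vx}\|_1\le\|\vx_0\|_1$. Setting $\vh:=\widehat{\vx}-\vx_0$ and $S:=\supp(\vx_0)$ (so $\#S\le k$), the inequality $\|\widehat{\vx}\|_1\le\|\vx_0\|_1$ yields the cone condition $\|\vh_{S^c}\|_1\le\|\vh_S\|_1$; a standard shelling argument for vectors in the cone $\{\vv:\|\vv_{S^c}\|_1\le\|\vv_S\|_1\}$, together with Theorem \ref{thm: RIP} applied at sparsity $2k$ to each $2k$-sparse piece, then gives $\|\vA\vh\|_2\lesssim\sqrt m\,\|\vh\|_2\le\sqrt m\,(\|\widehat{\vx}\|_2+\|\vx_0\|_2)$. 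Combining with $\|\vA\vx_0\|_2\le\sqrt{C_+m}\,\|\vx_0\|_2$ gives $\|\vA\widehat{\vx}\|_2\le\|\vA\vx_0\|_2+\|\vA\vh\|_2\lesssim\sqrt m\,(\|\vx_0\|_2+\|\widehat{\vx}\|_2)$, and hence the analogue of (\ref{eq:vxup}).

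With (\ref{eq:vxup}) available, Case 1 reproduces (\ref{eqn: case1_temp}) and yields $\|\mathcal{A}_I(\widehat{\vx}\widehat{\vx}^*-\vx_0\vx_0^*)\|_1\lesssim\sqrt m\,\epsilon(\|\widehat{\vx}\|_2+\|\vx_0\|_2)$; Case 2 yields the same bound from $\|\mathcal{A}_I(\widehat{\vx}\widehat{\vx}^*-\vx_0\vx_0^*)\|_1\le 4\|\ve\|_2\big(\sum_{j\in I}(|\langle\va_j,\widehat{\vx}\rangle|+|\langle\va_j,\vx_0\rangle|)^2\big)^{1/2}$ together with (\ref{eq:vxup}) and $\|\ve\|_2\le\epsilon$. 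For the probability, all the restricted isometry estimates used above --- including those at sparsity $2k$, which require only $m\gtrsim k\log(ed/k)$ after enlarging the implied absolute constant --- hold on the event of Theorem \ref{thm: SRIP}, whose upper bound constant coincides with the $C_+$ of Theorem \ref{thm: RIP}; hence the conclusion holds with probability at least $1-2\exp(-c_0m/4)$. I expect the restricted-norm estimate $\|\vA\widehat{\vx}\|_2\lesssim\sqrt m\,(\|\vx_0\|_2+\|\widehat{\vx}\|_2)$, obtained from the $\ell_1$-cone constraint by shelling, to be the only genuinely new obstacle; everything else is a transcription of the proof of Lemma \ref{lem: non_sparse}.
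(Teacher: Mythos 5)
Your proposal is correct, but the way you handle the one genuinely new step --- the sparse analogue of (\ref{eq:vxup}) --- is different from the paper's. The paper stays at the matrix level: it sets $\vZ:=\vx_0\vx_0^*+\widehat{\vx}\widehat{\vx}^*$, partitions the index set into $S_0=\supp(\vx_0)$ and shells $S_1,S_2,\dots$ of $\widehat{\vx}_{S_0^c}$, and bounds $\frac{1}{\beta_0 m}\|\mathcal{A}_{\{1,\dots,m\}\setminus I}(\vZ)\|_1$ block by block, applying the matrix RIP of Theorems \ref{thm: RIP}--\ref{thm: SRIP} to $\widetilde{\vZ}=\vZ_{S_{01},S_{01}}$, to the cross blocks $\vZ_{S_l,S_{01}^c}+\vZ_{S_{01}^c,S_l}$, and to the tail block $\vZ_{S_{01}^c,S_{01}^c}$, with the cone condition $\|\widehat{\vx}_{S_0^c}\|_1\le\sqrt{k}\|\widehat{\vx}_{S_{01}}-\vx_0\|_2$ (from $\|\widehat{\vx}\|_1\le\|\vx_0\|_1$) controlling the off-support contributions; this yields the explicit constant $C_{\beta_0}=(2+4\sqrt{2})C_+/\beta_0$. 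You instead work at the vector level: you note that the rank-one specialization of Theorem \ref{thm: RIP} gives $\|\vA\vv\|_2^2=\|\mathcal{A}(\vv\vv^*)\|_1\le C_+m\|\vv\|_2^2$ for $2k$-sparse $\vv$, derive the same cone condition for $\vh=\widehat{\vx}-\vx_0$, and run the classical compressed-sensing shelling on $\vh$ to get $\|\vA\widehat{\vx}\|_2\lesssim\sqrt{m}(\|\vx_0\|_2+\|\widehat{\vx}\|_2)$. The two arguments are cousins (the paper's block decomposition is essentially the lifted version of your shelling), but yours is more elementary and shorter, since it avoids tracking the various matrix blocks; the paper's has the advantage of producing the quantity $\|\mathcal{A}(\vZ_{S_l,S_j}+\vZ_{S_j,S_l})\|_1$ in exactly the form reused later in the proof of Theorem \ref{thm: sparse_result}. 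Everything else in your write-up (replacing optimality by feasibility, substituting $\epsilon$ for $\|\ve\|_2$, and transcribing the two-case argument of Lemma \ref{lem: non_sparse}) matches the paper. The only bookkeeping point worth noting is that you invoke the RIP at sparsity levels $k$ and $2k$ simultaneously, so strictly speaking the failure probabilities should be combined by a union bound (and $m\gtrsim k\log(ed/k)$ adjusted by an absolute constant); the paper glosses over the same issue when it applies Theorem \ref{thm: SRIP} at orders $(2,(a+1)k)$ and $(2,2ak)$, so this is consistent with its conventions and not a gap.
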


 \begin{proof}
 Using a similar argument for   (\ref{eqn: commonly_used1}), we obtain that
\begin{equation}\label{eqn: upper_sparse_temp1}
\sum_{j\in I}\frac{\big|\left|\innerp{\va_j\va_j^*, \widehat{\vx}\widehat{\vx}^{*}-\vx_0\vx_0^*}\right|-|\eta_j|(\abs{\innerp{\va_j,\widehat{\vx}}}+\abs{\innerp{\va_j,\vx_0}})\big|}{\abs{\innerp{\va_j,\widehat{\vx}}}+\abs{\innerp{\va_j,\vx_0}}}\leq \sum_{j=1}^m \left|\abs{\innerp{\va_j,\widehat{\vx}}}-b_j\right|\leq \sqrt{m}\epsilon.
\end{equation}
We claim  that
\begin{equation}\label{eqn: upper_sparse_temp2}
\left(\abs{\innerp{\va_j,\vx_0}}+\abs{\innerp{\va_j,\widehat{\vx}}}\right)^2\leq  2C_{\beta_0}(\|\vx_0\|_2+\|\widehat{\vx}\|_2)^2,  \text{ for } j\in I,
\end{equation}
where $C_{\beta_0}:=\left(2+{4\sqrt{2}}\right)\frac{C_{+}}{\beta_0}$.
Combining (\ref{eqn: upper_sparse_temp1}) and (\ref{eqn: upper_sparse_temp2}), we can obtain that
\begin{equation}\label{eq:xishuproof}
\begin{aligned}
\frac{\left|\sum_{j\in I}\left|\innerp{\va_j\va_j^*, \widehat{\vx}\widehat{\vx}^{*}-\vx_0\vx_0^*}\right|-\sum_{j\in I}|\eta_j|(\abs{\innerp{\va_j,\widehat{\vx}}}+\abs{\innerp{\va_j,\vx_0}})\right|}{\sqrt{2C_{\beta_0}}(\|\vx_0\|_2+\|\widehat{\vx}\|_2)}\leq \sqrt{m}\epsilon.
\end{aligned}
\end{equation}
We can employ (\ref{eq:xishuproof}) to obtain that
\[
  \|\mathcal{A}_I({ \widehat{\vx}\widehat{\vx}^{*}-\vx_0\vx_0^*})\|_1\lesssim\sqrt{m}\epsilon(\|\widehat{\vx}\|_2+\|\vx_0\|_2).
\]
Here, we use the same argument as in the proof of Lemma \ref{lem: non_sparse} and we omit the details.
It remains to prove (\ref{eqn: upper_sparse_temp2}).
Set \[
\vZ:=\vX_0+\widehat{\vX}=\vx_0\vx_0^*+\widehat{\vx}\widehat{\vx}^*,\quad \text{with}\ \quad \vX_0:=\vx_0\vx_0^*\quad\ \text{and}\quad\  \widehat{\vX}:=\widehat{\vx}\widehat{\vx}^*.
\]
 Set
$S_{0}:=\text{supp}(\vx_{0})\subset \{1,\ldots,d\}$.
 Set $S_{1}$ as the index set which contains the indices of the $k$ largest
 elements of  $\widehat{\vx}_{S_{0}^{c}}$ in magnitude, and $S_2$ contains the indices of the next $k$ largest elements, and so on. For simplicity, we set $S_{01}:=S_{0}\cup S_{1}$
and  $\widetilde{\vZ}:=Z_{S_{01},S_{01}}$. Then, for any $j\in I$, we have
\begin{equation}\label{eqn: upper_sparse_final1}
\begin{aligned}
&\left(\abs{\innerp{\va_j,\vx_0}}+\abs{\innerp{\va_j,\widehat{\vx}}}\right)^2\leq {\frac{1}{\beta_0m}\sum_{t\in \{1,\ldots,m\}\setminus I } (\abs{\innerp{\va_t,\vx_0}}+\abs{\innerp{\va_t,\widehat{\vx}}})^2}\\
&\leq \frac{2}{\beta_0m}\sum_{t\in \{1,\ldots,m\}\setminus I } (\abs{\innerp{\va_t,\vx_0}}^2+\abs{\innerp{\va_t,\widehat{\vx}}}^2)=\frac{2}{\beta_0m}\|\mathcal{A}_{\{1,\ldots,m\}\setminus I}(\vZ)\|_1\\
&\leq \frac{2}{\beta_0m}\|\mathcal{A}_{\{1,\ldots,m\}\setminus I}(\widetilde{\vZ})\|_1+\frac{2}{\beta_0m}\|\mathcal{A}_{\{1,\ldots,m\}\setminus I}(\vZ-\widetilde{\vZ})\|_1
\end{aligned}
\end{equation}
 According to Theorem \ref{thm: RIP}, we have
 \begin{equation}\label{eqn: upper_sparse_temp11}
 \frac{1}{\beta_0m}\|\mathcal{A}_{\{1,\ldots,m\}\setminus I}(\widetilde{\vZ})\|_1\leq \frac{1}{\beta_0m}\|\mathcal{A}(\widetilde{\vZ})\|_1\leq \frac{C_{+}}{\beta_0}\|\widetilde{\vZ}\|_F\leq \frac{C_{+}}{\beta_0}(\|\vx_0\|_2^2+\|\widehat{\vx}\|_2^2).
 \end{equation}
 We also have
 \begin{equation}\label{eqn: mid_final0}
 \begin{aligned}
\frac{1}{\beta_0 m}\|\mathcal{A}_{\{1,\ldots,m\}\setminus I}(Z-\widetilde{\vZ})\|_1
 \leq  & \frac{1}{\beta_0m}\|\A(\vZ_{S_{0},S_{01}^c}+\vZ_{S_{01}^c,S_{0}})\|_1+\frac{1}{\beta_0m}\|\A(\vZ_{S_{1},S_{01}^c}+\vZ_{S_{01}^c,S_{1}})\|_1\\
 &+\frac{1}{\beta_0m}\|\A(\vZ_{S_{01}^c,S_{01}^c})\|_1.
 \end{aligned}
  \end{equation}
 We claim that
  \begin{equation}\label{eqn: mid_final1}
\frac{1}{\beta_0 m}\|\A(\vZ_{S_{l},S_{01}^c}+\vZ_{S_{01}^c,S_{l}})\|_1\leq \frac{2\sqrt{2}C_{+}}{\beta_0}(\|\vx_0\|_2^2+\|\widehat{\vx}\|_2^2),\quad \text{ for } l \in \{0,1\},
  \end{equation}
  and
    \begin{equation}\label{eqn: mid_final2}
 \frac{1}{\beta_0m}\|\A(\vZ_{S_{01}^c,S_{01}^c})\|_1\leq \frac{C_{+}}{\beta_0}(\|\vx_0\|_2^2+\|\widehat{\vx}\|_2^2).
  \end{equation}
  Combining (\ref{eqn: upper_sparse_final1}), (\ref{eqn: upper_sparse_temp11}), (\ref{eqn: mid_final0}), (\ref{eqn: mid_final1}) and (\ref{eqn: mid_final2}),  we obtain that
  \begin{equation}\label{eqn: upper_sparse_final2}
 \left(\abs{\innerp{\va_j,\vx_0}}+\abs{\innerp{\va_j,\widehat{\vx}}}\right)^2\leq 2\left(2+{4\sqrt{2}}\right)\frac{C_{+}}{\beta_0}(\|\vx_0\|_2^2+\|\widehat{\vx}\|_2^2).
  \end{equation}
The only remaining task is to validate (\ref{eqn: mid_final1}) and (\ref{eqn: mid_final2}). Let's first focus on (\ref{eqn: mid_final1}).
  According to  Theorem \ref{thm: SRIP}, for $l\in \{0,1\}$, we have
\[
\begin{aligned}
&\frac{1}{\beta_0 m}\|\A(\vZ_{S_{l},S_{01}^c}+\vZ_{S_{01}^c,S_{l}})\|_1\\
&\leq \sum_{j\geq 2}\frac{1}{\beta_0m}\|\A(\vZ_{S_{l},S_j}+\vZ_{S_{j},S_{l}})\|_1\leq \sum_{j\geq 2}\frac{C_{+}}{\beta_0}\|\vZ_{S_{l},S_j}+\vZ_{S_{j},S_{l}}\|_F\\
& \leq \frac{C_{+}}{\beta_0} \sum_{j\geq 2}(\|\widehat{\vx}_{S_l}\widehat{\vx}_{S_j}^*\|_F+\|\widehat{\vx}_{S_j}\widehat{\vx}_{S_l}^*\|_F)=\frac{2C_{+}}{\beta_0}\sum_{j\geq 2}\|\widehat{\vx}_{S_l}\|_2\|\widehat{\vx}_{S_j}\|_2\\
&\overset{(a)}\leq \frac{2C_{+}}{\beta_0} \|\widehat{\vx}_{S_{l}}\|_{2}\|\widehat{\vx}_{S_{01}}-\vx_0\|_{2}\leq  \frac{2C_{+}}{\beta_0}\|\widehat{\vx}_{S_{01}}\|_{2}\|\widehat{\vx}_{S_{01}}-\vx_0\|_{2}\leq  \frac{2\sqrt{2}C_{+}}{\beta_0}\|\widehat{\vx}_{S_{01}}\widehat{\vx}_{S_{01}}^*-\vx_0\vx_0^*\|_F\\
&\leq  \frac{2\sqrt{2}C_{+}}{\beta_0}(\|\vx_0\|_2^2+\|\widehat{\vx}\|_2^2),
\end{aligned}
\]
where the inequality $(a)$  is obtained by
\begin{equation}\label{eq:term31}
\begin{aligned}
\|\widehat{\vx}_{S_{l}}\|_{2} \cdot\sum_{j\geq2}\|\widehat{\vx}_{S_{j}}\|_{2}
\leq\frac{1}{\sqrt{k}}\|\widehat{\vx}_{S_{0}^{c}}\|_{1}\|\widehat{\vx}_{S_{l}}\|_{2}
\leq \|\widehat{\vx}_{S_{l}}\|_{2}\|\widehat{\vx}_{S_{01}}-\vx_0\|_{2}.
\end{aligned}
\end{equation}
 {The first inequality in (\ref{eq:term31}) follows from $\|\widehat{\vx}_{S_{j}}\|_2\leq \|\widehat{\vx}_{S_{j-1}}\|_1/\sqrt{k}$, for $j\geq 2$}, and the second inequality in (\ref{eq:term31}) is based on $\|\widehat{\vx}\|_1\leq \|\vx_0\|_1$, which leads to
\[
\|\widehat{\vx}_{S_{0}^c}\|_{1}\leq \|\vx_0\|_1-\|\widehat{\vx}_{S_{0}}\|_1\leq \|\widehat{\vx}_{S_{0}}-\vx_0\|_1\leq \sqrt{k}\|\widehat{\vx}_{S_{0}}-\vx_0\|_2\leq \sqrt{k}\|\widehat{\vx}_{S_{01}}-\vx_0\|_2.
\]
We next turn to (\ref{eqn: mid_final2}).
We have
\[
\begin{aligned}
\frac{1}{\beta_0m}\|\A(\vZ_{S_{01}^c,S_{01}^c})\|_1
 \leq &\frac{1}{\beta_0m}\sum_{l\geq j\geq2}\|\A(\vZ_{S_{l},S_{j}})+\A(\vZ_{S_{j},S_{l}})\|_1\\
 \leq& \frac{C_{+}}{\beta_0} \sum_{l\geq2,j\geq2}\|\widehat{\vx}_{S_{l}}\|_{2}\cdot \|\widehat{\vx}_{S_{j}}\|_{2}
=\frac{C_{+}}{\beta_0}  \left(\sum_{l\geq2}\|\widehat{\vx}_{S_{l}}\|_{2}\right)^{2}\\
\leq&\frac{C_{+}}{\beta_0} \cdot\frac{1}{k}\|\widehat{\vx}_{S_{0}^{c}}\|_{1}^{2}
=\frac{C_{+}}{\beta_0} \cdot\frac{1}{k}\|\vZ_{S_{0}^{c},S_{0}^{c}}\|_{1} \\
 \overset{(b)}\leq&  \frac{C_{+}}{\beta_0} \cdot\frac{1}{k}\|\vZ_{S_{0},S_{0}}\|_{1}\leq\frac{C_{+}}{\beta_0}\|\vZ_{S_{0},S_{0}}\|_{F}\leq\frac{C_{+}}{\beta_0} \|\widetilde{\vZ}\|_{F}\\
\leq&  \frac{C_{+}}{\beta_0}(\|\vx_0\|_2^2+\|\widehat{\vx}\|_2^2).
 \end{aligned}
\]
We arrive at  (\ref{eqn: mid_final2}).
The inequality $(b)$ is based on $\|\vZ_{S_{0}^{c},S_{0}^{c}}\|_{1}\leq \|\vZ-\vZ_{S_{0},S_{0}}\|_{1}\leq \|\vZ_{S_{0},S_{0}}\|_{1}$.
Indeed, according to $\|\widehat{\vX}\|_1\leq \|\vX_0\|_1$, we have
\[
\|\vZ-\vZ_{S_{0},S_{0}}\|_{1}=\|\widehat{\vX}-\widehat{\vX}_{S_0,S_0}\|_1\leq \|\vX_0\|_1-\|\widehat{\vX}_{S_0,S_0}\|_1\leq \|\vX_0+\widehat{\vX}_{S_0,S_0}\|_1=\|\vZ_{S_0,S_0}\|_1.
\]
\end{proof}

Next, we utilize Lemma \ref{lem: sparse} to prove Theorem \ref{thm: sparse_result}. The proof follows a similar scheme as in \cite[Theorem 1.3]{XiaXu}, and we provide a detailed presentation here to ensure the completeness and convenience for the readers.

  \begin{proof}[Proof of Theorem \ref{thm: sparse_result}]
  Noting that $ \|\widehat{\vx}\|_1\leq \|\vx_0\|_1 $,
 we obtain that
 $\|\widehat{\vX}\|_1\leq \|\vX_0\|_1$ where $\vX_0:=\vx_0\vx_0^*$ and $\widehat{\vX}:=\widehat{\vx}\widehat{\vx}^*$. Set $\vH:=\widehat{\vX}-\vX_0=\widehat{\vx}\widehat{\vx}^{*}-\vx_{0}\vx_{0}^{*}$ and
$T_{0}:=\text{supp}(\vx_{0})$.
 Set $T_{1}$ as the index set which contains the indices of the $a\cdot k$ largest
 elements of  $\widehat{\vx}_{T_{0}^{c}}$ in magnitude, and $T_2$ contains the indices of the next $a\cdot k$ largest elements, and so on. Take $T_{01}:=T_{0}\cup T_{1}$
and  $\widetilde{\vH}:=\vH_{T_{01},T_{01}}$ for simplicity.  {Given that $\exp(\mathrm{i}\theta)\widehat{\vx}$ is also a solution for any $\theta\in \mathbb{R}$, we can, for the purpose of applying Lemma \ref{Lemma 3.2}, assume that
\[
 \langle \widehat{\vx},\vx_0\rangle= \langle \widehat{\vx}_{T_{01}},\vx_0\rangle\geq 0.
 \]}
Here, we require that the absolute constant $a>0$ satisfies
\begin{equation}\label{eqn: a_condition}
C_{-}(1-\beta_0)-\frac{4C_{+}}{\sqrt{a}}-\frac{C_{+}}{a}>0,
\end{equation}
where $C_{-},$ $C_{+},$ and $\beta_0$ are  defined in Theorem \ref{thm: SRIP}. Set $T_{01}:=T_{0}\cup T_{1}$
and  $\widetilde{\vH}:=\vH_{T_{01},T_{01}}$.
We claim that
\begin{equation}\label{eq:mainH}
\begin{aligned}
 \|\widehat{\vx}\widehat{\vx}^* -\vx_0\vx_0^*\|_F&=\|\vH\|_F\leq \|\widetilde{\vH}\|_F+\|\vH-\widetilde{\vH}\|_F\leq \left(\frac{1}{a}+\frac{4}{\sqrt{a}}+1\right)\|\widetilde{\vH}\|_F\\
 &\leq \frac{\frac{1}{a}+\frac{4}{\sqrt{a}}+1}{C_{-}(1-\beta_0)-\frac{4C_{+}}{\sqrt{a}}-\frac{C_{+}}{a}}\frac{2 \epsilon(\|\widehat{\vx}\|_2+\|\vx_0\|_2)}{\sqrt{m}}.
 \end{aligned}
\end{equation}
 {Furthermore, based on  (\ref{eq:sqrt2}), we can derive the following inequality:}
\begin{equation}\label{eq: vector_matrix}
\frac{\sqrt{2}}{4}(\|\widehat{\vx}\|_2+\|\vx_0\|_2)\inf_{\theta\in[0,2\pi)}\|\widehat{\vx}-\exp(\mathrm{i}\theta)\vx_0\|_2
\leq \|{ \widehat{\vx}\widehat{\vx}^{*}-\vx_0\vx_0^*}\|_F.
\end{equation}
 Combining (\ref{eq:mainH}) and (\ref{eq: vector_matrix}), we obtain that
 \[
  \min_{\theta\in[0,2\pi)}\|\widehat{\vx}-\exp(\mathrm{i}\theta)\vx_0\|_2\lesssim \frac{1}{\sqrt{m}}\epsilon,
 \]
which leads to the conclusion.

We next turn to prove (\ref{eq:mainH}). The second inequality and third inequality  in (\ref{eq:mainH})
follow
 from
\begin{equation}\label{eq:main1}
\|\vH-\widetilde{\vH}\|_F\,\,\leq \,\, \left(\frac{1}{a}+\frac{4}{\sqrt{a}}\right) \|\widetilde{\vH}\|_F,
\end{equation}
and
\begin{equation}\label{eq:main2}
\|\widetilde{\vH}\|_F\leq  \frac{1}{C_{-}(1-\beta_0)-\frac{4C_{+}}{\sqrt{a}}-\frac{C_{+}}{a}}\frac{2\epsilon(\|\widehat{\vx}\|_2+\|\vx_0\|_2)}{\sqrt{m}},
\end{equation}
respectively.
To this end, it is enough to prove (\ref{eq:main1}) and (\ref{eq:main2}).


\textbf{Step 1: The proof of (\ref{eq:main1}).}

A simple observation is that
\begin{equation}\label{eq:right1}
\begin{aligned}
\|\vH-\widetilde{\vH}\|_F&\leq \sum_{l\geq 2,j\geq 2}\|\vH_{T_{l},T_j}\|_F+\sum_{l=0,1}\sum_{j\geq 2}\|\vH_{T_{l},T_j}\|_F+\sum_{j=0,1}\sum_{l\geq 2}\|\vH_{T_{l},T_j}\|_F\\
&= \sum_{l\geq 2,j\geq 2}\|\vH_{T_{l},T_j}\|_F+2\sum_{l=0,1}\sum_{j\geq 2}\|\vH_{T_{l},T_j}\|_F.
\end{aligned}
\end{equation}
On one hand, we have
\begin{equation}\label{eq:term1}
\begin{aligned}
\sum_{l\geq2,j\geq2}\|\vH_{T_{l},T_{j}}\|_{F} & = \sum_{l\geq2,j\geq2}\|\widehat{\vx}_{T_{l}}\|_{2}\cdot \|\widehat{\vx}_{T_{j}}\|_{2}
=\left(\sum_{l\geq2}\|\widehat{\vx}_{T_{l}}\|_{2}\right)^{2}\leq\frac{1}{ak}\|\widehat{\vx}_{T_{0}^{c}}\|_{1}^{2}\\
&=\frac{1}{ak}\|\vH_{T_{0}^{c},T_{0}^{c}}\|_{1}
  \leq  \frac{1}{ak}\|\vH_{T_{0},T_{0}}\|_{1}\leq\frac{1}{a}\|\vH_{T_{0},T_{0}}\|_{F}\leq\frac{1}{a}\|\widetilde{\vH}\|_{F},
 \end{aligned}
\end{equation}
according to $\|\widehat{\vx}_{T_{l}}\|_2\leq \|\widehat{\vx}_{T_{l-1}}\|_1/\sqrt{ak}$ ($l\geq 2$), and
\begin{equation}\label{eqn: H_temp}
\|\vH_{T_{0}^c,T_{0}^c}\|_{1}\leq \|\vH-\vH_{T_{0},T_{0}}\|_{1}=\|\widehat{\vX}-\widehat{\vX}_{T_0,T_0}\|_1\leq \|\vX_0\|_1-\|\widehat{\vX}_{T_0,T_0}\|_1\leq \|\vX_0-\widehat{\vX}_{T_0,T_0}\|_1=\|\vH_{T_0,T_0}\|_1.
\end{equation}
On the other hand,  for $l\in
\{0,1\}$, we have
\begin{equation}\label{eq:term3}
\begin{aligned}
\sum_{j\geq2}\|\vH_{T_{l},T_{j}}\|_{F}=\|\widehat{\vx}_{T_{l}}\|_{2} \cdot\sum_{j\geq2}\|\widehat{\vx}_{T_{j}}\|_{2}
\leq\frac{1}{\sqrt{ak}}\|\widehat{\vx}_{T_{0}^{c}}\|_{1}\cdot \|\widehat{\vx}_{T_{l}}\|_{2}
\leq \frac{1}{\sqrt{a}}\|\widehat{\vx}_{T_{l}}\|_{2}\cdot \|\widehat{\vx}_{T_{01}}-\vx_0\|_{2}.
\end{aligned}
\end{equation}
The last inequality in (\ref{eq:term3}) is based on
\[
\|\widehat{\vx}_{T_{0}^c}\|_{1}\leq \|\vx_0\|_1-\|\widehat{\vx}_{T_{0}}\|_1\leq \|\widehat{\vx}_{T_{0}}-\vx_0\|_1\leq \sqrt{k}\|\widehat{\vx}_{T_{0}}-\vx_0\|_2\leq \sqrt{k}\|\widehat{\vx}_{T_{01}}-\vx_0\|_2.
\]
 Substituting (\ref{eq:term1}) and (\ref{eq:term3}) into (\ref{eq:right1}), we can obtain that
\begin{equation}
\label{eqn: final1}
\begin{aligned}
\|\vH-\widetilde{\vH}\|_F &\leq \sum_{l\geq 2,j\geq 2}\|\vH_{T_{l},T_j}\|_F+\sum_{l=0,1}\sum_{j\geq 2}\|\vH_{T_{l},T_j}\|_F+\sum_{j=0,1}\sum_{l\geq 2}\|\vH_{T_{l},T_j}\|_F\\
&\leq\frac{1}{a}\|\widetilde{\vH}\|_{F}+\frac{2\sqrt{2}}{\sqrt{a}}
\|\widehat{\vx}_{T_{01}}\|_{2}\|\widehat{\vx}_{T_{01}}-\vx_{0}\|_{2}\leq \left(\frac{1}{a}+\frac{4}{\sqrt{a}}\right)\|\widetilde{\vH}\|_F.
\end{aligned}
\end{equation}
 {The last inequality above follows from Lemma \ref{Lemma 3.2}, i.e.,
$\|\widehat{\vx}_{T_{01}}\|_{2}\|\widehat{\vx}_{T_{01}}-\vx_{0}\|_{2}\leq \sqrt{2} \|\widehat{\vx}_{T_{01}}\widehat{\vx}_{T_{01}}^*-\vx_{0}\vx_0^* \|_F=\sqrt{2} \|\widetilde{\vH}\|_F$.
} Thus we arrive at (\ref{eq:main1}).

\textbf{Step 2: The proof of (\ref{eq:main2}).}
 According to Lemma \ref{lem: sparse},  we have
 \begin{equation}\label{eqn: sparse_main1}
 \frac{1}{m}\|\mathcal{A}_I({ \widehat{\vx}\widehat{\vx}^{*}-\vx_0\vx_0^*})\|_1\lesssim\frac{1}{\sqrt{m}}\epsilon(\|\widehat{\vx}\|_2+\|\vx_0\|_2),
 \end{equation}
 provided $m\gtrsim k\log(ed/k)$, which implies
 \begin{equation}
 \label{eqn: error estimation1}
 \frac{\epsilon(\|\widehat{\vx}\|_2+\|\vx_0\|_2)}{\sqrt{m}}\gtrsim \frac{1}{m}\|\A_{I}(\vH)\|_1\geq \frac{1}{m}\|\A_{I}(\widetilde{\vH})\|_1-\frac{1}{m}\|\A_{I}(\vH-\widetilde{\vH})\|_1.
 \end{equation}
In order to get a lower bound of $\frac{1}{m}\|\A_{I}(\widetilde{\vH})\|_1-\frac{1}{m}\|\A_{I}(\vH-\widetilde{\vH})\|_1$,  we bound $\frac{1}{m}\|\A_{I}(\widetilde{\vH})\|_1$ from below and  $\frac{1}{m}\|\A_{I}(\vH-\widetilde{\vH})\|_1$ from above. As $\text{rank}(\widetilde{\vH})\leq 2$ and $\|\widetilde{\vH}\|_{0,2}\leq (a+1)k$, we use  Theorem \ref{thm: SRIP} on the order of  $(2, (a+1)k)$ to obtain that
\begin{equation}\label{eqn: lower_final1}
\frac{1}{m}\|\mathcal{A}_I(\widetilde{\vH})\|_1\geq C_{-}(1-\beta_0)\|\widetilde{\vH}\|_F.
\end{equation}
Since $\vH-\widetilde{\vH}=(\vH_{T_{0},T_{01}^c}+\vH_{T_{01}^c,T_{0}})+(\vH_{T_{1},T_{01}^c}+\vH_{T_{01}^c,T_{1}})+\vH_{T_{01}^c,T_{01}^c},$
we have
\begin{equation}\label{mid: final}
\frac{1}{m}\|\A_{I}(\vH-\widetilde{\vH})\|_1\leq \frac{1}{m}\|\A_{I}(\vH_{T_{0},T_{01}^c}+\vH_{T_{01}^c,T_{0}})\|_1+\frac{1}{m}\|\A_{I}(\vH_{T_{1},T_{01}^c}+\vH_{T_{01}^c,T_{1}})\|_1+\frac{1}{m}\|\A_{I}(\vH_{T_{01}^c,T_{01}^c})\|_1.
\end{equation}
 According to Theorem \ref{thm: SRIP} on the order of  $(2, 2ak)$, for $l\in \{0,1\}$, we have
\begin{equation}\label{eqn: mid1}
\begin{split}
\frac{1}{m}\|\A_{I}(\vH_{T_{l},T_{01}^c}+\vH_{T_{01}^c,T_{l}})\|_1&\leq \sum_{j\geq 2}\frac{1}{m}\|\A_{I}(\vH_{T_{l},T_j}+\vH_{T_{j},T_{l}})\|_1\leq \sum_{j\geq 2}C_{+}\|\vH_{T_{l},T_j}+\vH_{T_{j},T_{l}}\|_F\\
& \leq C_{+} \sum_{j\geq 2}(\|\widehat{\vx}_{T_{l}}\widehat{\vx}_{T_{j}}^*\|_F+\|\widehat{\vx}_{T_{j}}\widehat{\vx}_{T_{l}}^*\|_F)=2C_{+}\|\widehat{\vx}_{T_{l}}\|_2\sum_{j\geq 2}\|\widehat{\vx}_{T_{j}}\|_2\\
&\leq \frac{2C_{+}}{\sqrt{a}} \|\widehat{\vx}_{T_{l}}\|_{2}\cdot \|\widehat{\vx}_{T_{01}}-\vx_0\|_{2},
\end{split}
\end{equation}
where the third line above is obtained as in (\ref{eq:term3}).

To bound  $\frac{1}{m}\|\A_{I}(\vH_{T_{01}^c,T_{01}^c})\|_1$, we have
\begin{equation}\label{eqn: mid2}
\begin{aligned}
 \frac{1}{m}\|\A_{I}(\vH_{T_{01}^c,T_{01}^c})\|_1
 \leq &\frac{1}{m}\sum_{l\geq j\geq2}\|\A_{I}(\vH_{T_{l},T_{j}})+\A_{I}(\vH_{T_{j},T_{l}})\|_1\\
 \leq &{C_{+}}\sum_{l\geq2,j\geq2}\|\widehat{\vx}_{T_{l}}\|_{2}\cdot \|\widehat{\vx}_{T_{j}}\|_{2}
={C_{+}}  \left(\sum_{l\geq2}\|\widehat{\vx}_{T_{l}}\|_{2}\right)^{2}\leq\frac{{C_{+}}}{ak}\|\widehat{\vx}_{T_{0}^{c}}\|_{1}^{2}\\
=&\frac{{C_{+}}}{ak}\|\vH_{T_{0}^{c},T_{0}^{c}}\|_{1} \leq  \frac{{C_{+}} }{ak}\|\vH_{T_{0},T_{0}}\|_{1}\leq\frac{{C_{+}}}{a}\|\vH_{T_{0},T_{0}}\|_{F} \leq\frac{{C_{+}}}{a}\|\widetilde{\vH}\|_{F}.
 \end{aligned}
\end{equation}
The third line above is based on (\ref{eqn: H_temp}). Now combining  (\ref{eqn:
mid1}) and (\ref{eqn: mid2}),  we obtain that
\begin{equation}
\label{eqn: lower_final2}
\begin{aligned}
\frac{1}{m}\|\A_{I}(\vH-\widetilde{\vH})\|_1&\leq \frac{1}{m}
\left(\|\A_{I}(\vH_{T_{0},T_{01}^c}+\vH_{T_{01}^c,T_{0}})\|_1+\|\A_{I}(\vH_{T_{1},T_{01}^c}+\vH_{T_{01}^c,T_{1}})\|_1
+\|\A_{I}(\vH_{T_{01}^c,T_{01}^c})\|_1\right)\\
&\leq \frac{2C_{+}}{\sqrt{a}} \|\widehat{\vx}_{T_{0}}\|_{2}\|\widehat{\vx}_{T_{01}}-\vx_0\|_{2}+\frac{2C_{+}}{\sqrt{a}} \|\widehat{\vx}_{T_{1}}\|_{2}\|\widehat{\vx}_{T_{01}}-\vx_0\|_{2}+\frac{C_{+}}{a}\|\widetilde{\vH}\|_F\\
&\leq \frac{2\sqrt{2}C_{+}}{\sqrt{a}} \|\widehat{\vx}_{T_{01}}\|_{2}\|\widehat{\vx}_{T_{01}}-\vx_0\|_{2}+\frac{C_{+}}{a}{\|\widetilde{\vH}\|_F}\\
&\leq C_{+}\left(\frac{4}{\sqrt{a}}+\frac{1}{a}\right)\|\widetilde{\vH}\|_F.
\end{aligned}
\end{equation}
 {The last inequality above follows from Lemma \ref{Lemma 3.2}, i.e.,
$\|\widehat{\vx}_{T_{01}}\|_{2}\|\widehat{\vx}_{T_{01}}-\vx_{0}\|_{2}\leq \sqrt{2} \|\widehat{\vx}_{T_{01}}\widehat{\vx}_{T_{01}}^*-\vx_{0}\vx_0^* \|_F=\sqrt{2} \|\widetilde{\vH}\|_F$.
} Putting  (\ref{eqn: lower_final1}) and  (\ref{eqn: lower_final2}) into (\ref{eqn: error
estimation1}), we obtain that
 \[
 \begin{aligned}
 \frac{{2}\epsilon(\|\widehat{\vx}\|_2+\|\vx_0\|_2)}{\sqrt{m}}&\geq \frac{1}{m}\|\A_{I}(\widetilde{\vH})\|_1-\frac{1}{m}\|\A_{I}(\vH-\widetilde{\vH})\|_1\\
 &\geq C_{-}(1-\beta_0)\|\widetilde{\vH}\|_F-C_{+}\left(\frac{4}{\sqrt{a}}+\frac{1}{a}\right)\|\widetilde{\vH}\|_F\\
 &=\left(C_{-}(1-\beta_0)-\frac{4C_{+}}{\sqrt{a}}-\frac{C_{+}}{a}\right)\|\widetilde{\vH}\|_F,
\end{aligned}
 \]
which implies  (\ref{eq:main2}).
 \end{proof}
 \section{Discussion}
 {{In this paper, we have examined the performance of amplitude-based models   (\ref{eqn: nonsparse_model}) and (\ref{eqn: sparse_model}) for complex phase retrieval problems.
  We have utilized a strong version of  restricted isometry property
  for an operator on the space of simultaneous low-rank and sparse matrices
  to demonstrate that the reconstruction error is on the order of $\mathcal{O}(\|\ve\|_2/\sqrt{m})$, assuming that the measurement vectors $\va_j$, $j=1,\ldots,m$ are independent complex sub-Gaussian random vectors.
    Furthermore, we have proven that the error bound is sharp.
   However, there are still numerous research questions that warrant further investigation. For instance, it would be worthwhile to explore whether other more general types of sampling matrices, such as partial circulant and random Fourier measurements, also adhere to this restricted isometry property. }}

\vspace{0.1cm}
{\small \bf Funding.}
{\small Yu Xia is supported by the National Nature Science Foundation of China   (12271133, U21A20426,11901143) and the Zhejiang Provincial Natural Science Foundation (LZ23A010002). Zhiqiang Xu is supported by the National Science Fund for Distinguished Young Scholars (12025108) and the National Nature Science Foundation of China  (12021001, 12288201).
}


\end{document}